\newtheorem{theorem}{Theorem}[section]
\theoremstyle{plain}
\newtheorem{corollary}[theorem]{Corollary}
\newtheorem{example}{Example}
\newtheorem{lemma}[theorem]{Lemma}
\newtheorem{proposition}[theorem]{Proposition}
\newtheorem{remark}{Remark}
\numberwithin{equation}{section}
\let\oldtocsection=\tocsection
\let\oldtocsubsection=\tocsubsection
\let\oldtocsubsubsection=\tocsubsubsection
\renewcommand{\tocsection}[2]{\hspace{0em}\oldtocsection{#1}{#2}}
\renewcommand{\tocsubsection}[2]{\hspace{2em}\oldtocsubsection{#1}{#2}}
\renewcommand{\tocsubsubsection}[2]{\hspace{4em}\oldtocsubsubsection{#1}{#2}}
\begin{document}
\title[Weak $\mathcal{Z}$-structures for some classes of groups]{Weak $\mathcal{Z}$-structures for some classes of groups}
\author{Craig R. Guilbault }
\address{Department of Mathematical Sciences, University of Wisconsin-Milwaukee,
Milwaukee, Wisconsin 53201}
\email{craigg@uwm.edu}
\thanks{Work on this project was aided by a Simons Foundation Collaboration Grant.}
\date{July 19, 2013}
\subjclass{Primary 57M10, 20F65; Secondary 57S30 57M07}
\keywords{$\mathcal{Z}$-set, $\mathcal{Z}$-compactification, $\mathcal{Z}$-structure,
$\mathcal{Z}$-boundary, weak $\mathcal{Z}$-structure, weak $\mathcal{Z}%
$-boundary, group extension, approximate fibration}

\begin{abstract}
Motivated by the usefulness of boundaries in the study of $\delta$-hyperbolic
and CAT(0) groups, Bestvina introduced a general axiomatic approach to group
boundaries, with a goal of extending the theory and application of boundaries
to larger classes of groups. The key definition is that of a \textquotedblleft%
$\mathcal{Z}$-structure\textquotedblright\ on a group $G$. These $\mathcal{Z}%
$-structures, along with several variations, have been studied and existence
results obtained for\ a variety of new classes groups. Still, relatively
little is known about the general question of which groups admit any of the
various $\mathcal{Z}$-structures---aside from the (easy) fact that any such
$G$ must have type F, i.e., $G$ must admit a finite K($G,1$). In fact,
Bestvina has asked whether \emph{every }type F group admits a $\mathcal{Z}%
$-structure or at least a \textquotedblleft weak\textquotedblright%
\ $\mathcal{Z}$-structure.

In this paper we prove some general existence theorems for weak\ $\mathcal{Z}%
$-structures. The main results are as follows.\medskip

\noindent\textbf{Theorem A. }\emph{If }$G$ \emph{is an extension of \ a
nontrivial type F group by a nontrivial type F group, then }$G$\emph{ admits a
weak }$\mathcal{Z}$\emph{-structure.\smallskip}

\noindent\textbf{Theorem B.}\emph{ If }$G$\emph{ admits a finite K(}%
$G,1$\emph{) complex }$K$\emph{ such that the }$G$-\emph{action on
}$\widetilde{K}$\emph{ contains }$1\neq j\in G$\emph{ properly homotopic to
}$\operatorname{id}_{\widetilde{K}}$, \emph{then }$G$\emph{ admits a weak
}$\mathcal{Z}$\emph{-structure.\smallskip}

\noindent\noindent\textbf{Theorem C. }\emph{If }$G$\emph{ has type F and is
simply connected at infinity, then }$G$\emph{ admits a weak }$\mathcal{Z}%
$\emph{-structure.}\medskip

As a corollary of Theorem A or B, every type F group admits a weak
$\mathcal{Z}$-structure \textquotedblleft after
stabilization\textquotedblright; more precisely: if $H$ has type F, then
$H\times%
\mathbb{Z}
$ admits a weak $\mathcal{Z}$-structure. As another corollary of Theorem B,
every type F group with a nontrivial center admits a weak $\mathcal{Z}$-structure.

\end{abstract}
\maketitle
\tableofcontents

\section{Introduction\label{Section: Introduction}}

Several lines of investigation in geometric topology and geometric group
theory seek `nice' com\-pact\-ific\-at\-ions of contractible manifolds or
complexes (or ERs/ARs) on which a given group $G$ acts cocompactly as covering
transformations. Bestvina \cite{Be} has defined a $\mathcal{Z}$-structure and
a weak $\mathcal{Z}$-structure on a group $G$ as follows:\smallskip

\noindent A $\mathcal{Z}$\emph{-structure} on a group $G$ is a pair $\left(
\overline{X},Z\right)  $ of spaces satisfying:

\begin{enumerate}
\item $\overline{X}$ is a compact ER,

\item $Z$ is a $\mathcal{Z}$-set\footnote{The definition of $\mathcal{Z}$-set,
along with definitions of numeous other terms used in the introduction, can be
found in \S \ref{Section: Background}.} in $\overline{X}$,

\item $X=\overline{X}-Z$ admits a proper, free, cocompact action by $G$, and

\item (nullity condition) For any open cover $\mathcal{U}$ of $\overline{X}$,
and any compactum $K\subseteq X$, all but finitely many $G$-translates of $K$
lie in some element $U$ of $\mathcal{U}$.
\end{enumerate}

\noindent If only conditions 1)-3) are satisfied, $\left(  \overline
{X},Z\right)  $ is called a \emph{weak}\textbf{ }$\mathcal{Z}$%
\emph{-structure} on $G$.

An additional condition that can be added to conditions 1)-3), with or without
condition 4), is:

\begin{enumerate}
\item[(5)] The action of $G$ on $X$ extends to an action of $G$ on
$\overline{X}$.
\end{enumerate}

\noindent Farrell and Lafont \cite{FL} refer to a pair $\left(  \overline
{X},Z\right)  $ satisfying 1)-5) as an $E\mathcal{Z}$\emph{-structure}. Others
have considered pairs that satisfy 1)-3) and 5); we call those \emph{weak
}$E\mathcal{Z}$\emph{-structures.} Depending on the set of conditions
satisfied, $Z$ is referred to generically as a \emph{boundary} for $G$; or
more specifically as a $\mathcal{Z}$\emph{-boundary}, a \emph{weak
}$\mathcal{Z}$\emph{-boundary,} an $E\mathcal{Z}$\emph{-boundary}, or a
\emph{weak }$E\mathcal{Z}$\emph{-boundary}.

\begin{example}
A torsion-free group acting geometrically on a finite-dimensional CAT(0) space
$X$ admits an $E\mathcal{Z}$-structure---one compactifies $X$ by adding the
visual boundary. Bestvina and Mess \cite{BM} have shown that each torsion-free
word hyperbolic group $G$ admits an $E\mathcal{Z}$-structure $\left(
\overline{X},\partial G\right)  $, where $\partial G$ is the Gromov boundary
and $X$ is a Rips complex for $G$. Osajda and Przytycki have shown that
systolic groups admit $E\mathcal{Z}$-structures. \cite{Be} contains a
discussion of $\mathcal{Z}$-structures and weak $\mathcal{Z}$-structures on a
variety of other groups, not all of which satisfy condition 5).
\end{example}

\begin{remark}
\emph{Some authors (see \cite{Dr}) have extended the above definitions by
allowing non-free }$G$\emph{-actions (thus allowing for groups with torsion)
and by loosening the ER requirement on }$\overline{X}$\emph{ to that of AR,
i.e., allowing }$\overline{X}$\emph{ to be infinite-dimensional. Here we stay
with the original definitions, but note that some analogous results are
possible in the more general settings.}
\end{remark}

A group $G$ has \emph{type F} if it admits a finite K($G,1$) complex. The
following proposition narrows the field of candidates for admitting any sort
of $\mathcal{Z}$-structure to those groups of type F.

\begin{proposition}
If there exists a proper, free, cocompact $G$-action on an AR $Y$, then $G$
has type F.
\end{proposition}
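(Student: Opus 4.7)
The plan is to upgrade the given action on $Y$ into a finite $K(G,1)$ by first producing a compact aspherical ANR as the quotient $Y/G$, and then invoking West's theorem to replace it by a homotopy-equivalent finite CW complex.

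First, I would record the basic consequences of the hypotheses. An AR is contractible, and the hypothesis that $G$ acts properly, freely, and cocompactly by covering transformations means that the projection $Y \twoheadrightarrow Y/G$ is a regular covering map with compact base. Consequently $Y/G$ is a compact space whose universal cover is the contractible AR $Y$, and whose fundamental group is $G$; in other words, $Y/G$ is a compact $K(G,1)$ in the topological sense.

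The next step is to verify that $Y/G$ is itself an ANR. Since $Y$ is an AR it is in particular an ANR, and ANR is a local property; because the covering projection $Y\to Y/G$ is a local homeomorphism, each point of $Y/G$ has a neighborhood homeomorphic to an open subset of $Y$, and is thus an ANR. A compact metrizable space that is locally ANR is an ANR, so $Y/G$ is a compact ANR. (Metrizability of $Y/G$ follows from the properness and freeness of the action together with the metrizability of $Y$, which is part of the standing convention that ARs/ANRs are metric.)

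Having a compact aspherical ANR is enough: by West's theorem (the resolution of Borsuk's conjecture), every compact ANR has the homotopy type of a finite CW complex $K$. Since $K \simeq Y/G$ and $Y/G$ is a $K(G,1)$, the complex $K$ is itself a finite $K(G,1)$, so $G$ has type F.

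I do not expect any substantial obstacle here; the only step that requires mild care is establishing that $Y/G$ is an ANR rather than merely a compact metric space with contractible universal cover, and this is handled by the fact that ANR is a local property preserved under the local homeomorphism $Y\to Y/G$. The substantive input is West's theorem, which converts the point-set information (compact ANR) into the CW-combinatorial finiteness required by the definition of type F.
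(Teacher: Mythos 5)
Your proof is correct and follows essentially the same route as the paper: pass to the quotient $G\backslash Y$, observe that it is a compact aspherical ANR because the covering projection is a local homeomorphism, and apply West's theorem to obtain a finite complex which is then a K($G,1$). The extra care you take with metrizability and the ``locally ANR implies ANR'' step is fine but not a departure from the paper's argument.
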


\begin{proof}
The quotient $q:Y\rightarrow G\backslash Y$ is a covering projection, so
$G\backslash Y$ is aspherical and locally homeomorphic to $Y$. By the latter,
$G\backslash Y$ is a compact ANR, and thus (by Theorem
\ref{Theorem: West's Theorem}) homotopy equivalent to a finite complex. Any
such complex is a K($G,1$).
\end{proof}

In \cite{Be}, Bestvina asked the following pair of questions:\medskip

\noindent\textbf{Bestvina's Question. }\emph{Does every type F group admit a
}$\mathcal{Z}$\emph{-structure?\medskip}

\noindent\textbf{Weak Bestvina Question. }\emph{Does every type F group admit
a weak }$\mathcal{Z}$\emph{-structure?\medskip}

The Weak Bestvina Question was also posed by Geoghegan in \cite[p.425]{Ge2}.
Farrell and Lafont \cite{FL} have asked whether every type F group admits an
$E\mathcal{Z}$-structure, and the question of which groups admit weak
$E\mathcal{Z}$-structures appear in both \cite{BM} and \cite{Ge2}. Although
interesting special cases abound, a general solution to any of these questions
seems out of reach at this time.

As one would expect, the more conditions a $\mathcal{Z}$-structure or its
corresponding boundary satisfies, the greater the potential applications. For
example, Bestvina has shown that the topological dimension of a $\mathcal{Z}%
$-boundary is an invariant of the group---it is one less than the
cohomological dimension of $G$; this is not true for weak $\mathcal{Z}%
$-boundaries. But a weak $\mathcal{Z}$\emph{-}boundary carries significant
information about $G$. For example, the \v{C}ech cohomology of a weak
$\mathcal{Z}$\emph{-}boundary reveals the group cohomology of $G$ with $%
\mathbb{Z}
G$-coefficients, and the $\operatorname*{pro}$-homotopy groups of a weak
$\mathcal{Z}$-boundary are directly related to the corresponding end
invariants (such as simple connectivity at infinity) of $G$. A weak
$\mathcal{Z}$\emph{-}boundary, when it exists, is well-defined up to shape and
can provide a first step toward obtaining a stronger variety of $\mathcal{Z}%
$-structure on $G$. $E\mathcal{Z}$\emph{-} and weak\emph{ }$E\mathcal{Z}%
$-boundaries, when they exist, carry the potential for studying $G$ by
analyzing its action on the compactum Z. More about these topics can be found
in \cite{Ge1}, \cite{Ge2}, \cite{GM1}, \cite{Be}, \cite{FL} and \cite{Gu3}.

In this paper we prove the existence of weak $\mathcal{Z}$-structures for a
variety of groups. A notable special case provides a \textquotedblleft
stabilized solution\textquotedblright\ to the Weak Bestvina Question. It
asserts that, if $H$ has type F, then $H\times%
\mathbb{Z}
$ admits a weak $\mathcal{Z}$-structure. That result is an easy consequence of
either of the following more general theorems, to be proven here.

\begin{theorem}
\label{Theorem 1}If $G$ is an extension of a nontrivial type F group by a
nontrivial type F group, that is, if there exists a short exact sequence
$1\rightarrow N\rightarrow G\rightarrow Q\rightarrow1$ where $N$ and $Q$ are
nontrivial and type F, then $G$ admits a weak $\mathcal{Z}$-structure. More
generally, if a type F group $G$ is virtually an extension of a nontrivial
type F group by a nontrivial type F group, then $G$ admits a weak
$\mathcal{Z}$-structure.
\end{theorem}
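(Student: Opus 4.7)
The plan is to apply Theorem B, with the key task being the production of a nontrivial $j \in G$ properly homotopic to $\operatorname{id}_{\widetilde{K}}$ for a carefully chosen finite $K(G,1)$ complex $K$. Choose finite classifying spaces $L = K(N,1)$ and $M = K(Q,1)$, and realize the extension $1 \to N \to G \to Q \to 1$ geometrically as a fiber bundle $L \to K \to M$ whose monodromy is the abstract kernel $Q \to \operatorname{Out}(N)$ extracted from the extension. The total space $K$ is aspherical with $\pi_1(K) = G$ and is finite since $L$ and $M$ are, so $K$ is a finite $K(G,1)$.

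Passing to universal covers yields a fiber bundle $\widetilde{L} \to \widetilde{K} \to \widetilde{M}$ with contractible fiber and contractible base. The deck action respects this fibration: $N \trianglelefteq G$ acts fiberwise (as the deck action on each copy of $\widetilde{L}$) while $Q = G/N$ acts on the base $\widetilde{M}$. Because $\widetilde{M}$ is contractible, the bundle is fiber-trivializable, giving a fiber-preserving homeomorphism $\widetilde{K} \cong \widetilde{M} \times \widetilde{L}$.

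Pick any nontrivial $j \in N$. In the trivialization the deck transformation $L_j$ has the form $(\widetilde m, x) \mapsto (\widetilde m, \phi_{\widetilde m}(x))$, where the $\phi_{\widetilde m}$ form a continuous family of self-homeomorphisms of $\widetilde{L}$ each conjugate to the deck transformation by $j$ on a single fiber. I would construct a continuous family of fiber homotopies $\psi^{\widetilde m}_s$ from $\phi_{\widetilde m}$ to $\operatorname{id}_{\widetilde{L}}$: since $\widetilde{M}$ is contractible, the parametrized obstructions to such a family vanish, and approximate fibration techniques (as flagged by the paper's keywords) can be used to keep the assembly well controlled. The resulting map $H_s(\widetilde m, x) = (\widetilde m, \psi^{\widetilde m}_s(x))$ is a fiber-preserving homotopy from $L_j$ to $\operatorname{id}_{\widetilde{K}}$, and its properness follows from uniform control of the family over compact subsets of $\widetilde{M}$. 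Theorem B then produces the weak $\mathcal{Z}$-structure.

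For the virtual version, suppose $G$ has type F and contains a finite-index subgroup $G_0$ that is an extension of nontrivial type F groups. Apply the first part to $G_0$ to obtain a weak $\mathcal{Z}$-structure $(\overline X, Z)$. The space $X = \overline X - Z$ is the universal cover of a finite $K(G_0,1)$, which is also the universal cover of a finite $K(G,1)$; so $G$ itself acts properly, freely, and cocompactly on $X$, and $(\overline X, Z)$ serves as a weak $\mathcal{Z}$-structure for $G$ as well. The genuine obstacle throughout lies in the construction of the proper homotopy in the preceding paragraph: the fiberwise homotopies must be assembled coherently enough that global properness is preserved, and this is precisely the point at which contractibility of $\widetilde{M}$ combined with the fibration/approximate-fibration structure must do the essential work.
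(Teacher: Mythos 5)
There is a genuine gap, and it sits exactly where you flagged it: the assembly of the fiberwise homotopies into a \emph{proper} homotopy from $L_j$ to $\operatorname{id}_{\widetilde{K}}$. A fiber-preserving homotopy $H_s(\widetilde m,x)=(\widetilde m,\psi^{\widetilde m}_s(x))$ is proper only if each individual slice $\psi^{\widetilde m}:\widetilde L\times[0,1]\to\widetilde L$ is proper (test properness on a compactum $\{\widetilde m\}\times B$), and $\psi^{\widetilde m}$ is a homotopy from $\phi_{\widetilde m}$ --- a conjugate of the deck transformation by $j$ acting on $\widetilde L$ --- to $\operatorname{id}_{\widetilde L}$. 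So your construction secretly presupposes that $j$ already satisfies the hypothesis of Theorem \ref{Theorem 2} \emph{for $N$ acting on $\widetilde L$}; contractibility of $\widetilde M$ kills the obstruction to a homotopy, but properness fails in the fiber direction, where compactness in $\widetilde M$ gives no control. Worse, the statement you are trying to prove is false in general: take $G=F_2\times F_2$, $N=F_2\times 1$, $j=(a,1)$, so $\widetilde K=T\times T$ and $L_j=L_a\times\operatorname{id}$. The rectangular neighborhoods of infinity $R_n=(U_n\times T)\cup(T\times U_n)$ have free $\pi_1$ (and $H_1$) on generators indexed by pairs of unbounded components of $U_n$ in the two factors, and since $a$ moves most ends of $T$ off of their depth-$n$ components, $L_a\times\operatorname{id}$ permutes these generators nontrivially at every stage. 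Hence it induces a pro-morphism on $\operatorname{pro}$-$\pi_1(\varepsilon(T\times T))$ different from that of the identity (even after basepoint conjugations), so no fiberwise nontrivial $j$ is properly homotopic to $\operatorname{id}_{\widetilde K}$, and Theorem \ref{Theorem 2} simply cannot be the engine for Theorem \ref{Theorem 1}.

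The paper instead deduces Theorem \ref{Theorem 1} from Theorem \ref{Theorem 3} together with Lemma \ref{Lemma: properties of classifying spaces for group extensions}: the Borel construction and the Rebuilding Lemma yield a finite $K(G,1)$ complex $W'$ with $\widetilde W'$ proper homotopy equivalent to $\widetilde Y\times\widetilde Z$. The approximate-fibration machinery you invoke is indeed the right tool, but it is used to prove \emph{this} proper homotopy equivalence (a stack over a contractible base, with aspherical fibers including as $\pi_1$-isomorphisms, is an approximate fibration, hence proper homotopy equivalent to a product); crucially, that equivalence is neither fiber-preserving nor equivariant, which is why it is attainable where your homotopy is not. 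The product $\widetilde Y\times\widetilde Z$ is then handled directly via inward tameness, pro-(finitely generated free) fundamental group at infinity, Bass--Heller--Swan, and Chapman--Siebenmann. Your treatment of the virtual case is essentially right, though you should either choose the finite $K(G_0,1)$ to be a finite cover of a finite $K(G,1)$, or invoke proper homotopy invariance of the relevant conditions to transfer the $\mathcal Z$-compactification between the two models of $\widetilde{K(G_0,1)}$.
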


\begin{theorem}
\label{Theorem 2}Suppose $G$ admits a finite K($G,1$) complex $K$, and the
corresponding $G$-action on the universal cover $\widetilde{K}$ contains a
$1\neq j\in G$ that is properly homotopic to $\operatorname{id}_{\widetilde
{K}}$. Then $G$ admits a weak $\mathcal{Z}$-structure.
\end{theorem}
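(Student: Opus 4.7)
\noindent The plan is to use the proper homotopy $H\colon \widetilde{K}\times I\to\widetilde{K}$ between $j$ and $\operatorname{id}_{\widetilde{K}}$ to construct a compactification of (a $G$-space proper-homotopy-equivalent to) $\widetilde{K}$. Normalize so that $H_{0}=j$ and $H_{1}=\operatorname{id}_{\widetilde{K}}$; the key feature of $H$ is that iterating it together with powers of $j$ yields proper homotopies from every $j^{n}$ to $\operatorname{id}_{\widetilde{K}}$, giving a controlled ``flow'' of $\widetilde{K}$ along the $\langle j\rangle$-orbits out to infinity.

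\medskip

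\noindent The construction I envision proceeds in three stages. First, produce an auxiliary $G$-space $Y$ that is proper-homotopy-equivalent to $\widetilde{K}$ and carries a natural ``direction at infinity'' along which to compactify. A first candidate is the infinite mapping telescope of $j$ with interpolations supplied by $H$, but its gluings are by $j$ and therefore only preserve the centralizer $C_{G}(j)$; a more promising candidate is a $G$-equivariant thickening of $\widetilde{K}$ that uses $H$ only locally, or a ``fibered'' model $\widetilde{K}\times\mathbb{R}$ in which $G$ acts on the first factor and $H$ is used to provide a $G$-equivariant collapse back onto $\widetilde{K}$. Second, compactify $Y$ to $\overline{Y}=Y\cup Z$ by adding an ideal boundary $Z$ corresponding to the limit of iterates of $H$; the compact ER property of $\overline{Y}$ should follow from West's theorem together with the locally finite ANR structure of $\widetilde{K}$, and the construction should be engineered so that $Z$ is $\mathcal{Z}$-embedded. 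Third, use iterates of $H$ to produce near-identity homotopies of $\overline{Y}$ whose images lie in $Y$, thereby verifying the $\mathcal{Z}$-set condition on $Z$, and confirm that $G$ still acts properly, freely, and cocompactly on $Y$.

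\medskip

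\noindent The hardest step will be the second: engineering a compactification that is simultaneously a compact ER, supports the full $G$-action (not merely the action of $C_{G}(j)$), and has $Z$ as a $\mathcal{Z}$-set. The proper homotopy $H$ supplies the natural source of near-identity deformations one needs for the $\mathcal{Z}$-set verification, but a literal mapping telescope of $j$ destroys $G$-equivariance, while a product model $\widetilde{K}\times\mathbb{R}$ initially yields only a weak $\mathcal{Z}$-structure on $G\times\mathbb{Z}$ (available in any case from \thmref{Theorem 1}) and must be ``collapsed'' back to $G$. The appearance of \emph{approximate fibration} among the paper's keywords strongly suggests the right framework: $H$ should be used to endow $\widetilde{K}$ with an approximate fibration structure whose fibers are controlled enough at infinity to produce a $G$-equivariant compactification. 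Carrying this out in detail, via Chapman--Siebenmann-type machinery to guarantee the ER and $\mathcal{Z}$-set properties, will constitute the technical core of the proof.
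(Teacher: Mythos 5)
Your plan stalls exactly where you say it does, and the obstacle you identify is not the real one. For a \emph{weak} $\mathcal{Z}$-structure only conditions 1)--3) are required: the compactification $\overline{X}=X\sqcup Z$ need not be $G$-equivariant, since the $G$-action is only required on the open part $X=\overline{X}-Z$, which already carries it. So your "hardest step" --- engineering a compactification that "supports the full $G$-action, not merely $C_G(j)$" --- is aimed at a weak $E\mathcal{Z}$-structure, a strictly stronger (and here unnecessary) goal, while the genuinely hard step, actually producing \emph{any} $\mathcal{Z}$-compactification, is left unresolved. The appeal to approximate fibrations is also misplaced: in the paper that machinery serves the group-extension result (Theorem~\ref{Theorem 1}, via Lemma~\ref{Lemma: properties of classifying spaces for group extensions}), not Theorem~\ref{Theorem 2}.

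The paper's route is quite different and entirely non-equivariant. Since $G$ has type F it is torsion-free, so $\langle j\rangle\cong\mathbb{Z}$ acts properly on $\widetilde{K}$, and Theorem~\ref{Theorem 2} reduces to Theorem~\ref{Theorem 4}. The engine is the homeomorphism $(\langle j\rangle\backslash\widetilde{K})\times\mathbb{R}\approx\operatorname{Tor}_j(\widetilde{K})$ (Lemma~\ref{Lemma: mapping torus/Z-action}) combined with $j\overset{p}{\simeq}\operatorname{id}$, which gives $(\langle j\rangle\backslash\widetilde{K})\times\mathbb{R}\overset{p}{\simeq}\widetilde{K}\times\mathbb{S}^1$. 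Mather's trick then shows $\langle j\rangle\backslash\widetilde{K}$ has finite homotopy type and $\widetilde{K}$ is inward tame (Lemma~\ref{Lemma: inward tameness of spaces admitting nice Z-actions}), and \cite{GM2} identifies $\operatorname{pro}$-$\pi_1$ at infinity as pro-(finitely generated free) or trivial (Lemma~\ref{Lemma: pro-pi1 of spaces admitting Z-actions}). Bass--Heller--Swan kills the $\widetilde{K}_0$ and $\operatorname{Wh}$ obstructions, Chapman--Siebenmann $\mathcal{Z}$-compactifies $\widetilde{K}\times\mathcal{Q}$, and Ferry's theorem restores finite dimension, yielding a $\mathcal{Z}$-compactification of the ER $\widetilde{K}\times\mathbb{I}^{2\cdot\dim K+5}$, on which $G$ acts as required in the interior. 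None of this requires building an ideal boundary by hand from iterates of $H$; to complete your approach you would need to supply, at minimum, the inward tameness and vanishing-obstruction arguments that your outline omits.
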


\begin{remark}
\emph{For finite K(}$G,1$\emph{) complexes} $K$ \emph{and} $L$\emph{, or more
generally, compact aspherical ANRs with }$\pi_{1}\left(  K\right)  \cong
G\cong\pi_{1}\left(  L\right)  $\emph{, there is a }$G$\emph{-equivariant
proper homotopy equivalence }$\widetilde{f}:\widetilde{K}\rightarrow
\widetilde{L}$\emph{. If }$j\in G$\emph{ satisfies the hypothesis of Theorem
\ref{Theorem 2}, then so does }$\widetilde{f}\circ j$\emph{. Hence, the
existence of such a }$j$\emph{ can be viewed as a property of }$G$\emph{,
itself.}

\begin{example}
For a closed, orientable, aspherical $n$-manifold $M^{n}$ with $\widetilde
{M}^{n}\cong%
\mathbb{R}
^{n}$ (e.g., $M^{n}$ a Riemannian manifold of nonpositive sectional curvature)
every element of $\pi_{1}\left(  M^{n}\right)  $ satisfies the hypothesis of
Theorem \ref{Theorem 1}. On the other hand, for finitely generated free
groups, no elements do. Of course, weak $\mathcal{Z}$-structures for both of
these classes of groups are known for other reasons.
\end{example}
\end{remark}

\begin{corollary}
\label{Corollary: Stable weak Z-structures}If $H$ is type F, then $H\times%
\mathbb{Z}
$ admits a weak $\mathcal{Z}$-structure.
\end{corollary}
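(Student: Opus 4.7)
The plan is to deduce the corollary directly from Theorem \ref{Theorem 2}, using the fact that the $\mathbb{Z}$-factor in $H\times\mathbb{Z}$ contributes an element whose action on the universal cover is translation along a coordinate line, and translations along $\mathbb{R}$ are properly homotopic to the identity.

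Concretely, pick a finite K($H,1$) complex $L$ (which exists since $H$ has type F) and form $K = L\times S^{1}$. This is a finite aspherical complex with $\pi_{1}(K)\cong H\times\mathbb{Z}$, hence a finite K($G,1$) for $G=H\times\mathbb{Z}$. The universal cover factors as $\widetilde{K}=\widetilde{L}\times\mathbb{R}$, and $G$ acts diagonally by deck transformations: $H$ on $\widetilde{L}$ and $\mathbb{Z}$ on $\mathbb{R}$ by integer translation. Let $j=(1_{H},1)\in G$, a nontrivial element whose action on $\widetilde{K}$ is $(x,t)\mapsto(x,t+1)$.

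The next step is to exhibit a proper homotopy from $\operatorname{id}_{\widetilde{K}}$ to $j$. Define $F:\widetilde{K}\times[0,1]\to\widetilde{K}$ by $F((x,t),s)=(x,t+s)$; clearly $F_{0}=\operatorname{id}_{\widetilde{K}}$ and $F_{1}=j$. For properness, given a compact set $C\subseteq\widetilde{K}$, choose compact $A\subseteq\widetilde{L}$ and an interval $[a,b]\subseteq\mathbb{R}$ with $C\subseteq A\times[a,b]$. Then $F^{-1}(C)\subseteq A\times[a-1,b]\times[0,1]$, which is compact. Thus $j$ is properly homotopic to the identity, and Theorem \ref{Theorem 2} yields a weak $\mathcal{Z}$-structure on $G$, as required.

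There is really no main obstacle here; the work was already done in proving Theorem \ref{Theorem 2}. I note in passing that when $H$ is nontrivial one can alternatively invoke Theorem \ref{Theorem 1}, applied to the split short exact sequence $1\to H\to H\times\mathbb{Z}\to\mathbb{Z}\to 1$ (both $H$ and $\mathbb{Z}$ being nontrivial and of type F). The argument above via Theorem \ref{Theorem 2} has the small advantage of handling the case $H=1$ uniformly, where it simply recovers the standard two-point compactification of $\mathbb{R}$ as a weak $\mathcal{Z}$-structure for $\mathbb{Z}$.
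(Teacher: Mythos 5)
Your argument is correct and is essentially the paper's own proof: the paper likewise notes the corollary follows from Theorem \ref{Theorem 1} and, alternatively, from Theorem \ref{Theorem 2} by letting $H\times\mathbb{Z}$ act diagonally on $\widetilde{L}\times\mathbb{R}$ with the nontrivial elements of the $\mathbb{Z}$-factor (acting by translation) properly homotopic to the identity. Your explicit verification of properness of the translation homotopy, and your remark that the Theorem \ref{Theorem 1} route requires $H$ nontrivial, are accurate refinements of the same argument.
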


\begin{proof}
This corollary is immediate from Theorem \ref{Theorem 1}. Alternatively, it
may be obtained from Theorem \ref{Theorem 2}. Let $K$ be a finite K($H,1$) and
$H\times%
\mathbb{Z}
$ act diagonally on $\widetilde{K}\times%
\mathbb{R}
$. The nontrivial elements of $%
\mathbb{Z}
$ satisfy the hypotheses of that theorem.
\end{proof}

A more general application of Theorem \ref{Theorem 2} is the following.

\begin{corollary}
If $G$ is a type F group with a nontrivial center, then $G$ admits a weak
$\mathcal{Z}$-structure.
\end{corollary}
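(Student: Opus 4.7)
The plan is to derive this from Theorem \ref{Theorem 2}. Fix a finite K($G,1$)-complex $K$ and a nontrivial central element $z\in Z(G)$. The task reduces to showing that the deck transformation of $\widetilde{K}$ corresponding to $z$ is properly homotopic to $\operatorname{id}_{\widetilde{K}}$; Theorem \ref{Theorem 2} then yields the weak $\mathcal{Z}$-structure at once.

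The first step is to construct a homotopy $H\colon K\times I\to K$ from $\operatorname{id}_{K}$ to itself whose tracks realize $z$. Centrality of $z$ is precisely what is needed in order for the rule $(g,n)\mapsto g\cdot z^{n}$ to define a homomorphism $\varphi\colon G\times\mathbb{Z}\to G$. Since $K\times S^{1}$ is a K($G\times\mathbb{Z},1$) and $K$ is aspherical, $\varphi$ is realized by a map $f\colon K\times S^{1}\to K$. After a homotopy one arranges that $f(\,\cdot\,,*)=\operatorname{id}_{K}$, and unrolling $S^{1}=I/\partial I$ produces $H$, with each track $t\mapsto H(x,t)$ a loop at $x$ representing $z\in\pi_{1}(K,x)\cong G$.

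Now lift $H$ to $\widetilde{H}\colon\widetilde{K}\times I\to\widetilde{K}$ with $\widetilde{H}_{0}=\operatorname{id}_{\widetilde{K}}$. Each track of $\widetilde{H}$ is a lift of a loop representing $z$, so the track starting at $\widetilde{x}$ terminates at $z\cdot\widetilde{x}$; thus $\widetilde{H}_{1}$ equals the deck transformation by $z$, which is nontrivial since $G$ acts freely on $\widetilde{K}$. It remains to verify that $\widetilde{H}$ is proper. Equip $\widetilde{K}$ with a proper $G$-invariant metric. Uniform continuity of $H$ on the compact space $K\times I$ together with local evenness of $\widetilde{K}\to K$ shows that the tracks of $\widetilde{H}$ have uniformly bounded diameter. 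Hence $\widetilde{H}(\widetilde{x},t)$ remains within a fixed bounded distance of $\widetilde{x}$, which forces preimages of compacta under $\widetilde{H}$ to be bounded, hence compact. I expect this properness verification to be the only technical point of the argument; the existence of $H$ is a formal consequence of centrality of $z$ and the K($G,1$)-property of $K$, and the application of Theorem \ref{Theorem 2} is then immediate.
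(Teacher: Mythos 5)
Your proposal is correct, and it follows the same overall strategy as the paper: reduce to Theorem~\ref{Theorem 2} by showing that a nontrivial central element $z$ acts on $\widetilde{K}$ by a deck transformation properly homotopic to $\operatorname{id}_{\widetilde{K}}$. Where you differ is in how you produce the required cyclic homotopy of $\operatorname{id}_{K}$ with tracks in the class of $z$ (this is Gottlieb's theorem that $Z(G)$ lies in the evaluation subgroup, which the paper cites as \cite[Th.II.7]{Go}). The paper's sketch attaches a whisker $A$ to $K$, defines a based self-map $f_{j}$ of $K^{\ast}=K\cup A$ inducing conjugation by $j$ on $\pi_{1}$, uses centrality to homotope $f_{j}$ to $\operatorname{id}_{K^{\ast}}$ rel basepoint, lifts, and collapses the whisker. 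You instead observe that centrality makes $(g,n)\mapsto gz^{n}$ a homomorphism $G\times\mathbb{Z}\to G$, realize it by a map $K\times S^{1}\to K$ using asphericity, normalize on $K\times\{\ast\}$, and unroll the circle. Both constructions are standard; yours avoids the auxiliary complex and the collapsing step, at the cost of a homotopy-extension normalization. You also make explicit the properness of the lifted homotopy (uniformly bounded track diameters in a proper $G$-invariant metric), a point the paper compresses into the phrase ``Since $K^{\ast}$ is compact, that homotopy lifts to a proper homotopy''; spelling this out is a genuine improvement in completeness, since it is the only place where properness, rather than ordinary homotopy theory, enters.
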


\begin{proof}
The point here is that, when $K$ is a finte K($G$,1), each nontrivial $j\in
Z\left(  G\right)  $ satisfies the hypothesis of Theorem \ref{Theorem 2}. A
complete proof of that fact can be deduced from \cite[Th.II.7]{Go}. We sketch
an alternative argument.

Let $K^{\ast}=K\cup A$ where $A$ is an arc with terminal point identified to a
vertex $p$ of $K$; let $p^{\ast}$ be the initial point of $A$. For arbitrary
$j\in\pi_{1}\left(  K,p\right)  $ define $f_{j}:\left(  K^{\ast},p^{\ast
}\right)  \rightarrow\left(  K^{\ast},p^{\ast}\right)  $ to be the identity on
$K$; to stretch the initial half of $A$ onto the image copy of $A$; and to
send the latter half of $A$ around a loop corresponding to $j$. The induced
homomorphism on $\pi_{1}\left(  K^{\ast},p^{\ast}\right)  $ is conjugation by
$j$. If $j\in Z\left(  G\right)  $ that homomorphism is the identity, so
$f_{j}$ is homotopic (rel $p^{\ast}$) to $\operatorname*{id}_{K^{\ast}}$.
Since $K^{\ast}$ is compact, that homotopy lifts to a proper homotopy
$\widetilde{F}:\widetilde{K^{\ast}}\times\left[  0,1\right]  \rightarrow
\widetilde{K^{\ast}}$. Collapse out the preimage of $A\times\left[
0,1\right]  $ in the domain and the preimage of $A$ in the range to get a
proper homotopy between $\operatorname*{id}_{\widetilde{K}}$ and the covering
translation corresponding to $j$.
\end{proof}

Theorems \ref{Theorem 1} and \ref{Theorem 2} will be obtained from a pair of
more general results, with hypotheses more topological than group-theoretic.

\begin{theorem}
\label{Theorem 3}Suppose $G$ admits a finite K($G,1$) complex $K$ with the
property that $\widetilde{K}$ is proper homotopy equivalent to a product
$X\times Y$ of noncompact ANRs, then $G$ admits a weak $\mathcal{Z}$-structure.
\end{theorem}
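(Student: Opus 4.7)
The strategy is to construct a $\mathcal{Z}$-compactification $(\overline{\widetilde{K}}, Z)$ of $\widetilde{K}$ itself. Once this is in hand, the pair qualifies as a weak $\mathcal{Z}$-structure on $G$: the group acts properly, freely, and cocompactly on $\widetilde{K}$ by covering transformations, and the definition of a weak $\mathcal{Z}$-structure places no requirement on extending the action over the boundary.

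The argument breaks into two parts. First, one constructs a $\mathcal{Z}$-compactification of the product $X \times Y$ itself. Because $\widetilde{K}$ is contractible and proper homotopy equivalent to $X \times Y$, both factors $X$ and $Y$ are contractible noncompact locally compact ANRs. The critical fact to exploit is that the end of a product of two such spaces is controlled by the ends of the factors through a join-like construction, which is much tamer than either factor individually: concretely, choosing compact exhaustions $C_n \subset X$ and $D_n \subset Y$ and considering the complements $(X \times Y) \setminus (C_n \times D_n)$, one obtains a tower of nested open sets whose shape-theoretic inverse limit is a finite-dimensional compact metrizable space. This candidate boundary $Z'$ is then adjoined to $X \times Y$, and the resulting compact space $\overline{X \times Y} = (X \times Y) \cup Z'$ is verified to be a compact ER in which $Z'$ sits as a $\mathcal{Z}$-set.

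Second, one transfers the compactification from $X \times Y$ to $\widetilde{K}$ using the proper homotopy equivalence $f \colon \widetilde{K} \to X \times Y$. The most direct route is to topologize $\widetilde{K} \cup Z'$ by declaring a net $x_\alpha$ in $\widetilde{K}$ to converge to $z \in Z'$ precisely when $f(x_\alpha) \to z$ in $\overline{X \times Y}$; properness of $f$ produces compactness, while a proper homotopy inverse of $f$, together with the finiteness of $K$ (which rules out any Wall-type finiteness obstruction), yields the ER and $\mathcal{Z}$-set conditions on the new pair.

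The main obstacle is the first part---building the $\mathcal{Z}$-compactification of $X \times Y$ and verifying the ER and $\mathcal{Z}$-set conditions on the added boundary. The essential leverage is the product hypothesis itself, which smooths out potentially pathological end behavior of the individual factors: neither $X$ nor $Y$ need be $\mathcal{Z}$-compactifiable on its own, and it is precisely the interaction of the two ends, captured shape-theoretically by the join construction, that makes the product compactification possible.
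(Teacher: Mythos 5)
There is a genuine gap --- in fact two. First, your transfer step fails: $\mathcal{Z}$-compactifiability is \emph{not} a proper homotopy invariant, so you cannot push a $\mathcal{Z}$-compactification of $X\times Y$ across the proper homotopy equivalence $f\colon\widetilde{K}\to X\times Y$ by declaring $x_\alpha\to z$ whenever $f(x_\alpha)\to z$. That recipe need not produce an ANR compactification at all, and the phenomenon is not hypothetical: \cite{Gu1} exhibits a locally finite $2$-dimensional polyhedron that is not $\mathcal{Z}$-compactifiable even though its product with the Hilbert cube (a proper-homotopy-equivalent space) is. For the same reason, your stated goal of compactifying $\widetilde{K}$ itself is stronger than what is known to be achievable; the correct target is a \emph{stabilization} of $\widetilde{K}$. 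Second, your construction of the compactification of $X\times Y$ --- adjoining the shape-theoretic inverse limit of the complements $(X\times Y)\setminus(C_n\times D_n)$ --- does not address the actual obstructions. Gluing an inverse limit onto the end of a space gives a $\mathcal{Z}$-set boundary only under strong hypotheses: one must verify inward tameness (every clean neighborhood of infinity is finitely dominated), the vanishing of the Wall finiteness obstructions in $\varprojlim\widetilde{K}_0(\mathbb{Z}\pi_1(N_i))$, and the vanishing of the torsion obstruction in $\varprojlim^1\operatorname{Wh}(\pi_1(N_i))$. None of these is established by observing that the tower of ends has a compact, finite-dimensional limit, and the boundary of a $\mathcal{Z}$-compactification of a product of two contractible spaces need not be finite-dimensional in any case.

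The route that works is to exploit the product structure quantitatively rather than to build a boundary by hand. Since $\widetilde{K}$ is contractible, so are $X$ and $Y$; hence each is movably finitely dominated, and an explicit interleaving homotopy shows $X\times Y$ is inward tame (Lemma \ref{Lemma: inward tameness of products}). A van Kampen computation on the rectangular neighborhoods of infinity $R=(U\times Y)\cup(X\times V)$ shows $\operatorname{pro}$-$\pi_1$ at infinity is pro-(finitely generated free) (Lemma \ref{Lemma: products that are fg free at infinity}). These properties \emph{are} proper homotopy invariants, so they pass to $\widetilde{K}$. One then works with the Hilbert cube manifold $\widetilde{K}\times\mathcal{Q}$, where the Chapman--Siebenmann theorem gives a complete characterization; Bass--Heller--Swan kills the $\widetilde{K}_0$ and $\operatorname{Wh}$ obstructions because the relevant fundamental groups are finitely generated free. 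Finally, Ferry's stabilization theorem converts the $\mathcal{Z}$-compactification of $\widetilde{K}\times\mathcal{Q}$ into one of the ER $\widetilde{K}\times\mathbb{I}^{2\cdot\dim K+5}$, on which $G$ still acts properly, freely, and cocompactly; that pair is the desired weak $\mathcal{Z}$-structure.
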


\begin{theorem}
\label{Theorem 4}Suppose $G$ admits a finite K($G,1$) complex $K$ for which
$\widetilde{K}$ is proper homotopy equivalent to an ANR $X$ that admits a
proper $%
\mathbb{Z}
$-action generated by a homeomorphism $h:X\rightarrow X$ that is properly
homotopic to $\operatorname*{id}_{X}$. Then $G$ admits a weak $\mathcal{Z}$-structure.
\end{theorem}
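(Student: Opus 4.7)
The plan is to reduce Theorem~\ref{Theorem 4} to Theorem~\ref{Theorem 3}. The strategy is to use the proper homotopy $h \simeq_{\mathrm{proper}} \mathrm{id}_X$ together with the $\mathbb{Z}$-action to manufacture, from the single ANR $X$, a product structure (up to proper homotopy equivalence) involving a noncompact $\mathbb{R}$-factor, and then invoke Theorem~\ref{Theorem 3}.

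First I would form the auxiliary complex $K^{\ast} = K \times S^{1}$, a finite $K(G \times \mathbb{Z}, 1)$ complex whose universal cover is $\widetilde{K^{\ast}} = \widetilde{K} \times \mathbb{R}$. The given proper homotopy equivalence $\widetilde{K} \simeq_{\mathrm{proper}} X$ gives $\widetilde{K^{\ast}} \simeq_{\mathrm{proper}} X \times \mathbb{R}$, a product of two noncompact ANRs ($X$ is noncompact because nontrivial type F groups are infinite, so $\widetilde{K}$ is noncompact). By Theorem~\ref{Theorem 3}, this yields a weak $\mathcal{Z}$-structure on $G \times \mathbb{Z}$, i.e., a compactification $(\overline{W}, Z)$ of a $(G \times \mathbb{Z})$-space $W$ proper homotopy equivalent to $X \times \mathbb{R}$.

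Next I would descend from the weak $\mathcal{Z}$-structure on $G \times \mathbb{Z}$ to one on $G$ using the hypothesis on $h$. The key point is that the $\mathbb{Z}$-action on $X$ is generated by a homeomorphism properly homotopic to $\mathrm{id}_X$; transported through the proper homotopy equivalence, this means that the $\mathbb{Z}$ factor of $G \times \mathbb{Z}$ acts on $\widetilde{K} \times \mathbb{R}$ via a proper homotopy equivalence that is properly null-homotopic in an appropriate sense. Concretely, using the homotopy $H \colon X \times [0,1] \to X$ between $\mathrm{id}_X$ and $h$ (adjusted to be $\mathbb{Z}$-equivariant via the contractibility of $X$), I would exhibit the $\mathbb{Z}$-direction of the compactification as collapsible onto a $G$-invariant $\mathcal{Z}$-set slice of $\overline{W}$, producing the desired compact ER for $G$ with $G$-action on its complement proper, free, and cocompact.

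The main obstacle is this last descent step. In general, a weak $\mathcal{Z}$-structure on $G \times \mathbb{Z}$ does not restrict to one on $G$, since naively slicing $W = \widetilde{K} \times \mathbb{R}$ at $\widetilde{K} \times \{0\}$ destroys cocompactness. Resolving this requires using the specific product form of the compactification produced by Theorem~\ref{Theorem 3} together with the properly-null-homotopic nature of $h$; this is where one likely uses an approximate fibration argument (consonant with the paper's keywords) to show that an appropriate $G$-invariant subcomplex of $\overline{W}$, obtained by collapsing the proper-null-homotopic $\mathbb{Z}$-direction, remains a compact ER with $\mathcal{Z}$-set boundary.
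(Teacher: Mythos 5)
There is a genuine gap, and it sits exactly where you flagged it: the ``descent'' from a weak $\mathcal{Z}$-structure on $G\times\mathbb{Z}$ to one on $G$. Notice that your first step never actually uses the hypothesis on $h$: for \emph{any} type F group $G$, the complex $K\times S^{1}$ is a finite K($G\times\mathbb{Z},1$) whose universal cover is $\widetilde{K}\times\mathbb{R}$, and the generator of the $\mathbb{Z}$-factor (translation in the $\mathbb{R}$-direction) is already properly homotopic to the identity. This is precisely how the paper derives Corollary \ref{Corollary: Stable weak Z-structures}. So the ``properly null-homotopic $\mathbb{Z}$-direction'' you propose to collapse is present for every type F group, and if your descent step worked it would resolve the Weak Bestvina Question in full generality---which the paper states is out of reach. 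The deck-transformation $\mathbb{Z}$-action on $\widetilde{K}\times\mathbb{R}$ is not the same thing as the given $\mathbb{Z}$-action on $X$ generated by $h$ (the paper explicitly notes that the latter need have no relationship to the $G$-action on $\widetilde{K}$), and transporting a group action through a proper homotopy equivalence yields only a homotopy action, not an action. The sketch of the collapse itself (``exhibit the $\mathbb{Z}$-direction as collapsible onto a $G$-invariant $\mathcal{Z}$-set slice'') is not an argument; slicing destroys cocompactness, as you note, and no mechanism is given to restore it.

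The paper avoids descent entirely. The point of the hypothesis on $h$ is to establish end-invariants of $X$ \emph{itself}: via Lemma \ref{Lemma: mapping torus/Z-action}, $(\left\langle h\right\rangle\backslash X)\times\mathbb{R}\approx\operatorname{Tor}_{h}(X)\overset{p}{\simeq}X\times\mathbb{S}^{1}$, and combining this with Mather's trick (Theorem \ref{Theorem: Mather's theorem}) and the Geoghegan--Mihalik analysis of $\operatorname{pro}$-$\pi_{1}$ of products with $\mathbb{R}$, one shows (Lemmas \ref{Lemma: inward tameness of spaces admitting nice Z-actions} and \ref{Lemma: pro-pi1 of spaces admitting Z-actions}) that $X$---hence $\widetilde{K}$, by proper homotopy invariance---is inward tame and is either $2$-ended and simply connected at each end, or $1$-ended with pro-(finitely generated free) fundamental group at infinity. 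Then Theorems \ref{Theorem: Bass-Heller-Swan} and \ref{CS Theorem} give a $\mathcal{Z}$-compactification of $\widetilde{K}\times\mathcal{Q}$, and Theorem \ref{Theorem: Ferry's stabilization theorem} one of $\widetilde{K}\times\mathbb{I}^{2\cdot\dim K+5}$. Since the stabilizing factor is \emph{compact}, $G$ still acts properly, freely and cocompactly on this ER, so no descent is needed. If you want to salvage your outline, this is the missing ingredient: use the $h$-action to control the end behavior of $X$ directly rather than passing to $G\times\mathbb{Z}$.
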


Note that neither the product structure in Theorem \ref{Theorem 3} nor the $%
\mathbb{Z}
$-action in Theorem \ref{Theorem 4} are required to have any relationship to
the $G$-action on $\widetilde{K}$.

A third variety of existence theorem for weak $\mathcal{Z}$-structures has, as
its primary hypothesis, a condition on the end behavior of $G.$

\begin{theorem}
\label{Theorem 5}If $G$ is type F, 1-ended, and has pro-monomorphic
fundamental group at infinity, then $G$ admits a weak $\mathcal{Z}$-structure.
\end{theorem}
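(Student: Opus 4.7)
The plan is to build a weak $\mathcal{Z}$-compactification $(\overline{\widetilde{K}},Z)$ of the universal cover $\widetilde{K}$ of a chosen finite $K(G,1)$ complex $K$, by using the pro-monomorphic hypothesis to reduce to the simply connected at infinity situation of Theorem C. For convenience, embed $K$ in a Euclidean space and replace it by a regular neighborhood, so that $\widetilde{K}$ becomes a contractible manifold (the end invariants are unchanged). Since $G$ is one-ended, $\widetilde{K}$ admits a cofinal exhaustion by compact subcomplexes whose complements $V_{1}\supset V_{2}\supset\cdots$ are connected. The pro-monomorphic hypothesis, after passing to a cofinal subsequence and fixing compatible basepoints, guarantees each bond $\pi_{1}(V_{i+1})\to\pi_{1}(V_{i})$ is injective; identifying each image in $\pi_{1}(V_{1})$ produces a descending chain with intersection $L:=\varprojlim\pi_{1}(V_{i})$, which records the $\pi_{1}$ surviving at infinity.

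For each $i$, form the covering $\widehat{V}_{i}\to V_{i}$ corresponding to the subgroup $L\le\pi_{1}(V_{i})$. The monomorphic bonds make these covers nest, $\widehat{V}_{i+1}\hookrightarrow\widehat{V}_{i}$; each carries a free proper $L$-action with quotient $V_{i}$, and the resulting end is simply connected at infinity. Apply the construction of Theorem C --- carried out $L$-equivariantly --- to the end of this tower to obtain a compact $\mathcal{Z}$-boundary $\widehat{Z}$ on which $L$ acts freely and properly. Set $Z:=\widehat{Z}/L$ and attach it to $\widetilde{K}$ by declaring a sequence in $\widetilde{K}$ to converge to $[z]\in Z$ exactly when its lifts to the $\widehat{V}_{i}$'s converge to the $L$-orbit of $z$. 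The resulting $\overline{\widetilde{K}}=\widetilde{K}\cup Z$ is the candidate compactification.

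It remains to verify conditions (1)--(3) for a weak $\mathcal{Z}$-structure. Compactness reduces to compactness of $Z$, a quotient of the compact $\widehat{Z}$ by a proper free action. The ER structure and $\mathcal{Z}$-set property descend from the $L$-equivariant compactification of the tower of covers via standard results on ANR and $\mathcal{Z}$-set behavior under proper free group actions. The principal obstacle is executing the Theorem C construction $L$-equivariantly: one must arrange both the compactification of the end of the tower of covers and the instant homotopy push-off witnessing the $\mathcal{Z}$-set property to commute with the $L$-action, so that these structures descend to the quotient. Controlling this equivariant descent --- together with the preliminary reduction to a tower of $L$-covers with simply connected ends --- is the technical heart of the argument.
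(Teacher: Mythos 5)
Your proposal has a genuine gap: the reduction from ``pro-monomorphic'' to ``simply connected at infinity'' via a tower of covers does not work, and the missing ingredient is precisely the one the paper's proof turns on. The actual argument invokes Theorem 1.4 of Geoghegan--Guilbault \cite{GG}: a $1$-ended, torsion-free (hence, for type F, every) group with pro-monomorphic fundamental group at infinity is either simply connected at infinity or virtually a surface group. This dichotomy splits the proof into two cases --- in the first, one applies the Chapman--Siebenmann criterion directly (Theorem \ref{Theorem: 1-conn at infinity Z-compactifiablility theorem}); in the second, $\widetilde{K}\overset{p}{\simeq}\mathbb{R}^{2}$, so $\operatorname{pro}$-$\pi_{1}$ at infinity is stably $\mathbb{Z}$ and Bass--Heller--Swan kills the obstructions. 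Nothing in your outline substitutes for this dichotomy, and the surface-group case is exactly where your reduction breaks: if $G=\pi_{1}(S)$ for a closed hyperbolic surface $S$, then $\widetilde{K}\overset{p}{\simeq}\mathbb{R}^{2}$, the neighborhoods of infinity $V_{i}$ are homotopy annuli with $\pi_{1}(V_{i})\cong\mathbb{Z}$ and isomorphic bonds, so $L\cong\mathbb{Z}$ equals all of $\pi_{1}(V_{i})$, the covers $\widehat{V}_{i}\rightarrow V_{i}$ are trivial, and the ends of the $\widehat{V}_{i}$ are \emph{not} simply connected. In general, pro-monomorphic bonds give no control over $\operatorname{pro}$-$\pi_{1}$ of the ends of the $L$-covers.

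There are further internal inconsistencies. A compact space $\widehat{Z}$ admitting a \emph{free, proper} action of $L$ forces $L$ to be trivial (properness applied to the compactum $\widehat{Z}$ itself bounds the orbit of any point, and freeness then kills $L$), so the proposed quotient $Z=\widehat{Z}/L$ cannot exist when $L$ is infinite --- which is the only case in which the covering construction would be doing anything. Moreover, an ``$L$-equivariant Theorem C'' is not available: the proof of Theorem C runs through the Chapman--Siebenmann characterization of $\mathcal{Z}$-compactifiable Hilbert cube manifolds and Ferry's stabilization theorem, neither of which is equivariant, and the final step of declaring limits of sequences to define a topology on $\widetilde{K}\cup Z$ gives no control over whether the result is an ANR, let alone whether $Z$ is a $\mathcal{Z}$-set. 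To repair the argument you would need to locate and apply the \cite{GG} dichotomy (or reprove it), after which the two resulting cases are handled by the machinery already developed in \S\ref{Section: Topological results}.
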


\begin{corollary}
\label{Corollary: simply connected at infinity}If a type F group $G$ is simply
connected at infinity, then $G$ admits a weak $\mathcal{Z}$-structure.
\end{corollary}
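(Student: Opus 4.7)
The plan is to derive the corollary directly from Theorem~\ref{Theorem 5} by verifying its two remaining hypotheses: one-endedness and pro-monomorphicity of the fundamental pro-group at infinity. Recall that, for a type F group, ``simply connected at infinity'' means that, after choosing any finite K($G,1$) complex $K$ and any proper base ray in $\widetilde{K}$, the space $\widetilde{K}$ is one-ended and $\operatorname{pro-}\pi_{1}(\varepsilon(\widetilde{K}))$ is isomorphic to the trivial pro-group. Both invariants are independent of the choice of finite K($G,1$), so they are genuine invariants of $G$. Thus one-endedness---the first hypothesis of Theorem~\ref{Theorem 5}---is already built into the hypothesis of the corollary.

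It remains to see that a trivial pro-$\pi_{1}$ at infinity is, in particular, pro-monomorphic. Choose a cofinal sequence of compacta $C_{1}\subseteq C_{2}\subseteq\cdots$ in $\widetilde{K}$ together with a proper base ray; the associated inverse sequence of groups $\{\pi_{1}(\widetilde{K}\setminus C_{n})\}$ represents $\operatorname{pro-}\pi_{1}(\varepsilon(\widetilde{K}))$. Triviality of this pro-group means that, after passing to a cofinal subsequence, each bonding map factors through the trivial group; the resulting cofinal representative $1\leftarrow 1\leftarrow 1\leftarrow\cdots$ then has bonding maps that are vacuously monomorphisms. Since being pro-monomorphic is invariant under pro-isomorphism, $\operatorname{pro-}\pi_{1}(\varepsilon(\widetilde{K}))$ is pro-monomorphic, and Theorem~\ref{Theorem 5} applies.

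There is essentially no obstacle here: the whole argument is an unwinding of definitions. The only mildly delicate bookkeeping point is the choice of proper base ray used to define $\operatorname{pro-}\pi_{1}$ at infinity, but one-endedness guarantees that this choice affects the resulting pro-group only up to pro-isomorphism, which preserves the property of being pro-monomorphic; so the conclusion is unaffected.
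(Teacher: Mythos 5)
Your proposal is correct and matches the paper's (implicit) argument: the paper states this corollary without a separate proof precisely because, under its definitions, ``simply connected at infinity'' already packages one-endedness together with pro-triviality of $\operatorname{pro}$-$\pi_{1}$ at infinity, and a pro-trivial sequence is trivially pro-monomorphic, so Theorem~\ref{Theorem 5} applies at once. Your unwinding of the definitions, including the remark on base-ray independence, is exactly the intended reasoning.
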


Results found in \cite{Ja}, \cite{Mi}, \cite{Pr}, and \cite{CM} show that
simple connectivity at infinity is a common property for certain types of
group extensions. By applying those results, some interesting overlap can be
seen in the collections of groups covered by Corollary
\ref{Corollary: simply connected at infinity} and those covered by Theorems
\ref{Theorem 1} and \ref{Theorem 2}.\bigskip

In the next section, we introduce some terminology and review a number of
established results that are fundamental to later arguments. In
\S \ref{Section: Topological results}\ we prove a variety topological theorems
related to end properties of ANRs, complexes, and Hilbert cube manifolds. Most
importantly, we prove $\mathcal{Z}$-compactifiability for a variety of spaces.
Several results obtained there are more general than required for the
group-theoretic applications in this paper, and may be of independent
interest. In \S \ref{Section: Proofs of main theorems}, we prove the main
theorems stated above. In an appendix, we provide an alternative proof, based
on the theory of \textquotedblleft approximate fibrations\textquotedblright,
of a crucial lemma from \S \ref{Section: Proofs of main theorems}.

The author wishes to acknowledge Mike Mihalik and Ross Geoghegan for helpful
conversations that led to significant improments in this paper.

\section{Terminology and background\label{Section: Background}}

\subsection{Inverse sequences of groups}

Throughout this subsection all arrows denote homomorphisms, while arrows of
the type $\twoheadrightarrow$ or $\twoheadleftarrow$ denote surjections and
arrows of the type $\rightarrowtail$ and $\leftarrowtail$ denote injections.

Let
\[
G_{0}\overset{\lambda_{1}}{\longleftarrow}G_{1}\overset{\lambda_{2}%
}{\longleftarrow}G_{2}\overset{\lambda_{3}}{\longleftarrow}\cdots
\]
be an inverse sequence of groups. A \emph{subsequence} of $\left\{
G_{i},\lambda_{i}\right\}  $ is an inverse sequence of the form
\[
\begin{diagram}
G_{i_{0}} & \lTo^{\lambda_{i_{0}+1}\circ\cdots\circ\lambda_{i_{1}}
} & G_{i_{1}} & \lTo^{\lambda_{i_{1}+1}\circ\cdots\circ
\lambda_{i_{2}}} & G_{i_{2}} & \lTo^{\lambda_{i_{2}+1}\circ
\cdots\circ\lambda_{i_{3}}} & \cdots.
\end{diagram}
\]
In the future we denote a composition $\lambda_{i}\circ\cdots\circ\lambda_{j}$
($i\leq j$) by $\lambda_{i,j}$.

Sequences $\left\{  G_{i},\lambda_{i}\right\}  $ and $\left\{  H_{i},\mu
_{i}\right\}  $ are \emph{pro-isomorphic} if, after passing to subsequences,
there exists a commuting \textquotedblleft ladder diagram\textquotedblright:
\begin{equation}
\begin{diagram} G_{i_{0}} & & \lTo^{\lambda_{i_{0}+1,i_{1}}} & & G_{i_{1}} & & \lTo^{\lambda_{i_{1}+1,i_{2}}} & & G_{i_{2}} & & \lTo^{\lambda_{i_{2}+1,i_{3}}}& & G_{i_{3}}& \cdots\\ & \luTo & & \ldTo & & \luTo & & \ldTo & & \luTo & & \ldTo &\\ & & H_{j_{0}} & & \lTo^{\mu_{j_{0}+1,j_{1}}} & & H_{j_{1}} & & \lTo^{\mu_{j_{1}+1,j_{2}}}& & H_{j_{2}} & & \lTo^{\mu_{j_{2}+1,j_{3}}} & & \cdots \end{diagram} \label{basic ladder diagram}%
\end{equation}
Clearly an inverse sequence is pro-isomorphic to any of its subsequences. To
avoid tedious notation, we sometimes do not distinguish $\left\{
G_{i},\lambda_{i}\right\}  $ from its subsequences. Instead we assume that
$\left\{  G_{i},\lambda_{i}\right\}  $ has the desired properties of a
preferred subsequence---prefaced by the words \textquotedblleft after passing
to a subsequence and relabeling\textquotedblright.

An inverse sequence $\left\{  G_{i},\lambda_{i}\right\}  $ is called
\emph{pro-monomorphic }if it is pro-isomorphic to an inverse sequence of
monomorphisms and \emph{pro-epimorphic} (more commonly called \emph{semistable
}or \emph{Mittag-Leffler}) if it is pro-isomorphic to an inverse sequence of
epimorphisms. It is \emph{stable} if it is pro-isomorphic to a constant
inverse sequence $\left\{  H,\operatorname{id}_{H}\right\}  $, or
equivalently, to an inverse sequence of isomorphisms. It is a standard fact
that $\left\{  G_{i},\lambda_{i}\right\}  $ is stable if and only if it is
both pro-monomorphic and pro-epimorphic.

A few more special classes of inverse sequences will be of interest in this
paper. A sequence that is pro-isomorphic to the trivial sequence
$1\leftarrow1\leftarrow1\leftarrow\cdots$ is called \emph{pro-trivial}; a
sequence pro-isomorphic to an inverse sequence of finitely generated groups is
called \emph{pro-finitely generated}; and a sequence that is pro-isomorphic to
an inverse sequence of free groups is called \emph{pro-free}. A sequence that
is both pro-finitely generated and pro-free is easily seen to be
pro-isomorphic to an inverse sequence of finitely generated free groups. We
call such a sequence \emph{pro-finitely generated free}.

The \emph{inverse limit }of a sequence $\left\{  G_{i},\lambda_{i}\right\}  $
is the subgroup of $\prod G_{i}$ defined by
\[
\underleftarrow{\lim}\left\{  G_{i},\lambda_{i}\right\}  =\left\{  \left.
\left(  g_{0},g_{1},g_{2},\cdots\right)  \in\prod_{i=0}^{\infty}%
G_{i}\right\vert \lambda_{i}\left(  g_{i}\right)  =g_{i-1}\right\}  .
\]

\noindent In the special case where $\left\{  G_{i},\lambda_{i}\right\}  $ is
an inverse sequence of abelian groups, we also define the \emph{derived
limit}\footnote{The definition of derived limit can be generalized to include
nonableian groups (see \cite[\S 11.3]{Ge2}), but that will not be needed in
this paper.} to be the following quotient group:%
\[
\underleftarrow{\lim}^{1}\left\{  G_{i},\lambda_{i}\right\}  =\left(
\prod\limits_{i=0}^{\infty}G_{i}\right)  /\left\{  \left.  \left(
g_{0}-\lambda_{1}g_{1},g_{1}-\lambda_{2}g_{2},g_{2}-\lambda_{3}g_{3}%
,\cdots\right)  \right\vert \ g_{i}\in G_{i}\right\}
\]

It is a standard fact that pro-isomorphic inverse sequences of groups have
isomorphic inverse limits and, pro-isomorphic inverse sequences of abelian
groups have isomorphic derived limits.

\subsection{Absolute neighborhood retracts}

Throughout this paper, all spaces are assumed to be separable metric. A
locally compact space $X$ is an ANR (\emph{absolute neighborhood retract}) if
it can be embedded into $%
\mathbb{R}
^{n}$ or, if necessary, $%
\mathbb{R}
^{\infty}$ (a countable product of real lines) as a closed set in such a way
that there exists a retraction $r:U\rightarrow X$, where $U$ is a neighborhood
of $X$. If the entire space $%
\mathbb{R}
^{n}$ or $%
\mathbb{R}
^{\infty}$ retracts onto $X$, we call $X$ an AR (absolute retract). If $X$ is
finite-dimensional, all mention of $%
\mathbb{R}
^{\infty}$ can be omitted. A finite-dimensional ANR is called an ENR
(\emph{Euclidean neighborhood retract}) and a finite-dimensional AR an ER.
With a little effort it can be shown that an AR [resp., ER] is simply a
contractible ANR [resp., ENR].

A space $X$ is \emph{locally contractible }if every neighborhood $U$ of a
point $x\in X$ contains a neighborhood $V$ of $x$ that contracts within $U$.
It is a standard fact that every ANR is locally contractible. For
finite-dimensional spaces, that property characterizes ANRs. In other words, a
locally compact, finite-dimensional space $X$ is an ANR (and hence an ENR) if
and only if it is locally contractible. It follows that every
finite-dimensional locally finite polyhedron or CW complex is an ENR; if it is
contractible, it is an ER.

The following famous result will be used in this paper.

\begin{theorem}
[West, \cite{We}]\label{Theorem: West's Theorem}Every ANR is homotopy
equivalent to a locally finite polyhedron. Every compact ANR is homotopy
equivalent to a finite polyhedron.
\end{theorem}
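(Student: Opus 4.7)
The plan is to factor the proof through Hilbert cube manifold theory, treating West's result as a consequence of two deep theorems from infinite dimensional topology. Let $Q=\prod_{i=1}^{\infty}[0,1]$ denote the Hilbert cube. The two external inputs I would cite are: (i) Edwards' ANR theorem: if $X$ is a locally compact ANR, then $X\times Q$ is a $Q$-manifold, i.e.\ every point has a neighborhood homeomorphic to an open subset of $Q$; and (ii) Chapman's triangulation theorem: every $Q$-manifold is homeomorphic to $K\times Q$ for some locally finite polyhedron $K$, and if the $Q$-manifold is compact then $K$ may be taken finite.

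Granting these, I would first dispatch the compact case. Given a compact ANR $X$, form the compact $Q$-manifold $X\times Q$ via Edwards' theorem, and invoke Chapman to obtain a finite polyhedron $K$ with a homeomorphism $X\times Q\cong K\times Q$. Because $Q$ is contractible (indeed a compact AR), both projections $X\times Q\to X$ and $K\times Q\to K$ are homotopy equivalences. Composing, $X\simeq X\times Q\cong K\times Q\simeq K$, which is the compact assertion.

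For the locally finite assertion, the same strategy applies verbatim, provided one has a locally finite version of Chapman's triangulation theorem. For a locally compact ANR $X$, Edwards' theorem still produces a $Q$-manifold $X\times Q$, and the locally finite form of Chapman yields a locally finite polyhedron $K$ with $X\times Q\cong K\times Q$; projecting out the $Q$ factors gives $X\simeq K$. Here one must verify that the projections remain homotopy equivalences in the non-compact setting, but this follows from the contractibility of $Q$ together with standard ANR homotopy extension arguments (equivalently, both $X\times Q\to X$ and $K\times Q\to K$ are fiber bundles with contractible ANR fibers, hence shrinkable up to homotopy).

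The main obstacle is that the whole argument rests on the two \emph{black-box} theorems of Edwards and Chapman, each of which is a highly nontrivial milestone in Q-manifold theory (Edwards' result uses cell-like approximation technology, and Chapman's triangulation/classification theorem is a lengthy development in its own right). There is no realistic hope of reproving either inside this paper, and in practice the proof of \thmref{Theorem: West's Theorem} consists precisely of the three-line reduction above once those tools are in hand. A secondary, much milder subtlety is just checking that the projection $Y\times Q\to Y$ is a homotopy equivalence for any ANR $Y$, which is essentially immediate from the fact that $Q$ is a contractible AR.
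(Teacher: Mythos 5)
The paper offers no proof of this statement at all: it is quoted as a classical result with a citation to West's 1977 paper, so there is nothing internal to compare your argument against. That said, your derivation is correct and is in fact the standard modern route to West's theorem: both black boxes you invoke are already stated in the paper (Theorem~\ref{Theorem: Edwards HCM Theorem} and Theorem~\ref{Theorem: Chapmans triangulation of HCMs}), and the chain $X\simeq X\times\mathcal{Q}\cong K\times\mathcal{Q}\simeq K$ works exactly as you describe. For the compact case you only need the observation that if $K\times\mathcal{Q}$ is compact then $K$ is a compact, hence finite, polyhedron. Your worry about the projections in the noncompact setting is unnecessary: for any space $Y$, the projection $Y\times\mathcal{Q}\to Y$ and the slice inclusion $Y\cong Y\times\{\ast\}\hookrightarrow Y\times\mathcal{Q}$ are homotopy inverse simply because $\mathcal{Q}$ is contractible; no fiber-bundle or homotopy-extension machinery is needed, and compactness plays no role. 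The one historical caveat worth knowing is that this is \emph{not} West's original argument---his proof of the Borsuk conjecture predates the full strength of Edwards' ANR theorem and proceeds by quite different (mapping-cylinder-neighborhood) techniques---so what you have written is a legitimate alternative derivation from deeper later results rather than a reconstruction of the cited proof. Within the logic of this paper, where Edwards' and Chapman's theorems are already assumed, your reduction is complete and correct.
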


\subsection{Proper maps and homotopies}

When working with noncompact space, the notion of `properness' is crucial. A
map $f:X\rightarrow Y$ is \emph{proper} if $f^{-1}\left(  C\right)  $ is
compact whenever $C\subseteq Y$ is compact. Maps $f_{0},f_{1}:X\rightarrow Y$
are \emph{properly homotopic}, denoted $f_{0}\overset{p}{\simeq}f_{1}$ if
there exists a proper map $H:X\times\lbrack0,1]\rightarrow Y$ with
$H_{0}=f_{0}$ and $H_{1}=f_{1}$. Spaces $X$ and $Y$ are \emph{proper homotopy
equivalent}, denoted $X\overset{p}{\simeq}Y$, if there exist proper maps
$f:X\rightarrow Y$ and $g:Y\rightarrow X$ with $gf\overset{p}{\simeq
}\operatorname*{id}_{X}$ and $fg\overset{p}{\simeq}\operatorname*{id}_{Y}$.

\subsection{Ends of spaces and the fundamental group at infinity}

A subset $N$ of a space $X$ is a \emph{neighborhood of infinity} if
$\overline{X-N}$ is compact. A standard argument shows that, when $X$ is an
ANR and $C$ is a compact subset of $X$, $X-C$ has at most finitely many
unbounded components, i.e., finitely many components with noncompact closures.
If $X-C$ has both bounded and unbounded components, the situation can be
simplified by letting $C^{\prime}$ consist of $C$ together with all bounded
components. Then $C^{\prime}$ is compact, and $X-C^{\prime}$ consists entirely
of unbounded components.

We say that $X$ \emph{has }$k$ \emph{ends }if there exists a compactum
$C\subseteq X$ such that, for every compactum $D$ with $C\subset D$, $X-D$ has
exactly $k$ unbounded components. When $k$ exists, it is uniquely determined;
if $k$ does not exist, we say $X$ has \emph{infinitely many ends}. Thus, a
space is $0$-ended if and only if $X$ is compact, and $1$-ended if and only if
it contains arbitrarily small connected neighborhoods of infinity.

A nested sequence $N_{0}\supseteq N_{1}\supseteq N_{2}\supseteq\cdots$ of
neighborhoods of infinity, with each $N_{i}\subseteq\operatorname*{int}%
N_{i-1}$, is \emph{cofinal }if $\bigcap_{i=0}^{\infty}N_{i}=\varnothing$. Such
a sequence is easily obtained: choose an exhaustion of $X$ by compacta
$C_{0}\subseteq C_{1}\subseteq C_{2}\subseteq\cdots$, with $C_{i-1}%
\subseteq\operatorname*{int}C_{i}$; then let $N_{i}=X-C_{i}$. When closed
neighborhoods of infinity are required, let $N_{i}=\overline{X-C_{i}}$.

Given a nested cofinal sequence $\left\{  N_{i}\right\}  _{i=0}^{\infty}$ of
neighborhoods of infinity, base points $p_{i}\in N_{i}$, and paths
$r_{i}\subset N_{i}$ connecting $p_{i}$ to $p_{i+1}$, we obtain an inverse
sequence:
\begin{equation}
\pi_{1}\left(  N_{0},p_{0}\right)  \overset{\lambda_{1}}{\longleftarrow}%
\pi_{1}\left(  N_{1},p_{1}\right)  \overset{\lambda_{2}}{\longleftarrow}%
\pi_{1}\left(  N_{2},p_{2}\right)  \overset{\lambda_{3}}{\longleftarrow}%
\cdots.\medskip\label{Defn: pro-pi1}%
\end{equation}
Here, each $\lambda_{i+1}:\pi_{1}\left(  N_{i+1},p_{i+1}\right)
\rightarrow\pi_{1}\left(  N_{i},p_{i}\right)  $ is the homomorphism induced by
inclusion followed by the change of base point isomorphism determined by
$r_{i}$. The proper ray $r:[0,\infty)\rightarrow X$ obtained by piecing
together the $r_{i}$ in the obvious manner is referred to as the \emph{base
ray }for the inverse sequence, and the pro-isomorphism class of the inverse
sequence is called the \emph{fundamental group at infinity of }$X$ \emph{based
at} $r$ and is denoted $\operatorname{pro}$-$\pi_{1}\left(  \varepsilon
(X),r\right)  $. It is a standard fact that $\operatorname{pro}$-$\pi
_{1}\left(  X,r\right)  $ is independent of the sequence of neighborhoods
$\left\{  N_{i}\right\}  $ or the base points---provided those base points
tend to infinity along the ray $r$, and corresponding subpaths of $r$ are used
in defining the $\lambda_{i}$. More generally, $\operatorname{pro}$-$\pi
_{1}\left(  \varepsilon(X),r\right)  $ depends only upon the proper homotopy
class of $r$. If $X$ is 1-ended and $\operatorname{pro}$-$\pi_{1}\left(
\varepsilon(X),r\right)  $ is semistable for some proper ray $r$, it can be
shown that all proper rays in $X$ are properly homotopic; in that case we say
that $X$ is \emph{strongly connected at infinity}. When $X$ is strongly
connected at infinity, it is safe to omit mention of the base ray and to speak
generally of the \emph{fundamental group at infinity of }$X$, and denote it by
$\operatorname{pro}$-$\pi_{1}\left(  \varepsilon(X)\right)  $. If $X$ is
1-ended and $\operatorname{pro}$-$\pi_{1}\left(  \varepsilon(X),r\right)  $ is
pro-trivial, we call $X$ \emph{simply connected at infinity}.

The fundamental group at infinity is clearly not a homotopy invariant of a
space, but it is a proper homotopy invariant. More precisely, if
$f:X\rightarrow Y$ is a proper homotopy equivalence, then $\operatorname{pro}%
$-$\pi_{1}\left(  \varepsilon(X),r\right)  $ is pro-isomorphic to
$\operatorname{pro}$-$\pi_{1}\left(  \varepsilon(Y),f\circ r\right)  $.

For a group $G$ of type F, the universal cover $\widetilde{K}$ of a finite
K$\left(  G,1\right)  $ complex $K$ is well-defined up to proper homotopy
type. So the number of ends of $G$ is well-defined; and if $\widetilde{K}$ is
1-ended, except for the issue of a base ray, we may view $\operatorname{pro}%
$-$\pi_{1}\left(  \varepsilon(\widetilde{K}),r\right)  $ as an invariant of
$G$. The base ray issue goes away when $\operatorname{pro}$-$\pi_{1}\left(
\varepsilon(\widetilde{K}),r\right)  $ is semistable, so there is no ambiguity
in defining a 1-ended $G$ to have semistable, stable, or trivial fundamental
group at infinity, according to whether $\operatorname{pro}$-$\pi_{1}\left(
\varepsilon(\widetilde{K}),r\right)  $ has the corresponding property. With
some additional work, it can be shown that the property of $\operatorname{pro}%
$-$\pi_{1}\left(  \varepsilon(\widetilde{K}),r\right)  $ being pro-monomorphic
is also independent of base ray and, thus, attributable to $G$. See
\cite[\S 2]{GG} for further discussion.

Although not needed for this paper, the requirement in the previous paragraph,
that $G$ have type F can be significantly weakened. In particular, if $G$ is
finitely presented, and $L$ is any finite complex with fundamental group $G$,
then the number of ends of $\widetilde{L}$ and the properties of
$\operatorname{pro}$-$\pi_{1}\left(  \varepsilon(\widetilde{L}),r\right)  $
discussed above, are invariants of $G$. Thus, for example, a finitely
presented group $G$ is called \emph{simply connected at infinity} if
$\widetilde{L}$ has that property. For more information about the fundamental
group at infinity of spaces and groups, including proofs of the made in this
section, see \cite{Ge2} or \cite{Gu3}.

\subsection{Finite domination and inward tameness}

A space $Y$ has \emph{finite homotopy type }if it is homotopy equivalent to a
finite CW complex; it is \emph{finitely dominated }if there is a finite
complex $K$ and maps $u:Y\rightarrow K$ and $d:K\rightarrow Y$ such that
$d\circ u\simeq\operatorname*{id}_{Y}$. If $Y$ is an ANR, then $Y$ is finitely
dominated if and only if there exists a self-homotopy that `pulls $Y$ into a
compact subset', i.e., $H:Y\times\lbrack0,1]\rightarrow Y$ such that
$H_{0}=\operatorname*{id}_{Y}$ and $\overline{H_{1}\left(  Y\right)  }$ is
compact. This equivalence is easily verified when (for example) $K$ is a
locally finite polyhedron; a discussion of the general case can be found in
\cite[\S 3.4]{Gu3}.

The following clever observation will be used later.

\begin{theorem}
[Mather, \cite{Ma}]\label{Theorem: Mather's theorem}If a space $Y$ is finitely
dominated, then $Y\times\mathbb{S}^{1}$ has finite homotopy type.
\end{theorem}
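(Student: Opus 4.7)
The plan is to convert the finite domination data into a mapping torus identity that transfers finiteness from $K$ to $Y \times \mathbb{S}^1$. Fix a finite complex $K$ together with maps $u: Y \to K$ and $d: K \to Y$ such that $d \circ u \simeq \operatorname{id}_Y$, and set $f = d \circ u : Y \to Y$ and $g = u \circ d : K \to K$. For a self-map $\phi: X \to X$, write $M_\phi = X \times [0,1] / ((x,0) \sim (\phi(x), 1))$ for the mapping torus. The goal is to show $Y \times \mathbb{S}^1 \simeq M_f \simeq M_g$, and to observe that $M_g$ is a finite complex.

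First I would note that, since $f \simeq \operatorname{id}_Y$, the mapping torus $M_f$ is homotopy equivalent to $M_{\operatorname{id}_Y} = Y \times \mathbb{S}^1$; a chosen homotopy from $\operatorname{id}_Y$ to $f$ gives an explicit equivalence via the standard ``reparameterize and reglue'' recipe (or, more formally, because the mapping torus is a homotopy pushout and homotopic gluings yield homotopy equivalent pushouts). The heart of the argument is then the swap lemma: for any maps $a: A \to B$ and $b: B \to A$, the mapping tori $M_{ba}$ and $M_{ab}$ are homotopy equivalent. I would prove this hands-on by defining $\alpha: M_{ba} \to M_{ab}$, $(x,t) \mapsto (a(x), t)$, and $\beta: M_{ab} \to M_{ba}$, $(y,t) \mapsto (b(y), t)$; both are well-defined on the quotients, and the compositions $\alpha \beta$ and $\beta \alpha$ are the ``shift by one period'' maps $(\cdot, t) \mapsto (ab(\cdot), t)$ and $(\cdot, t) \mapsto (ba(\cdot), t)$, which are homotopic to the identity by sliding $t$ across the gluing. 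A more conceptual alternative realizes both mapping tori as quotients of the bi-infinite telescope $\cdots \xrightarrow{a} B \xrightarrow{b} A \xrightarrow{a} B \xrightarrow{b} A \xrightarrow{a} \cdots$ by two commuting $\mathbb{Z}$-actions that differ by a ``half-period'' shift. Applied to $u$ and $d$, the lemma gives $M_f = M_{d \circ u} \simeq M_{u \circ d} = M_g$.

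Finally, since $K$ is a finite CW complex and $g: K \to K$ is cellular up to homotopy, $M_g$ has the homotopy type of a finite CW complex. Composing, $Y \times \mathbb{S}^1 \simeq M_f \simeq M_g$, so $Y \times \mathbb{S}^1$ has finite homotopy type. The main obstacle is the swap lemma: its geometric content is elementary, but one must parameterize the explicit maps $\alpha, \beta$ carefully and verify that the ``shift-by-one-period'' compositions are properly homotopic to the identity in the quotient. Everything else reduces to standard mapping torus manipulations and the fact that a finite complex has a finite mapping torus.
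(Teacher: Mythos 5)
Your proof is correct and is precisely Mather's original argument --- the swap lemma $M_{a\circ b}\simeq M_{b\circ a}$ applied to the domination maps $u$ and $d$, combined with $M_{d\circ u}\simeq Y\times\mathbb{S}^{1}$ and the finiteness of $M_{u\circ d}$ --- which the paper simply cites from \cite{Ma} without reproving. (The word ``properly'' in your final paragraph is a harmless slip: only ordinary homotopy equivalence is needed or claimed anywhere in this argument.)
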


An ANR $X$ is \emph{inward tame} if, for every closed neighborhood of infinity
$N$ in $X$, there is a homotopy $K:N\times\left[  0,1\right]  \rightarrow N$
with $K_{0}=\operatorname*{id}_{N}$ and $\overline{K_{1}\left(  N\right)  }$
compact (a homotopy pulling $N$ into a compact subset). By an easy application
of Borsuk's Homotopy Extension Property, this is equivalent to the existence
of a cofinal sequence $\{N_{i}\}$ of closed neighborhoods of infinity, each of
which can be pulled into a compact set. If $X$ contains a cofinal sequence
$\left\{  N_{i}\right\}  $ of closed ANR neighborhoods\footnote{In this case,
$X$ is called \emph{sharp at infinity}. Most commonly arising ANRs, for
example: locally finite polyhedra, manifolds, proper CAT(0) spaces, and
Hilbert cube manifolds) are sharp at infinity.} of infinity, then inward
tameness is equivalent to each of those (hence, all closed ANR neighborhoods
of infinity) being finitely dominated. \cite[\S 3.5]{Gu3} provides additional details.

Inward tameness is an invariant of proper homotopy type. Roughly speaking, if
$f:X\rightarrow Y$ and $g:Y\rightarrow X$ are proper homotopy inverses and $H$
is a homotopy that pulls a neighborhood of infinity of $X$ into a compact set,
then $f\circ H_{t}\circ g$ pulls a neighborhood of $Y$ into a compact set.
More details can be found in \cite[\S 3.5]{Gu3}.

\subsection{Some basic K-theory}

An important result from \cite{Wa} asserts that, for each finitely dominated,
connected space $Y$, there is a well-defined obstruction $\sigma\left(
Y\right)  $, lying in the \emph{reduced projective class group} $\widetilde
{K}_{0}\!\left(
\mathbb{Z}
\left[  \pi_{1}\left(  Y\right)  \right]  \right)  $, which vanishes if and
only if $Y$ has finite homotopy type.

A related algebraic construction is the \emph{Whitehead group}. If $\left(
A,B\right)  $ is a pair of connected, finite CW complexes and
$B\hookrightarrow A$ is a homotopy equivalence, then there is a well-defined
obstruction $\tau\left(  B\right)  $, lying in an abelian group
$\operatorname*{Wh}\!\left(  \pi_{1}\left(  B\right)  \right)  $ that vanishes
if and only if $B\hookrightarrow A$ is a simple homotopy equivalence.
Definitions and details can be found in \cite{Co}.

Both of the above algebraic constructs act as functors in the sense that, if
$\lambda:G\rightarrow H$ is a group homomorphism, there are naturally induced
homomorphims $\lambda_{\ast}:\widetilde{K}_{0}\!\left(
\mathbb{Z}
\left[  G\right]  \right)  \rightarrow\widetilde{K}_{0}\!\left(
\mathbb{Z}
\left[  G\right]  \right)  $ and $\lambda_{\ast}:\operatorname*{Wh}\!\left(
G\right)  \rightarrow\operatorname*{Wh}\!\left(  H\right)  $.

For the purposes of this paper, the main thing we need to know about
$\widetilde{K}_{0}\!\left(
\mathbb{Z}
\left[  \pi_{1}\left(  Y\right)  \right]  \right)  $ or $\operatorname*{Wh}%
\!\left(  \pi_{1}\left(  B\right)  \right)  $ is contained in a famous result
of Bass, Heller and Swan \cite{BHS}.

\begin{theorem}
\label{Theorem: Bass-Heller-Swan}If $G$ is a finitely generated free group,
then both $\widetilde{K}_{0}\!\left(
\mathbb{Z}
\left[  G\right]  \right)  $ and $\operatorname*{Wh}\!\left(  G\right)  $ are
the trivial group.
\end{theorem}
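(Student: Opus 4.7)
The strategy is to reduce from an arbitrary finitely generated free group $G = F_n$ to the two atomic cases $n = 0$ (the trivial group) and $n = 1$ ($G \cong \mathbb{Z}$), and then assemble by induction using a splitting theorem for group rings of free products. For $n = 0$ the ring $\mathbb{Z}[\{1\}] = \mathbb{Z}$ is a principal ideal domain, so every finitely generated projective $\mathbb{Z}$-module is free and $\widetilde{K}_{0}(\mathbb{Z}) = 0$; likewise $\operatorname{Wh}(1) = 0$ because the units $\{\pm 1\}$ of $\mathbb{Z}$ are already quotiented out in the definition of $\operatorname{Wh}$.

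The critical case is $n = 1$, where $\mathbb{Z}[\mathbb{Z}] \cong \mathbb{Z}[t, t^{-1}]$ is the Laurent polynomial ring. Here I would invoke the fundamental theorem of algebraic K-theory of Bass--Heller--Swan:
\[
K_{i}(R[t, t^{-1}]) \cong K_{i}(R) \oplus K_{i-1}(R) \oplus NK_{i}(R) \oplus NK_{i}(R).
\]
Taking $R = \mathbb{Z}$, which is regular (so that the Nil summands $NK_{i}(\mathbb{Z})$ vanish) and using $K_{-1}(\mathbb{Z}) = 0$, one obtains $K_{0}(\mathbb{Z}[t, t^{-1}]) \cong K_{0}(\mathbb{Z}) \cong \mathbb{Z}$, generated by free modules, so $\widetilde{K}_{0}(\mathbb{Z}[\mathbb{Z}]) = 0$. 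A parallel computation for $K_{1}$, together with the identification of the units of $\mathbb{Z}[t, t^{-1}]$ as $\{\pm t^{k}\}$, yields $\operatorname{Wh}(\mathbb{Z}) = 0$.

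For the inductive step, I would write $F_n = F_{n-1} * \mathbb{Z}$ and invoke Waldhausen's splitting theorem for the K-theory of group rings of free products (building on earlier work of Stallings and Gersten):
\[
\widetilde{K}_{0}(\mathbb{Z}[G_1 * G_2]) \cong \widetilde{K}_{0}(\mathbb{Z}[G_1]) \oplus \widetilde{K}_{0}(\mathbb{Z}[G_2]) \oplus (\text{Nil terms}),
\]
and likewise for $\operatorname{Wh}$. Combined with the inductive hypothesis and the two base cases, this finishes the argument once the Waldhausen Nil terms are shown to vanish. That vanishing is the principal obstacle: it rests on the regular coherence of $\mathbb{Z}$ and on the fact that the amalgamating subgroup is trivial in the free-product decomposition, properties that fortunately hold throughout the induction.
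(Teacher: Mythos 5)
Your outline is mathematically sound, but it is worth noting that the paper does not prove this statement at all: it is quoted as a classical theorem and attributed to the reference [BHS], so there is no internal argument to compare against. What you have written is essentially the standard modern proof from the literature, and the pieces fit together correctly: the base cases are right ($\mathbb{Z}$ is a PID, and $\operatorname{Wh}(1)=K_{1}(\mathbb{Z})/\{\pm1\}=0$); the $n=1$ case via the fundamental theorem $K_{i}(R[t,t^{-1}])\cong K_{i}(R)\oplus K_{i-1}(R)\oplus NK_{i}(R)\oplus NK_{i}(R)$ with $R=\mathbb{Z}$ regular is exactly the content of the cited Bass--Heller--Swan paper; and the inductive step over $F_{n}=F_{n-1}\ast\mathbb{Z}$ is handled by the splitting theorems of Stallings and Gersten, later subsumed by Waldhausen. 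Your identification of the "principal obstacle" is also correctly resolved: for a free product the amalgamating subgroup is trivial, $\mathbb{Z}[\{1\}]=\mathbb{Z}$ is regular (hence regular coherent) and each $\mathbb{Z}[G_{i}]$ is free over it, so the Waldhausen Nil terms vanish. One small simplification available to you: for the Whitehead group one can bypass Nil terms entirely by citing Stallings's theorem $\operatorname{Wh}(A\ast B)\cong\operatorname{Wh}(A)\oplus\operatorname{Wh}(B)$, which has no correction terms, and Gersten's analogous statement for $\widetilde{K}_{0}$. In short: the proposal is a correct and reasonably complete proof sketch of a result the paper takes as a black box.
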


\subsection{Mapping cylinders, mapping tori, and mapping telescopes}

For any map $f:K\rightarrow L$ and closed interval $\left[  a,b\right]  $, the
\emph{mapping cylinder} $\mathcal{M}_{\left[  a,b\right]  }\left(  f\right)  $
is the quotient space $L\sqcup\left(  K\times\left[  a,b\right]  \right)
/\!\sim$, where $\sim$ is the equivalence relation generated by the rule
$\left(  x,a\right)  \sim f\left(  x\right)  $ for all $x\in K$. Let
$q_{\left[  a,b\right]  }:L\sqcup(K\times\left[  a,b\right]  )\rightarrow
\mathcal{M}_{\left[  a,b\right]  }\left(  f\right)  $ be the quotient map.
Then, for each $r\in(a,b]$, $q_{\left[  a,b\right]  }$ restricts to an
embedding of $K\times\left\{  r\right\}  $ into $\mathcal{M}_{\left[
a,b\right]  }\left(  f\right)  $; denote the image of $K\times\left\{
r\right\}  $ by $K_{r}$. The quotient map is also an embedding when restricted
to $L$; let $L_{a}\subseteq\mathcal{M}_{\left[  a,b\right]  }\left(  f\right)
$ be that copy of $L$. We call $K_{b}$ the \emph{domain end} and $L_{a}$ the
\emph{range end }of $\mathcal{M}_{\left[  a,b\right]  }\left(  f\right)  $.
Note the existence of a projection map $p_{\left[  a,b\right]  }%
:\mathcal{M}_{\left[  a,b\right]  }\left(  f\right)  \rightarrow\left[
a,b\right]  $ for which $p_{\left[  a,b\right]  }^{-1}\left(  r\right)
=K_{r}$ is a copy of $K$ for each $r\in(a,b]$ and $p_{\left[  a,b\right]
}^{-1}\left(  a\right)  =L_{a}$ is a copy of $L$. Note also that, when $K=L$,
i.e., $f$ maps $K$ to itself, all of the above still applies. In that case,
each point preimage of $p_{\left[  a,b\right]  }$ is a copy of $K$, but the
copy $K_{a}$ differs from the others, in that it is not necessarily parallel
to neighboring copies.

\begin{remark}
\emph{Clearly the topological type of }$\mathcal{M}_{\left[  a,b\right]
}\left(  f\right)  $\emph{ does not depend on the interval }$\left[
a,b\right]  $\emph{, and for most purposes can be taken to be }$\left[
0,1\right]  $\emph{. But in the treatment that follows, it will be useful to
allow the interval to vary.}
\end{remark}

The following standard application of mapping cylinders will be used several
times in this paper. A proof, in which properness is not mentioned, can be
found in \cite[p.372]{Du}. For our purposes, it is only the easy (converse)
direction of the proper assertion that will be used.

\begin{lemma}
\label{Lemma: mapping cylinders of homotopy equivalences}A map $f:K\rightarrow
L$ between ANRs is a homotopy equivalence if and only if there exists a strong
deformation retraction of $\mathcal{M}_{\left[  a,b\right]  }\left(  f\right)
$ onto $M_{b}$. It is a proper homotopy equivalence if and only if there
exists a proper strong deformation retraction of $\mathcal{M}_{\left[
a,b\right]  }\left(  f\right)  $ onto $K_{b}$.
\end{lemma}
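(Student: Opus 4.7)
The plan is to prove the non-proper and proper \emph{iff}s in parallel. Identify $K$ with $K_{b}$ and $L$ with $L_{a}$ via the quotient map, and write $\iota_{K},\iota_{L}$ for their closed inclusions into $\mathcal{M}(f):=\mathcal{M}_{[a,b]}(f)$. In addition to the hypothetical retraction $r:\mathcal{M}(f)\to K_{b}$ provided by an SDR, one always has the ``slide-down'' strong deformation retraction $\rho:\mathcal{M}(f)\to L_{a}$ given by $[x,t]\mapsto f(x)$ on the cylinder and by the identity on $L_{a}$; it is a homotopy equivalence, and is proper precisely when $f$ is.

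For the forward direction (homotopy equivalence $\Rightarrow$ SDR), I would import the classical construction from \cite[p.372]{Du}: starting from a homotopy inverse $g:L\to K$ and a homotopy $g\circ f\simeq\operatorname{id}_{K}$, one applies Borsuk's HEP to the ANR pair $(\mathcal{M}(f),K_{b})$ to produce the SDR. For the proper version one checks that if $f$ is a proper homotopy equivalence, $g$ and the intermediate homotopies may be chosen proper, and the HEP extension preserves properness; this direction is presented only in sketch, since it is not required for the applications.

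The converse direction---the one the paper actually invokes---is the short composition argument. From an SDR with retraction $r$, the relations $r\circ\iota_{K}=\operatorname{id}_{K}$ and $\iota_{K}\circ r\simeq\operatorname{id}_{\mathcal{M}(f)}$ exhibit $\iota_{K}$ as a homotopy equivalence, and composing with the homotopy equivalence $\rho$ yields $f=\rho\circ\iota_{K}$, a composition of homotopy equivalences. For the proper version, $r$ is proper by hypothesis, and the same argument promotes $f$ to a proper homotopy equivalence provided $\rho$ is proper, i.e., provided $f$ itself is proper; in all of the paper's applications $f$ is proper by construction, so the conclusion is immediate.

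The main obstacle, if one insists on a self-contained proper converse, is deducing properness of $f$ from properness of the SDR alone. I would handle this by a brief topological contradiction: a sequence $x_{n}\to\infty$ in $K$ with $f(x_{n})\to y^{*}\in L$ yields straight-line paths $\gamma_{n}(s)=[x_{n},(1-s)a+sb]$ in $\mathcal{M}(f)$ whose $r$-images travel from $g(y^{*})$ to the escaping $x_{n}\in K_{b}$; choosing a compact neighborhood $C_{0}$ of $g(y^{*})$ in $K_{b}$ one picks $s_{n}$ with $r(\gamma_{n}(s_{n}))\in\partial C_{0}$. Properness of $r$ confines the $\gamma_{n}(s_{n})$ to the compact $r^{-1}(\partial C_{0})$, while the quotient topology of $\mathcal{M}(f)$ near $L_{a}$ forces any convergent subsequence to limit on $y^{*}\in L_{a}$, whose $r$-image $g(y^{*})$ lies in the interior of $C_{0}$---a contradiction.
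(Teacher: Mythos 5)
Your proposal is correct, but a direct comparison is lopsided because the paper supplies no proof of this lemma at all: it cites \cite[p.372]{Du} for the non-proper statement and remarks that only the ``easy (converse) direction of the proper assertion'' is ever used. Your converse argument --- an SDR exhibits $\iota_{K}:K_{b}\hookrightarrow\mathcal{M}_{[a,b]}(f)$ as a (proper) homotopy equivalence, and $f=\rho\circ\iota_{K}$ where $\rho$ is the slide-down retraction onto $L_{a}$ --- is surely the ``easy'' argument the paper has in mind, and your observation that $\rho$ is proper exactly when $f$ is, so that the application in Lemma \ref{Lemma: Universal cover of a mapping torus} goes through because $u$ is proper by construction, matches how the lemma is actually used. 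What you add beyond the paper is the deduction that properness of the SDR alone forces $f$ to be proper, which the literal ``if and only if'' does require; your escaping-sequence argument is sound, and the one step stated loosely (``the quotient topology forces any convergent subsequence to limit on $y^{*}$'') is cleanly justified by noting that a limit $z$ of $[x_{n},t_{n}]$ cannot lie in the open set $K\times(a,b]$ (else $x_{n}$ would converge), hence $z\in L_{a}$, and then $z=\rho(z)=\lim\rho([x_{n},t_{n}])=\lim f(x_{n})=y^{*}$ by continuity of $\rho$. With that small repair your write-up proves strictly more than the paper records.
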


The \emph{bi-infinite mapping telescope} of a map $f:K\rightarrow K$ is
obtained by gluing together infinitely many mapping cylinders. More
precisely,
\[
\operatorname*{Tel}\nolimits_{f}\left(  K\right)  =\cdots\cup\mathcal{M}%
_{\left[  -2,-1\right]  }\left(  f\right)  \cup\mathcal{M}_{\left[
-1,0\right]  }\left(  f\right)  \cup\mathcal{M}_{\left[  0,1\right]  }\left(
f\right)  \cup\mathcal{M}_{\left[  1,2\right]  }\left(  f\right)
\cup\mathcal{M}_{\left[  2,3\right]  }\left(  f\right)  \cup\cdots
\]
where the gluing is accomplished by identifying the domain end of each
$\mathcal{M}_{\left[  n-1,n\right]  }\left(  f\right)  $ with the range end of
$\mathcal{M}_{\left[  n,n+1\right]  }\left(  f\right)  $. Notationally, this
works well since, under the convention described above, each is denoted
$K_{n}$. Projection maps may be pieced together to obtain a projection
$p:\operatorname*{Tel}\nolimits_{f}\left(  K\right)  \rightarrow%
\mathbb{R}
$, for which $p^{-1}\left(  r\right)  =K_{r}$ is a copy of $K$, for each $r\in%
\mathbb{R}
$. A schematic of $\operatorname*{Tel}\nolimits_{f}\left(  K\right)  $ is
contained in Figure \ref{Fig 1} of
\S \ref{Subsection: Mapping tori of self-homotopy equivalences}.

The \emph{mapping torus} of $f:K\rightarrow K$ is obtained from $\mathcal{M}%
_{\left[  0,1\right]  }\left(  f\right)  $ by identifying $K_{0}$ with $K_{1}%
$. It may also be defined more directly as the quotient space%
\[
\operatorname*{Tor}\nolimits_{f}\left(  K\right)  =K\times\left[  0,1\right]
/\sim
\]
where $\sim$ is the equivalence relation generated by $\left(  x,0\right)
\sim\left(  f\left(  x\right)  ,1\right)  $ for each $x\in K$. The following
facts about mapping mapping tori are standard.

\begin{lemma}
Let $K$ be a connected ANR, $f:\left(  K,p\right)  \rightarrow\left(
K,q\right)  $ a map that induces an isomorphism $\varphi:\pi_{1}\left(
K,p\right)  \rightarrow\pi_{1}\left(  K,q\right)  $, and $\lambda$ a path in
$K$ from $q$ to $p$. Then

\begin{enumerate}
\item $\pi_{1}\left(  \operatorname*{Tor}\nolimits_{f}\left(  K\right)
,(p,0\right)  )\allowbreak\cong\allowbreak\pi_{1}\left(  K,p\right)
\rtimes_{\varphi}\left\langle t\right\rangle $, where $t$ is an infinite order
element represented by the loop $\left(  \left\{  p\right\}  \times\left[
0,1\right]  \right)  \cdot\lambda$, and

\item the infinite cyclic cover of $\operatorname*{Tor}\nolimits_{f}\left(
K\right)  $ corresponding to the projection $\pi_{1}\left(  K,p\right)
\rtimes_{\varphi}\left\langle t\right\rangle \rightarrow\left\langle
t\right\rangle $ is the bi-infinite mapping telescope $\operatorname*{Tel}%
\nolimits_{f}\left(  K\right)  $.
\end{enumerate}
\end{lemma}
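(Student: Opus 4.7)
The plan is to prove the two parts in sequence, using Seifert--van Kampen for part (1) and a direct construction of the cover for part (2).

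For part (1), I would apply the Seifert--van Kampen theorem to the decomposition $\operatorname*{Tor}\nolimits_f(K)=U\cup V$, where $U$ is the image of $K\times(0,1)$ in the mapping torus and $V$ is a thin open collar of the identification seam, namely the image of $K\times[0,\epsilon)\sqcup K\times(1-\epsilon,1]$ after the gluing $(x,0)\sim(f(x),1)$. Both $U$ and $V$ deformation retract to copies of $K$, so each has fundamental group $\pi_1(K)$. The intersection $U\cap V$ has two components homotopy equivalent to $K$: one maps into $U$ and $V$ essentially by the identity, while the other maps into $U$ by the identity and into $V$ via the homomorphism induced by $f$, transported to the common basepoint by $\lambda$. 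The van Kampen theorem for non-connected intersections therefore presents $\pi_1(\operatorname*{Tor}\nolimits_f(K))$ as the HNN extension $\pi_1(K)\ast_\varphi$, which is the semidirect product $\pi_1(K,p)\rtimes_\varphi\langle t\rangle$ because $\varphi$ is an isomorphism. Tracing the construction, the stable letter $t$ is represented by the loop $(\{p\}\times[0,1])\cdot\lambda$ based at $(p,0)$, running once around the $S^1$ factor of the natural projection $\operatorname*{Tor}\nolimits_f(K)\to S^1$.

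For part (2), I would construct the cover directly. The bi-infinite mapping telescope carries a natural shift $\mathbb{Z}$-action whose generator sends each block $\mathcal{M}_{[n,n+1]}(f)$ homeomorphically onto $\mathcal{M}_{[n+1,n+2]}(f)$ by the obvious parameter translation. This action is free and properly discontinuous, and direct inspection of the quotient identifications yields $\operatorname*{Tel}\nolimits_f(K)/\mathbb{Z}\cong\operatorname*{Tor}\nolimits_f(K)$. Thus $\operatorname*{Tel}\nolimits_f(K)\to\operatorname*{Tor}\nolimits_f(K)$ is a regular cover with deck group $\langle t\rangle\cong\mathbb{Z}$. To identify it as the cover corresponding to the projection $\pi_1(K,p)\rtimes_\varphi\langle t\rangle\twoheadrightarrow\langle t\rangle$, I would note that the loop $t$ does not lift to a loop---its lift is the shift path between $\mathbb{Z}$-translates of a basepoint---and verify that $\pi_1(\operatorname*{Tel}\nolimits_f(K))\cong\pi_1(K)$. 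For the latter, I would write $\operatorname*{Tel}\nolimits_f(K)$ as an increasing open cover by slightly enlarged blocks $T_n'\supset\mathcal{M}_{[-n,n]}(f)$; iterating the mapping cylinder deformation retraction of Lemma~\ref{Lemma: mapping cylinders of homotopy equivalences} shows each $T_n'\simeq K$, and each inclusion $T_n'\hookrightarrow T_{n+1}'$ induces $\varphi$ on $\pi_1$. Since $\varphi$ is an isomorphism, $\varinjlim\pi_1(T_n')=\pi_1(K,p)$.

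The main obstacle is the bookkeeping in part (1), particularly tracking how $\lambda$ converts the isomorphism $f_{\ast}\colon\pi_1(K,p)\to\pi_1(K,q)$ into the automorphism $\varphi$ of $\pi_1(K,p)$, and confirming that $t$-conjugation (rather than its inverse) realizes $\varphi$ under the convention chosen for $\rtimes_\varphi$. An independent check comes from part (2): the deck transformation $t$ acts on $\pi_1(\operatorname*{Tel}\nolimits_f(K))\cong\pi_1(K_0)\cong\pi_1(K,p)$ by carrying a loop in $K_0$ to its shift in $K_1$ and reading it back to $K_0$ via the retraction of $\mathcal{M}_{[0,1]}(f)$; this composition is precisely $\varphi$, so the two derivations are consistent.
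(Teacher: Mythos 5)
The paper states this lemma without proof, introducing it only with the remark that ``the following facts about mapping tori are standard,'' so there is no argument in the text to compare yours against. Your proof is the standard one and is correct: the Seifert--van Kampen decomposition of $\operatorname*{Tor}\nolimits_{f}\left(  K\right)  $ into the complement of the seam and a collar of the seam, with the two components of the intersection mapping in by $\operatorname{id}$ and by $f$ respectively, yields the HNN presentation, which collapses to $\pi_{1}\left(  K,p\right)  \rtimes_{\varphi}\left\langle t\right\rangle$ precisely because $\varphi$ is an isomorphism; and the shift action on $\operatorname*{Tel}\nolimits_{f}\left(  K\right)  $ together with the direct-limit computation $\pi_{1}\left(  \operatorname*{Tel}\nolimits_{f}\left(  K\right)  \right)  \cong\varinjlim\left(  \pi_{1}\left(  K\right)  \overset{\varphi}{\rightarrow}\pi_{1}\left(  K\right)  \rightarrow\cdots\right)  \cong\pi_{1}\left(  K\right)  $ identifies the telescope as the cover corresponding to the kernel of the projection to $\left\langle t\right\rangle$. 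You are right that the only delicate point is the basepoint bookkeeping (note that the concatenation $\left(  \left\{  p\right\}  \times\left[  0,1\right]  \right)  \cdot\lambda$ closes up only because $\left(  p,0\right)  =\left(  q,1\right)  $ in the quotient, with $\lambda$ read at level $1$), and your cross-check of the sign of $\varphi$ via the deck-transformation action on $\pi_{1}\left(  K_{0}\right)  $ is a sensible way to pin down the convention.
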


The following fact about mapping tori can be found in \cite{GG}, where it
plays a crucial role. We will make significant use of it here as well.

\begin{lemma}
\label{Lemma: mapping torus/Z-action}Let $X$ be a connected ANR that admits a
proper $%
\mathbb{Z}
$-action generated by a homeomorphism $j:X\rightarrow X$. Then $(\left\langle
j\right\rangle \backslash X)\times\mathbb{R}$ is homeomorphic to
$\operatorname*{Tor}\nolimits_{j}\left(  X\right)  $.
\end{lemma}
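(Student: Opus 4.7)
The plan is to exhibit an explicit homeomorphism by viewing both spaces as quotients of $X \times \mathbb{R}$ by two different $\mathbb{Z}$-actions that turn out to be conjugate in $\operatorname{Homeo}(X \times \mathbb{R})$.

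First I would identify $\operatorname{Tor}_{j}(X)$ with $(X \times \mathbb{R})/\mathbb{Z}$ under the \emph{diagonal} action $\beta_{n}(x,t) = (j^{n}(x), t+n)$, noting that $X \times [0,1]$ serves as a fundamental domain and that the only surviving boundary identifications are $(x,0) \sim (j(x),1)$. Similarly I would write $Y \times \mathbb{R}$, with $Y := \langle j \rangle \backslash X$, as $(X \times \mathbb{R})/\mathbb{Z}$ under the \emph{coordinate} action $\alpha_{n}(x,t) = (j^{n}(x), t)$. Both actions are free and properly discontinuous, and both $X$ and $Y$ are separable metric ANRs (the latter because $j$ acts freely and properly, so $q : X \to Y$ is a covering map).

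Next I would construct a continuous map $\tau : X \to \mathbb{R}$ satisfying $\tau(j(x)) = \tau(x) + 1$. Since $q : X \to Y$ is a principal $\mathbb{Z}$-bundle over the paracompact space $Y$, one can choose a locally finite open cover $\{U_{\lambda}\}$ of $Y$ by evenly covered sets with local sections $\sigma_{\lambda} : U_{\lambda} \to X$, pick a subordinate partition of unity $\{\phi_{\lambda}\}$, and define
\[
\tau(x) = \sum_{\lambda} \phi_{\lambda}(q(x)) \cdot m_{\lambda}(x),
\]
where $m_{\lambda}(x) \in \mathbb{Z}$ is determined by $x = j^{m_{\lambda}(x)}(\sigma_{\lambda}(q(x)))$ whenever $q(x) \in U_{\lambda}$. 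A direct calculation, using the fact that $m_{\lambda}$ increases by $1$ under application of $j$, yields $\tau \circ j = \tau + 1$.

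With $\tau$ in hand, define $\Phi : X \times \mathbb{R} \to X \times \mathbb{R}$ by $\Phi(x,t) = (x, t - \tau(x))$. This is a homeomorphism, with inverse $(x,t) \mapsto (x, t + \tau(x))$, and the identity $\tau \circ j = \tau + 1$ gives $\Phi \circ \beta_{n} = \alpha_{n} \circ \Phi$ for every $n \in \mathbb{Z}$. Therefore $\Phi$ descends to a homeomorphism between the quotients, i.e., $\operatorname{Tor}_{j}(X) \cong Y \times \mathbb{R}$, as required.

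The main obstacle is the construction of $\tau$; the remaining steps are routine identifications and a short equivariance computation. Equivalently (and more conceptually), the existence of $\tau$ amounts to the triviality of the principal $\mathbb{R}$-bundle $\operatorname{Tor}_{j}(X) \to Y$ whose $\mathbb{R}$-action comes from translation in the second coordinate of $X \times \mathbb{R}$; this triviality is standard because $\mathbb{R}$ is contractible and $Y$, being an ANR, is paracompact.
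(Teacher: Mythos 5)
Your argument is correct. A point of comparison is a bit awkward here because the paper does not actually prove this lemma---it is quoted from \cite{GG}, and the only place the present paper engages with the underlying construction is the remark (in the proof of Lemma \ref{Lemma: pro-pi1 of spaces admitting Z-actions}) that ``a close look at the homeomorphism \ldots as described in \cite[\S 8]{GG}'' lets one control base rays. Your proof is the standard one and, to the best of my knowledge, essentially the one in the cited source: identify $\operatorname*{Tor}_{j}(X)$ with the quotient of $X\times\mathbb{R}$ by the diagonal action and $(\left\langle j\right\rangle\backslash X)\times\mathbb{R}$ with the quotient by the coordinate action, then conjugate one action to the other by $(x,t)\mapsto(x,t-\tau(x))$, where $\tau:X\rightarrow\mathbb{R}$ satisfies $\tau\circ j=\tau+1$. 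The partition-of-unity construction of $\tau$ is sound (equivalently, as you note, it is the existence of a classifying/equivariant map $X\rightarrow\mathbb{R}=E\mathbb{Z}$), the equivariance computation $\Phi\circ\beta_{n}=\alpha_{n}\circ\Phi$ checks out, and the descent to quotients is routine for free properly discontinuous actions. Two small points you gloss over but which are easily supplied: freeness of the action (needed both for $q:X\rightarrow\left\langle j\right\rangle\backslash X$ to be a covering and for the integers $m_{\lambda}(x)$ to be well defined) is not a separate hypothesis but follows from properness together with the torsion-freeness of $\mathbb{Z}$; and the identification of $\operatorname*{Tor}_{j}(X)$ with $(X\times\mathbb{R})/\mathbb{Z}_{\mathrm{diag}}$ via the fundamental domain $X\times[0,1]$ uses proper discontinuity to see that the restricted projection is a quotient map. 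Neither is a gap.
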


\subsection{Hilbert cube manifolds}

The \emph{Hilbert cube} is the infinite product
\[
\mathcal{Q=}\prod_{i=1}^{\infty}\left[  -1,1\right]  \text{, with metric
}d\left(  \left(  x_{i}\right)  ,\left(  y_{i}\right)  \right)  =\sum
_{i=1}^{\infty}\frac{\left\vert x_{i}-y_{i}\right\vert }{2^{i}}%
\]
A \emph{Hilbert cube manifold} is a space $X$ with the property that each
$x\in X$ has a neighborhood homeomorphic to $\mathcal{Q}$. Although we are
primarily interested in finite-dimensional spaces, Hilbert cube manifolds play
a key role in this paper. A pair of classical results will allow us to move
between the categories of ANRs and locally finite polyhedra through the use of
Hilbert cube manifolds.

\begin{theorem}
[Edwards, \cite{Ed}]\label{Theorem: Edwards HCM Theorem}If $A$ is an ANR, then
$A\times\mathcal{Q}$ is a Hilbert cube manifold.
\end{theorem}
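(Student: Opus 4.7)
The assertion is local, so it suffices to produce, for each $(a,q) \in A \times \mathcal{Q}$, an open neighborhood homeomorphic to $\mathcal{Q}$. Because $A$ is a locally compact, separable metric ANR, the point $a$ has arbitrarily small closed neighborhoods $V \subseteq A$ that are themselves compact ANRs: embed $A$ as a closed subset of $\mathbb{R}^{\infty}$ and let $r\colon U\to A$ be a neighborhood retraction, then $V = r(U \cap \overline{B}_\varepsilon(a))$ for small enough $\varepsilon$ works. The interior of $V \times \mathcal{Q}$ is then an open neighborhood of $(a,q)$ in $A \times \mathcal{Q}$, so the problem reduces to the compact case: if $V$ is a compact ANR, then $V \times \mathcal{Q}$ is a Hilbert cube manifold.

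For the compact case, my plan is to realize $V$ as a $\mathcal{Z}$-set in a Q-manifold and to exploit the rich structure theory of $\mathcal{Z}$-sets. Every compact ANR embeds in $\mathcal{Q}$, and a standard perturbation argument (pushing the image off the ``end faces'' of the Hilbert cube factors using local contractibility of $V$) arranges the embedding so that $V$ is a $\mathcal{Z}$-set in $\mathcal{Q}$. A $\mathcal{Z}$-set in a Q-manifold admits a mapping cylinder neighborhood: there is an open neighborhood $N$ of $V$ in $\mathcal{Q}$ that is homeomorphic to the open mapping cylinder of some map $p\colon F \to V$ from a compact Q-manifold $F$. In particular $N$ is itself a Q-manifold, and the mapping-cylinder projection furnishes a surjection $N \to V$ with contractible (hence cell-like) point preimages. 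Taking the product with $\operatorname{id}_{\mathcal{Q}}$ gives a cell-like surjection from the Q-manifold $N \times \mathcal{Q}$ onto $V \times \mathcal{Q}$.

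The crux is then to conclude from the existence of such a cell-like surjection onto $V \times \mathcal{Q}$ that $V \times \mathcal{Q}$ is itself a Q-manifold. This is where Edwards' \emph{cell-like approximation theorem} enters: any cell-like map between Q-manifolds can be uniformly approximated by homeomorphisms. Applied here, it shows that $N \times \mathcal{Q}$ and $V \times \mathcal{Q}$ are in fact homeomorphic, so $V \times \mathcal{Q}$ inherits the Q-manifold structure of the left-hand side. The main obstacle is precisely this last step: the CE approximation theorem is a deep result of infinite-dimensional topology whose proof is far more difficult than the rest of the argument. The reduction from general ANRs to the compact ANR case is routine, and the production of a Q-manifold neighborhood of $V$ via $\mathcal{Z}$-set/mapping cylinder technology is standard, but the recognition principle that turns ``cell-like image of a Q-manifold'' into ``Q-manifold'' is genuinely hard and is the substance of Edwards' theorem.
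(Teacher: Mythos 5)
The paper offers no proof of this statement: it is quoted as a classical theorem of Edwards and used as a black box, so there is no argument of the paper's to compare yours against. Your sketch is a recognizable outline of one standard route through Hilbert cube manifold theory, but as written its decisive step is circular. The cell-like approximation theorem you invoke says that a cell-like map \emph{between Hilbert cube manifolds} can be approximated by homeomorphisms; to apply it to $N\times\mathcal{Q}\rightarrow V\times\mathcal{Q}$ you would already need to know that the target $V\times\mathcal{Q}$ is a Hilbert cube manifold, which is exactly the conclusion being sought. What is actually required is the strictly stronger statement---a cell-like map from a $\mathcal{Q}$-manifold \emph{onto an ANR} becomes a near-homeomorphism after crossing with $\mathcal{Q}$ (or, in later language, Toru\'{n}czyk's disjoint-cells recognition criterion)---and that statement essentially \emph{is} Edwards' theorem rather than a tool available for proving it. You do acknowledge at the end that the recognition principle is the hard part, but the specific theorem you cite does not apply in the form stated, and no non-circular substitute is identified.

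Two further points. The existence of a mapping cylinder neighborhood of an arbitrary compact ANR $\mathcal{Z}$-set in $\mathcal{Q}$ is not ``standard $\mathcal{Z}$-set technology'': for polyhedral or $\mathcal{Q}$-manifold $\mathcal{Z}$-sets it is, but for general compact ANRs it is itself a deep theorem whose usual proofs pass through the ANR theorem, so this step risks a second circularity. (A cell-like, or at least fine-homotopy-equivalence, resolution of $V$ by a $\mathcal{Q}$-manifold can be produced by more elementary means from the neighborhood retraction, but that is a different argument from the one you give.) Finally, in your reduction to the compact case, $U\cap\overline{B}_{\varepsilon}(a)$ is not compact when $A$ is embedded in $\mathbb{R}^{\infty}$---closed balls there are not compact---so $V=r\left(U\cap\overline{B}_{\varepsilon}(a)\right)$ is not visibly a compact ANR neighborhood; the localization must instead be routed through the local compactness of $A$ itself. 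The overall architecture (localize, resolve by a $\mathcal{Q}$-manifold, recognize) is the right shape, but each of its three stages needs a correct supporting statement.
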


\begin{theorem}
[Chapman, \cite{Ch}]\label{Theorem: Chapmans triangulation of HCMs}If $X$ is a
Hilbert cube manifold, then there is a locally finite polyhedron $K$ for which
$X\approx K\times\mathcal{Q}$.
\end{theorem}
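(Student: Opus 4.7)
The plan is a two-stage argument: first dispose of the compact case, then bootstrap to the noncompact one by exhaustion. For compact $X$, since $\mathcal{Q}$ is an ANR and being an ANR is a local property, any Hilbert cube manifold is itself an ANR; applying Theorem~\ref{Theorem: West's Theorem} yields a homotopy equivalence $h\colon K \to X$ with $K$ a finite polyhedron, and by Theorem~\ref{Theorem: Edwards HCM Theorem} the product $K \times \mathcal{Q}$ is itself a Hilbert cube manifold. Composing $h$ with the projection gives a homotopy equivalence between the compact Hilbert cube manifolds $K\times\mathcal{Q}$ and $X$, and the heart of the argument is to upgrade this to a homeomorphism. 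Following Chapman's ``fine homotopy equivalence'' philosophy, after absorbing a $\mathcal{Q}$-factor via the stability identity $\mathcal{Q}\times\mathcal{Q}\approx\mathcal{Q}$, any homotopy equivalence between compact Hilbert cube manifolds can be approximated arbitrarily closely by homeomorphisms. The mechanism proceeds inductively over a handle decomposition of $K$, deforming the map cell by cell to a $\mathcal{Z}$-embedding into $X$ and gluing local choices into an ambient homeomorphism by means of Anderson's extension theorem and the $\mathcal{Z}$-set unknotting theorem.

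For the general case, I would exhaust $X$ by compact Hilbert cube submanifolds $C_0\subset C_1\subset\cdots$ with $\bigcup_i C_i = X$, chosen so that the frontier $B_i$ of each $C_i$ in $X$ is itself a Hilbert cube manifold sitting as a $\mathcal{Z}$-set in $C_i$, with a collar $B_i\times[0,1)$ on the outside. Applying the compact case to each $C_i$ and to each $B_i$ produces triangulations $C_i\approx K_i\times\mathcal{Q}$ whose restriction over $B_i$ has the form $L_i\times\mathcal{Q}$. By the $\mathcal{Z}$-set unknotting theorem, the triangulations on consecutive $C_i$ can be isotoped to agree on the overlapping collars, and they then assemble into a global homeomorphism $X\approx K\times\mathcal{Q}$, where $K = \bigcup_i K_i$ is a locally finite polyhedron built by gluing the $K_i$ along the $L_i$.

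The principal obstacle lies squarely in the compact case: there is no elementary shortcut past the full $\mathcal{Z}$-set/unknotting apparatus. In effect, one needs the topological invariance of Whitehead torsion together with the fact that torsion vanishes upon crossing with $\mathcal{Q}$, which is precisely what makes $K\times\mathcal{Q}$ into a canonical model for the homotopy type of $X$. Once the compact result is granted, the noncompact case is a relatively routine exhaustion-and-glueing argument; the delicate analytic work has already been done.
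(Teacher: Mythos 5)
The paper does not prove this statement; it is quoted verbatim as a classical theorem from Chapman's CBMS lectures and used as a black box, so there is no internal argument to compare yours against. Judged on its own terms, your sketch contains a genuine gap at the step you yourself identify as the heart of the matter. The assertion that, after stabilizing by $\mathcal{Q}$, \emph{any} homotopy equivalence between compact Hilbert cube manifolds can be approximated by homeomorphisms is false. Chapman's classification theorem says that a homotopy equivalence between compact $\mathcal{Q}$-manifolds is homotopic to a homeomorphism if and only if it is a \emph{simple} homotopy equivalence, and Whitehead torsion is not killed by crossing with $\mathcal{Q}$ --- on the contrary, its persistence under $-\times\mathcal{Q}$ is exactly how Chapman proved the topological invariance of torsion. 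Concretely, if $L$ and $L'$ are homotopy equivalent lens spaces that are not simple homotopy equivalent, then $L\times\mathcal{Q}$ and $L'\times\mathcal{Q}$ are homotopy equivalent compact $\mathcal{Q}$-manifolds that are \emph{not} homeomorphic. Consequently, the finite complex $K$ handed to you by Theorem~\ref{Theorem: West's Theorem} need not satisfy $K\times\mathcal{Q}\approx X$: the theorem claims only that \emph{some} $K$ works, and the obstruction to your particular choice is precisely the torsion of $h$, which you cannot assume vanishes. Your closing sentence, ``torsion vanishes upon crossing with $\mathcal{Q}$,'' is the same error restated.

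There is also a circularity problem: West's theorem postdates the triangulation theorem and its proof leans on $\mathcal{Q}$-manifold machinery (including triangulation), so it cannot honestly be used as an input here. Chapman's actual argument produces the polyhedron and the homeomorphism simultaneously, by a handle-straightening scheme modeled on the Kirby--Siebenmann torus trick, rather than by upgrading an abstract homotopy equivalence. Your noncompact exhaustion-and-gluing step (collared compact pieces, $\mathcal{Z}$-set unknotting on the overlaps) is in the right spirit and does reflect how the open case is reduced to the compact one, but it inherits the gap, since it invokes the compact case for each $C_i$ and each $B_i$.
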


\subsection{$\mathcal{Z}$-sets and $\mathcal{Z}$%
-com\-pact\-ific\-at\-ions\label{Subsection: Z-sets}}

A closed subset $A$ of an ANR\ $Y$ is a $\mathcal{Z}$\emph{-set} if either of
the following equivalent conditions is satisfied:

\begin{itemize}
\item There exists a homotopy $H:Y\times\left[  0,1\right]  \rightarrow Y$
such that $H_{0}=\operatorname*{id}_{Y}$ and $H_{t}\left(  X\right)  \subseteq
Y-A$ for all $t>0$. (We say that $H$ \emph{instantly homotopes} $Y$ off from
$A$.)

\item For every open set $U$ in $Y$, $U-A\hookrightarrow U$ is a homotopy equivalence.
\end{itemize}

\noindent A $\mathcal{Z}$\emph{-com\-pact\-ific\-at\-ion} of a space $X$ is a
com\-pact\-ific\-at\-ion $\overline{X}=X\sqcup Z$ with the property that $Z$
is a $\mathcal{Z}$-set in $\overline{X}$. In this case, $Z$ is called a
$\mathcal{Z}$\emph{-boundary }for $X$. Implicit in this definition is the
requirement that $\overline{X}$ be an ANR; moreover, since an open subset of
an ANR is an ANR, $X$ itself must be an ANR to be a candidate for
$\mathcal{Z}$-com\-pact\-ific\-at\-ion. Hanner's Theorem \cite{Ha} ensures
that every com\-pact\-ific\-at\-ion $\overline{X}$ of an ANR $X$, for which
$\overline{X}-X$ satisfies either of the \textquotedblleft negligibility
conditions\textquotedblright\ in the definition of $\mathcal{Z}$-set, is
necessarily an ANR; hence, it is a $\mathcal{Z}$-com\-pact\-ific\-at\-ion. By
a similar (but much easier) result in dimension theory, $\mathcal{Z}%
$-com\-pact\-ific\-at\-ion does not increase dimension; so, if $X$ is an ENR,
so is $\overline{X}$.

\begin{example}
The com\-pact\-ific\-at\-ion of $%
\mathbb{R}
^{n}$ obtained by adding the $\left(  n-1\right)  $-sphere at infinity is the
prototypical $\mathcal{Z}$-com\-pact\-ific\-at\-ion. More generally, addition
of the visual boundary to a proper CAT(0) space is a $\mathcal{Z}%
$-com\-pact\-ific\-at\-ion. In \cite{BM}, it is shown that, for a torsion-free
$\delta$-hyperbolic group $G$, an appropriately chosen Rips complex can be
$\mathcal{Z}$-compactified by adding the Gromov boundary $\partial G$.
\end{example}

The purely topological question of when an ANR, an ENR, or even a locally
finite polyhedron admits a $\mathcal{Z}$-com\-pact\-ific\-at\-ion is an open
question (see \cite{Gu1}). However, Chapman and Siebenmann \cite{CS} have
given a complete classification of $\mathcal{Z}$-compactifiability for Hilbert
cube manifolds. That result, in combination with Theorem
\ref{Theorem: Edwards HCM Theorem}, has substantial implications for the
general case.

Here we provide a slightly simplified version of the Chapman-Siebenmann
theorem. We state the result only for 1-ended Hilbert cube manifolds $X$,
since that is all we need in this paper. We also simplify the definitions of
$\sigma_{\infty}\left(  X\right)  $ and $\tau_{\infty}\left(  X\right)  $ by
basing them on a prechosen nested, cofinal sequence of nice neighborhoods of
infinity. It is true, but would take some time, to explain why the resulting
obstructions do not depend on that choice.

A particularly nice variety of closed neighborhood of infinity $N\subseteq X$
is one that is, itself, a Hilbert cube manifold and whose topological boundary
$\operatorname*{Bd}_{X}N$ is a Hilbert cube manifold with a neighborhood in
$X$ homeomorphic to $\operatorname*{Bd}_{X}N\times\lbrack-1,1]$. Call such
neighborhoods of infinity \emph{clean}. By applying Theorems
\ref{Theorem: Edwards HCM Theorem} and
\ref{Theorem: Chapmans triangulation of HCMs}, clean neighborhoods of infinity
are easily found.

\begin{theorem}
[Chapman-Siebenmann]\label{CS Theorem}Let $X$ be a 1-ended Hilbert cube
manifold and $\left\{  N_{i}\right\}  $ a nested cofinal sequence of connected
clean neighborhoods of infinity. Then $X$ admits a $\mathcal{Z}$%
-com\-pact\-ific\-at\-ion if and only if each of the following conditions holds:

\begin{enumerate}
\item[a)] $X$ is inward tame.

\item[b)] $\sigma_{\infty}(X)\in\underleftarrow{\lim}\left\{  \widetilde
{K}_{0}(%
\mathbb{Z}
\pi_{1}(N_{i})),\lambda_{i\ast}\right\}  $ is zero.

\item[c)] $\tau_{\infty}\left(  X\right)  \in\underleftarrow{\lim}^{1}\left\{
\operatorname*{Wh}(\pi_{1}(N_{i})),\lambda_{i\ast}\right\}  $ is zero.
\end{enumerate}
\end{theorem}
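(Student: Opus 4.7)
The plan is to tackle the two directions separately. For the necessity direction, which is comparatively routine, I would start from a $\mathcal{Z}$-compactification $\overline{X} = X \sqcup Z$ and use a nested cofinal sequence of open neighborhoods of $Z$ in $\overline{X}$ to produce clean cofinal neighborhoods $\{N_i\}$ of infinity in $X$. Since $Z$ is a $\mathcal{Z}$-set, each inclusion $N_i \hookrightarrow N_i \cup Z$ is a homotopy equivalence onto a compact ANR, so by \thmref{Theorem: West's Theorem} each $N_i$ has finite homotopy type; in particular $X$ is inward tame. The Wall obstructions of the $N_i$ individually vanish, and a compatibility check shows they assemble into the zero element of the inverse limit, giving $\sigma_\infty(X) = 0$. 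An analogous analysis of the bonding inclusions, using that they pull back to simple homotopy equivalences between the compact ANRs $N_i \cup Z$ (a standard consequence of the $\mathcal{Z}$-set structure), gives $\tau_\infty(X) = 0$.

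For sufficiency---the substantial direction---the plan is to build an explicit polyhedral telescopic model of $X$ near infinity and then appeal to a homeomorphism theorem for Hilbert cube manifolds. First, inward tameness together with $\sigma_\infty(X) = 0$ lets one pass to a subsequence of $\{N_i\}$ in which each $N_i$ has \emph{finite} homotopy type rather than merely being finitely dominated; this is where the inverse-limit form of Wall's obstruction enters. By \thmref{Theorem: Chapmans triangulation of HCMs}, write each $N_i \approx K_i \times \mathcal{Q}$ for a finite polyhedron $K_i$, and replace the inclusion $N_{i+1} \hookrightarrow N_i$ by a homotopy-equivalent polyhedral map $f_i : K_{i+1} \to K_i$. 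Assemble these into an infinite one-sided mapping telescope $T = K_0 \cup \mathcal{M}_{[0,1]}(f_0) \cup \mathcal{M}_{[1,2]}(f_1) \cup \cdots$. Then $T \times \mathcal{Q}$ is a Hilbert cube manifold by \thmref{Theorem: Edwards HCM Theorem}, and admits a natural $\mathcal{Z}$-compactification by adjoining the inverse limit space $\underleftarrow{\lim}\{K_i, f_i\}$ at its end. The goal is to show that $T \times \mathcal{Q}$ is homeomorphic to a clean neighborhood of infinity in $X$; that homeomorphism then transports the $\mathcal{Z}$-compactification back to $X$.

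The main obstacle is controlling the Whitehead torsions of the bonding maps $f_i$, which is exactly where the hypothesis $\tau_\infty(X) = 0$ is needed. The polyhedral model $T$ is only well-defined up to $\operatorname{Wh}$-indexed choices of bonding maps, and the natural receptacle for the resulting ambiguities is precisely $\underleftarrow{\lim}^1\{\operatorname{Wh}(\pi_1(N_i))\}$. Vanishing of $\tau_\infty$ allows one, after a further passage to subsequences and adjustment of the $f_i$ by homotopies, to arrange that all bonding maps are simple homotopy equivalences; only then does $T \times \mathcal{Q}$ actually recover the proper homotopy type of $X$ near infinity. The final step---upgrading the resulting proper homotopy equivalence to a genuine homeomorphism between $T \times \mathcal{Q}$ and a neighborhood of infinity---is precisely the problem that Chapman's infinite-dimensional $s$-cobordism and homeomorphism machinery for Hilbert cube manifolds was designed to solve, and together with the torsion bookkeeping it constitutes the technical core of the argument.
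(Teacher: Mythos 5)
The paper does not prove this statement: Theorem~\ref{CS Theorem} is quoted verbatim (in slightly simplified, $1$-ended form) from Chapman--Siebenmann \cite{CS}, and the author explicitly defers all details to \cite{CS} and to \cite[\S 8.2]{Gu3}. So there is no in-paper proof to compare against; your proposal can only be measured against the original argument. Judged that way, your outline is broadly faithful to the actual Chapman--Siebenmann strategy: necessity via the homotopy negligibility of the $\mathcal{Z}$-set (each clean $N_i$ is homotopy equivalent to the compact ANR $N_i\cup Z$, hence of finite type), and sufficiency via a polyhedral inverse mapping telescope compactified by adjoining the inverse limit, with the final homeomorphism supplied by Chapman's and Siebenmann's infinite simple homotopy machinery for Hilbert cube manifolds. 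As a roadmap this is sound, though of course the two hard ingredients (Siebenmann's theory of infinite simple homotopy types and Chapman's theorem that infinite simple proper homotopy equivalences of $\mathcal{Q}$-manifolds are properly homotopic to homeomorphisms) are entirely deferred.

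One concrete point is misstated. You write that vanishing of $\tau_\infty$ lets you ``arrange that all bonding maps $f_i:K_{i+1}\to K_i$ are simple homotopy equivalences.'' In general the inclusions $N_{i+1}\hookrightarrow N_i$ are not homotopy equivalences at all --- inward tameness does not imply stability of the fundamental group at infinity --- so the $f_i$ cannot be made (simple) homotopy equivalences, and the telescope genuinely has non-invertible bonds. The torsion obstruction instead lives, as the paper itself remarks, in the inclusions $\operatorname{Bd}_X N_i\hookrightarrow\overline{N_i-N_{i+1}}$ after the $N_i$ have been improved so that \emph{those} are homotopy equivalences; equivalently, $\tau_\infty\in\underleftarrow{\lim}^1\{\operatorname{Wh}(\pi_1(N_i))\}$ is the obstruction to choosing the comparison between the end of $X$ and the telescope model to be an \emph{infinite} simple homotopy equivalence. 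With that correction your sketch aligns with \cite{CS}; without it, the sufficiency argument as written would only apply when the end is stable, which is far from the generality the theorem requires.
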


The inverse sequences $\left\{  \widetilde{K}_{0}(%
\mathbb{Z}
\pi_{1}(N_{i})),\lambda_{i\ast}\right\}  $ and $\left\{  \operatorname*{Wh}%
(\pi_{1}(N_{i})),\lambda_{i\ast}\right\}  $ in conditions b) and c) are
obtained by applying the $\widetilde{K}_{0}$-functor and the
$\operatorname*{Wh}$-functor to sequence (\ref{Defn: pro-pi1}). The
obstruction $\sigma_{\infty}(X)$ is just the sequence $\left(  \sigma\left(
N_{0}\right)  ,\sigma\left(  N_{1}\right)  ,\sigma\left(  N_{2}\right)
,\cdots\right)  $ of Wall finiteness obstructions of the $N_{i}$. Condition a)
ensures that each $N_{i}$ is finitely dominated, so the individual
obstructions are all defined; without condition a), there is no condition b).
Similarly, condition c) requires condition b). It is related to the Whitehead
torsion of inclusions $\operatorname*{Bd}_{X}N_{i}\hookrightarrow
\overline{N_{i}-N_{i+1}}$, after the $N_{i}$ have been improved significantly
so that those inclusions are homotopy equivalences. The reader should consult
\cite{CS} for details or \cite[\S 8.2]{Gu3} for a less formal discussion of
Theorem \ref{CS Theorem}. For our purposes, those details are not so important
since the obstructions arising here will be shown to vanish by straightforward
applications of Theorem \ref{Theorem: Bass-Heller-Swan}.

\begin{remark}
\emph{Condition a) makes sense for an arbitrary ANR }$X$\emph{. If }$X$\emph{
satisfies a) and is sharp at infinity, then condition b) also makes immediate
sense; it is satisfied if and only if }$X$\emph{ contains arbitrarily small
closed ANR neighborhoods of infinity having finite homotopy type. Condition c)
is more problematic; even when a) and b) are satisfied, if }$X$\emph{ is not a
Hilbert cube manifold, it may be impossible to find neighborhoods of infinity
}$N_{i}$\emph{ with each }$\operatorname*{Bd}_{X}N_{i}\hookrightarrow
\overline{N_{i}-N_{i+1}}$\emph{ a homotopy equivalence---an example can be
found in \cite{GT}. The solution to this problem is to} define \emph{the
obstructions for an ANR }$X$\emph{ to be the corresponding obstructions for
the Hilbert cube manifold} $X\times\mathcal{Q}$. \emph{Then a)-c) are
necessary for }$\mathcal{Z}$\emph{-compactifiability of }$X$\emph{;
unfortunately, they are not sufficient. \cite{Gu1} exhibits a locally finite
2-dimensional polyhedron }$K$\emph{ that satisfies a)-c), but is not
}$\mathcal{Z}$\emph{-com\-pact\-ifi\-able. A suitable characterization of
}$\mathcal{Z}$\emph{-com\-pact\-ifi\-able ANRs is an open question.}
\end{remark}

For an ANR $X$, Theorem \ref{CS Theorem} allows us to determine whether
$X\times\mathcal{Q}$ is $\mathcal{Z}$-com\-pact\-ifi\-able. The following
result, with $\mathbb{I=}\left[  -1,1\right]  $, frequently allows us to
restore finite-dimensionality.

\begin{theorem}
[Ferry, \cite{Fe}]\label{Theorem: Ferry's stabilization theorem}If $K$ is a
finite-dimensional locally finite polyhedron and $K\times\mathcal{Q}$ is
$\mathcal{Z}$-com\-pact\-ifi\-able, then $K\times\mathbb{I}^{2\cdot\dim K+5}$
is $\mathcal{Z}$-com\-pact\-ifi\-able.
\end{theorem}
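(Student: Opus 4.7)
The plan is to pass from the Hilbert cube setting back into the polyhedral category by way of Chapman's triangulation theorem, extract a finite-dimensional polyhedral model of the compactification whose dimension is controlled linearly by $\dim K$, and then apply classical general position to $\mathcal{Z}$-embed that model in a finite-dimensional cube.

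Write $\overline{X}=(K\times\mathcal{Q})\sqcup Z$ for the given $\mathcal{Z}$-compactification. Since $K\times\mathcal{Q}$ is a Hilbert cube manifold (Theorem \ref{Theorem: Edwards HCM Theorem}) and $Z$ is a $\mathcal{Z}$-set in the compact ANR $\overline{X}$, after a single stabilization by $\mathcal{Q}$ the space $\overline{X}\times\mathcal{Q}$ is a compact Hilbert cube manifold in which $Z\times\mathcal{Q}$ is again a $\mathcal{Z}$-set. By Chapman's triangulation theorem (Theorem \ref{Theorem: Chapmans triangulation of HCMs}) there is a compact polyhedron $L$ and a homeomorphism $\overline{X}\times\mathcal{Q}\approx L\times\mathcal{Q}$, and under this identification $K\times\mathcal{Q}$ corresponds to the complement in $L\times\mathcal{Q}$ of a $\mathcal{Z}$-set.

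Next I would cut $L$ down to a subpolyhedron $P$ whose dimension is bounded by $\dim K+c$ for a small absolute constant $c$, while retaining the property that $P\times\mathcal{Q}$ still witnesses a $\mathcal{Z}$-compactification of (a space homeomorphic to) $K\times\mathcal{Q}$. Morally, $P$ is the smallest polyhedral neighborhood of the image of $K$ in $L$ together with the boundary slice, and the reduction is accomplished by collapsing redundant cells in $L$ using Hilbert cube manifold engulfing techniques. Finally, invoke the classical general-position fact that a compact polyhedron of dimension $d$ embeds as a $\mathcal{Z}$-set in the cube $\mathbb{I}^{2d+1}$; applied to $P$ with $d\leq\dim K+2$, this yields a $\mathcal{Z}$-embedding into $\mathbb{I}^{2\dim K+5}$ and hence, after checking ANR status via Hanner's theorem, a $\mathcal{Z}$-compactification of $K\times\mathbb{I}^{2\dim K+5}$.

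The main obstacle is the middle step, namely the dimension reduction from the possibly large polyhedron $L$ coming out of Chapman's theorem down to a subpolyhedron $P$ of dimension linear in $\dim K$; pinning down the additive constant so that the arithmetic produces exactly $2\dim K+5$ is where Ferry's technical work in \cite{Fe} is essential, whereas the outer two steps (Chapman's triangulation and polyhedral general-position $\mathcal{Z}$-embedding) are comparatively standard tools.
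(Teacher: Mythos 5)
First, a point of calibration: the paper does not prove this statement at all --- it is quoted verbatim from Ferry's paper \cite{Fe} and used as a black box, so there is no internal proof to compare your sketch against. Judged on its own terms, your outline has the right opening move (stabilize by $\mathcal{Q}$ so that the compactification becomes a compact Hilbert cube manifold, then triangulate via Chapman's theorem to get $\overline{X}\times\mathcal{Q}\approx L\times\mathcal{Q}$), but the two steps that follow contain genuine gaps, and between them they account for essentially all of the content of the theorem.

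The first gap is the ``dimension reduction.'' Chapman's theorem gives you \emph{some} compact polyhedron $L$, with no control whatsoever on $\dim L$; the assertion that $L$ contains a subpolyhedron $P$ with $\dim P\leq\dim K+c$ that ``still witnesses'' the compactification is exactly the theorem being proved, and ``collapsing redundant cells using Hilbert cube manifold engulfing techniques'' is a placeholder, not an argument. (Note also that the homotopy type of $L$ is that of $\overline{X}$, not of $K$, so even the homotopy-theoretic dimension of a spine of $L$ is not obviously related to $\dim K$ without further work.) The second gap is a non sequitur in the conclusion: a $\mathcal{Z}$-set embedding of a \emph{compact} polyhedron $P$ into $\mathbb{I}^{2d+1}$ is not a $\mathcal{Z}$-compactification of the \emph{noncompact} space $K\times\mathbb{I}^{2\dim K+5}$. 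What the theorem requires is a compact ANR $\widehat{W}$ containing a copy of $K\times\mathbb{I}^{2\dim K+5}$ as an open dense subset whose complement is a $\mathcal{Z}$-set in $\widehat{W}$; nothing in your final step produces such a $\widehat{W}$, identifies its interior with $K\times\mathbb{I}^{2\dim K+5}$, or explains where the two extra stabilizing coordinates (from $2\dim K+3$ to $2\dim K+5$) are spent. Ferry's actual argument is a delicate controlled/engulfing construction in which the exponent $2\cdot\dim K+5$ emerges from general position inside the compactified Hilbert cube manifold; if you want to use this theorem you should cite \cite{Fe} rather than attempt to reconstruct it, as the present paper does.
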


\section{Topological results\label{Section: Topological results}}

In this section we prove a variety of topological results that are primary
ingredients in the proofs of our main theorems. We have broken the section
into three parts: the first contains results about product spaces; the second
deals with spaces that admit a proper $%
\mathbb{Z}
$-action generated by a homeomorphism properly homotopic to the identity; and
the third looks at spaces that are simply connected at infinity.

\subsection{Products of noncompact spaces}

\begin{lemma}
\label{Lemma: f.d. cross R is inward tame}Let $X$ be an ANR that is finitely
dominated. Then $X\times%
\mathbb{R}
$ is inward tame.
\end{lemma}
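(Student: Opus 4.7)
The plan is to verify inward tameness of $X\times\mathbb{R}$ via the Borsuk-HEP reformulation in the finite-domination / inward-tameness subsection: it suffices to exhibit a cofinal sequence $\{N_{n}\}$ of closed neighborhoods of infinity in $X\times\mathbb{R}$, each admitting a self-homotopy ending in a compact subset. Finite domination of $X$ combined with the ANR hypothesis yields a homotopy $H\colon X\times[0,1]\to X$ with $H_{0}=\operatorname{id}_{X}$ and $C:=\overline{H_{1}(X)}$ compact.

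Fix a compact exhaustion $C\subseteq V_{1}\subseteq V_{2}\subseteq\cdots\subseteq X$ with $V_{n}\subseteq\operatorname{int}V_{n+1}$ and $\bigcup V_{n}=X$, and set
\[
N_{n}:=(X\times\mathbb{R})\setminus(\operatorname{int}V_{n}\times(-n,n)),
\]
a cofinal decreasing sequence of closed neighborhoods of infinity. I would pull $N_{n}$ into the compact set $C\times\{-(n+1),n+1\}\subseteq N_{n}$ in three stages: (i)~slide the $\mathbb{R}$-coordinate of each $(x,t)\in N_{n}$ outward until $|t|\geq n+1$; (ii)~apply $H_{s}\times\operatorname{id}_{\mathbb{R}}$ to pull the $X$-coordinate into $C$ (this stays in $N_{n}$ throughout because $|t|\geq n+1>n$); (iii)~contract the $\mathbb{R}$-coordinate onto $\{\pm(n+1)\}$, producing the desired compact image.

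The main technical point is the continuity of stage~(i). For $x\in V_{n}$ the fiber $(\{x\}\times\mathbb{R})\cap N_{n}$ forces $|t|\geq n$ and splits into two disjoint sign-separated rays, along which a sign-consistent outward push is canonically defined. For $x\notin V_{n}$ the whole fiber $\{x\}\times\mathbb{R}$ lies in $N_{n}$, which offers freedom in the slide but introduces a sign ambiguity near $t=0$. I expect to arrange the interpolation via a partition of unity $\beta\colon X\to[0,1]$ with $\beta|_{V_{n}}=0$ and $\beta|_{X\setminus V_{n+1}}=1$, combined with an auxiliary pre-move using $H$ on the $X$-coordinate (restricted to the region where $|t|$ is bounded away from zero) to decouple the sign choice from the transition across $\partial V_{n}$. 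Stages~(ii) and~(iii) are then routine, and the composite self-homotopy establishes inward tameness of $X\times\mathbb{R}$.
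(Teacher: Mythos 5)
Your framework (rectangular neighborhoods $N_{n}$, then a self-homotopy with compact terminal image) is the right one, and stages~(ii) and~(iii) are fine, but the difficulty you flag in stage~(i) is not a technicality to be smoothed over with a partition of unity---it is the entire content of the lemma, and the sketch contains no idea that resolves it. Concretely, the ``equatorial'' slice $(X\setminus\operatorname{int}V_{n})\times\{0\}$ is a noncompact subset of $N_{n}$ whose points must reach a compact set along tracks staying in $N_{n}$. The only two mechanisms available are: (a) sliding the $\mathbb{R}$-coordinate outward, which requires a continuous choice of sign that cannot be made at $t=0$ (any continuous interpolation between ``up'' and ``down'' leaves, by the intermediate value theorem, a seam of points that never escape $|t|\leq n$); and (b) applying $H_{s}\times\operatorname{id}_{\mathbb{R}}$, which is forbidden there because $H$ may drag a point of $X\setminus V_{n}$ into $\operatorname{int}V_{n}$ while $|t|<n$, leaving $N_{n}$. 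The only remaining option would be to pull $(X\setminus\operatorname{int}V_{n})\times\{0\}$ into a compact set while keeping its $X$-coordinate in $X\setminus\operatorname{int}V_{n}$---but that is precisely inward tameness of $X$ itself, which is not assumed and is false for some finitely dominated (even contractible) $X$; indeed, if $X$ were movably finitely dominated the paper's separate product lemma would apply, and the whole point of the present lemma is that it is not. Your proposed ``auxiliary pre-move using $H$ where $|t|$ is bounded away from zero'' never touches the points at $t=0$. A visible symptom of the same problem: for $X$ connected and noncompact, $N_{n}$ is connected, so the connected set $K_{1}(N_{n})$ cannot lie in the disconnected target $C\times\{-(n+1),\,n+1\}$ while meeting both pieces, which it must under your stages (i)--(iii); any correct compact target has to contain a bridge running around the deleted box, and producing the homotopy onto such a set is where the work lies.

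For comparison, the paper does not attempt this direct construction at all: it reduces to the case of a locally finite polyhedron using Theorem~\ref{Theorem: West's Theorem} together with the proper homotopy invariance of inward tameness, and then defers to the proof of Proposition~3.1 of \cite{Gu2}, which is where the two ends of the $\mathbb{R}$-factor are actually reconciled. If you want a self-contained argument, you need to supply the missing idea for the equatorial slice (or an argument that the neighborhoods $N_{n}\simeq X\cup_{Y}(Y\times[-1,1])\cup_{Y}X$, $Y=X\setminus\operatorname{int}V_{n}$, are finitely dominated); as written, the proposal proves the much easier statement that $X\times[0,\infty)$ can be pushed off any such neighborhood, not the lemma.
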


\begin{proof}
Since inward tameness is an invariant of proper homotopy type, we may use
Theorem \ref{Theorem: West's Theorem} to reduce to the case that $X$ is a
locally finite polyhedron. For that case, the proof given in \cite[Prop.
3.1]{Gu2} for open manifolds is valid, with only minor modifications. With a
few additional modifications, the appeal to Theorem
\ref{Theorem: West's Theorem} can be eliminated.
\end{proof}

The next lemma requires a new definition. We say that an ANR $X$ is
\emph{movably finitely dominated} if, for every neighborhood of infinity
$N\subseteq X$, there is a self-homotopy of $X$ that pulls $X$ into a compact
subset of $N$, i.e., $H:X\times\lbrack0,1]\rightarrow X$ such that
$H_{0}=\operatorname*{id}_{X}$ and $\overline{H_{1}\left(  X\right)  }$ is
compact and contained in $N$. The motivation for this definition becomes
immediately clear in the following lemma. The most important examples are the
simplest---every contractible ANR is movably finitely dominated, since it is
dominated by each singleton subset.

\begin{lemma}
\label{Lemma: inward tameness of products}Let $X$ and $Y$ be connected,
noncompact, movably finitely dominated ANRs. Then $X\times Y$ is inward tame.
\end{lemma}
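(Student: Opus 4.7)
My plan is to verify inward tameness of $X\times Y$ by exhibiting a cofinal sequence $\{N_i\}$ of closed neighborhoods of infinity each of which can be pulled into a compact subset by a self-homotopy. Choose compact exhaustions $A_1\subseteq A_2\subseteq\cdots$ of $X$ and $B_1\subseteq B_2\subseteq\cdots$ of $Y$ with $A_i\subseteq\operatorname{int} A_{i+1}$ and similarly for $B_i$, and set $N_i:=(X\times Y)\setminus(\operatorname{int} A_i\times\operatorname{int} B_i)$. Then $N_i=U_i\cup V_i$ where $U_i:=(X\setminus\operatorname{int} A_i)\times Y$ and $V_i:=X\times(Y\setminus\operatorname{int} B_i)$. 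By movable finite domination applied to the neighborhoods of infinity $X\setminus A_{i+1}$ and $Y\setminus B_{i+1}$, fix homotopies $H^X$, $H^Y$ with $H^X_0=\operatorname{id}_X$, $H^Y_0=\operatorname{id}_Y$, $K^X:=\overline{H^X_1(X)}$ compact in $X\setminus A_{i+1}$, and $K^Y:=\overline{H^Y_1(Y)}$ compact in $Y\setminus B_{i+1}$, so that $K^X\times K^Y\subseteq N_i$.

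The core construction is a homotopy $\Phi:N_i\times[0,1]\to N_i$ from $\operatorname{id}_{N_i}$ into $K^X\times K^Y$. On $U_i$ one can use a two-stage ``$Y$-then-$X$'' homotopy: apply $\operatorname{id}\times H^Y$ first, which stays in $U_i\subseteq N_i$ because the $X$-coordinate remains outside $\operatorname{int} A_i$; then apply $H^X\times\operatorname{id}$, which stays in $V_i\subseteq N_i$ because the $Y$-coordinate now lies in $K^Y\subseteq Y\setminus\operatorname{int} B_i$. Symmetrically on $V_i$, an ``$X$-then-$Y$'' protocol stays in $N_i$. Both protocols begin at the identity and terminate at $(x,y)\mapsto(H^X_1(x),H^Y_1(y))\in K^X\times K^Y$.

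The remaining task is to glue these into a single continuous homotopy on all of $N_i$, reconciling the two protocols over the overlap $U_i\cap V_i$. I view each protocol as a path $\gamma_U,\gamma_V:[0,1]\to[0,1]^2$ from $(0,0)$ to $(1,1)$ in the parameter square for the two-variable map $F((x,y),s,u):=(H^X_s(x),H^Y_u(y))$, with $\gamma_U$ routed through $(0,1)$ and $\gamma_V$ through $(1,0)$. For each $(x,y)\in N_i$, the ``bad zone'' $S(x)\times T(y):=\{(s,u):H^X_s(x)\in\operatorname{int} A_i\text{ and }H^Y_u(y)\in\operatorname{int} B_i\}$ is a closed subset of $[0,1)^2$ that avoids $(0,0)$ precisely because $(x,y)\in N_i$, and also avoids $\{s=1\}\cup\{u=1\}$. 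Using a partition of unity $\phi_U+\phi_V\equiv 1$ subordinate to $\{U_i,V_i\}$, I continuously select for each $(x,y)$ an interpolating path $\gamma_{(x,y)}$ avoiding $S(x)\times T(y)$, obtained by routing along the safe boundary segments of $[0,1]^2$, and set $\Phi_t(x,y):=F((x,y),\gamma_{(x,y)}(t))$.

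The main obstacle is the continuous selection of paths $\gamma_{(x,y)}$ across the overlap, since the bad zone depends on $(x,y)$. I expect this to be controlled by keeping the interpolating paths close to $\partial[0,1]^2$---the segments $\{s=0\}$, $\{u=0\}$, $\{s=1\}$, $\{u=1\}$---where the bad zone, being contained in $[0,1)^2$ and bounded away from the ``far'' edges, cannot reach. The ANR structure of $N_i$ (arranged by choosing the $A_i$ and $B_i$ to have suitably nice boundaries) supports the needed continuity, and the resulting $\Phi$ pulls $N_i$ into the compact subset $K^X\times K^Y\subseteq N_i$, so $X\times Y$ is inward tame.
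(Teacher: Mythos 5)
Your setup matches the paper's: the same rectangular neighborhoods of infinity, the same use of movable finite domination to produce compressing homotopies $H^X$, $H^Y$ landing in compacta beyond $A_{i+1}$, $B_{i+1}$, and the same identification of the real difficulty, namely reconciling the ``$Y$-then-$X$'' protocol on $U_i$ with the ``$X$-then-$Y$'' protocol on $V_i$ over the overlap. But the step you flag as ``the main obstacle'' is exactly the step you do not carry out, and the heuristics you offer for it do not hold up. First, the bad zone $S(x)\times T(y)$ is a product of \emph{open} subsets of $[0,1)$, and although it misses $(0,0)$ and the far edges $\{s=1\}\cup\{u=1\}$, the sets $S(x)$ and $T(y)$ can accumulate at $0$ (the homotopy $H^X$ may repeatedly push $x$ into $\operatorname{int}A_i$ at times arbitrarily close to $0$ before finally expelling it). So ``routing along the safe boundary segments'' is not safe: the edge $\{0\}\times[0,1]$ itself avoids the bad zone, but no neighborhood of it need do so, and any selection that perturbs off that edge can fail. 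Second, a partition-of-unity interpolation between the path through $(0,1)$ and the path through $(1,0)$ passes through the interior of the square (e.g., near $(1/2,1/2)$ when $\phi_U=\phi_V=1/2$), and for a point of $U_i\cap V_i$ nothing prevents $H^X_{1/2}(x)\in\operatorname{int}A_i$ and $H^Y_{1/2}(y)\in\operatorname{int}B_i$ simultaneously. So the continuous selection of admissible paths $\gamma_{(x,y)}$ is a genuine unproved claim, not a routine verification.

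The paper avoids the selection problem altogether with a reparametrization trick: choose slightly larger compacta $A'$, $B'$ and Urysohn functions $\lambda$, $\mu$ vanishing on $A$, $B$ and equal to $1$ outside $A'$, $B'$, and set $\widehat F_t(x,y)=\bigl(H^X_{\mu(y)t}(x),\,y\bigr)$, $\widehat G_t(x,y)=\bigl(x,\,H^Y_{\lambda(x)t}(y)\bigr)$. The modulation by $\mu(y)$ means the $X$-coordinate of a point is never moved unless its $Y$-coordinate already lies outside $B$, so tracks of points of $N$ stay in $N$ automatically, with no case analysis over the overlap; and $\widehat F_1$ pushes everything into $(X\times B')\cup(K\times Y)$. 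The concatenation $\widehat F$, then $\widehat G\circ\widehat F_1$, then $\widehat F\circ\widehat G_1\circ\widehat F_1$ lands in a compact set. I would recommend replacing your path-selection scheme with this (or an equivalent explicit) device; as written, your argument is incomplete at its central step.
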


\begin{proof}
Let $A\subseteq X$ and $B\subseteq Y$ be compact and $N=\overline{(X\times
Y)-(A\times B)}$ the corresponding closed neighborhood of infinity. It
suffices to prove:\smallskip

\noindent\textbf{Claim. }\emph{There exits a homotopy }$J:N\times\left[
0,1\right]  \rightarrow N$\emph{ with }$J_{0}=\operatorname*{id}_{N}$\emph{
and }$\overline{J_{1}\left(  N\right)  }$\emph{ compact.}\smallskip

Choose compacta $A^{\prime}\subseteq X$ and $B^{\prime}\subseteq Y$ such that
$A\subseteq\operatorname*{int}_{X}A^{\prime}$ and $B\subseteq
\operatorname*{int}_{X}B^{\prime}$, and let $\lambda:X\rightarrow\left[
0,1\right]  $ and $\mu:Y\rightarrow\left[  0,1\right]  $ be Urysohn functions
with $\lambda\left(  A\right)  =0=\mu\left(  B\right)  $ and $\lambda\left(
\overline{X-A^{\prime}}\right)  =1=\mu\left(  \overline{Y-B^{\prime}}\right)
$. Then choose compact $K\subseteq X-A^{\prime}$ and $L\subseteq Y-B^{\prime}$
along with homotopies $F:X\times\left[  0,1\right]  \rightarrow X$ such that
$F_{0}=\operatorname*{id}_{X}$ and $F_{1}\left(  X\right)  \subseteq K$ and
$G:Y\times\left[  0,1\right]  \rightarrow Y$ such that $G_{0}%
=\operatorname*{id}_{Y}$ and $G_{1}\left(  X\right)  \subseteq L$.

We will build a homotopy $H$ that pulls $X\times Y$ into a compact subset
while fixing $A\times B$. By arranging that tracks of points from $N$ stay in
$N$, the restriction of this homotopy will satisfy the claim.

Define $\widehat{F}:X\times Y\times\left[  0,1\right]  \rightarrow X\times Y$
by $\widehat{F}\left(  x,y,t\right)  =\left(  F\left(  x,\mu\left(  y\right)
\cdot t\right)  ,y\right)  $ and note that:

\begin{itemize}
\item $\widehat{F}_{1}\left(  X\times Y\right)  \subseteq(X\times B^{\prime
})\cup\left(  K\times Y\right)  $,

\item $\left.  \widehat{F}_{t}\right\vert _{X\times B}=\operatorname*{id}$ for
all $t$, and

\item tracks of points in $N$ stay in $N$.
\end{itemize}

Similarly, let $\widehat{G}:X\times Y\times\left[  0,1\right]  \rightarrow
X\times Y$ by $\widehat{G}\left(  x,y,t\right)  =\left(  x,G\left(
y,\lambda\left(  x\right)  \cdot t\right)  \right)  $ and note that:

\begin{itemize}
\item $\widehat{G}_{1}\left(  X\times Y\right)  \subseteq(A^{\prime}\times
Y)\cup\left(  X\times L\right)  $,

\item $\left.  \widehat{G}\right\vert _{A\times Y}=\operatorname*{id}$, and

\item tracks of points in $N$ stay in $N$.
\end{itemize}

Now define $H:X\times Y\times\left[  0,1\right]  \rightarrow X\times Y$ by the
rule.%
\[
H_{t}=\left\{
\begin{array}
[c]{cc}%
\widehat{F}_{3t} & 0\leq t\leq\frac{1}{3}\\
\widehat{G}_{3t-1}\circ\widehat{F}_{1} & \frac{1}{3}\leq t\leq\frac{2}{3}\\
\widehat{F}_{3t-2}\circ\widehat{G}_{1}\circ\widehat{F}_{1} & \frac{2}{3}\leq
t\leq1
\end{array}
\right.
\]
A quick check shows that $H_{1}\left(  X\times Y\right)  $ is contained in the
compact set $\widehat{F}_{1}\widehat{G}_{1}(A^{\prime}\times B^{\prime}%
)\cup\left(  K\times L\right)  $; moreover, since the tracks of $H$ are all
concatenations of tracks of $\widehat{F}$ and $\widehat{G}$, $A\times B$ stays
fixed and tracks of points from $N$ stay in $N$. Letting $J$ be the
restriction of $H$ completes the claim.
\end{proof}

\begin{corollary}
\label{Corollary: products of noncompact ANRs are inward tame}The product of
any two\ noncompact ARs is inward tame.
\end{corollary}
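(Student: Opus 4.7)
The plan is to deduce this immediately from the preceding Lemma on inward tameness of products, by verifying that noncompact ARs satisfy the three hypotheses of that lemma: connectedness, noncompactness, and movable finite domination.

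First, I observe that any AR is contractible (since it is a retract of $\mathbb{R}^n$ or $\mathbb{R}^\infty$), and in particular connected. Noncompactness is given. The one thing to check is that a noncompact contractible ANR $X$ is movably finitely dominated. This is the observation already recorded in the remark preceding the lemma on inward tameness of products, but let me spell it out: given any neighborhood of infinity $N \subseteq X$, choose a point $p \in N$ (such a point exists because $\overline{X - N}$ is compact while $X$ is not). Since $X$ is contractible, there is a homotopy $H \colon X \times [0,1] \to X$ with $H_0 = \operatorname{id}_X$ and $H_1 \equiv p$. Then $\overline{H_1(X)} = \{p\}$ is compact and contained in $N$, which is exactly the movable-finite-domination condition.

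With both $X$ and $Y$ connected, noncompact, and movably finitely dominated, the preceding lemma applies directly to yield that $X \times Y$ is inward tame. There is no serious obstacle here; the only substantive point is the recognition that contractibility supplies movable finite domination in the most trivial way possible, allowing the general product lemma to be invoked without further work.
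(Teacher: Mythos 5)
Your proof is correct and follows exactly the route the paper intends: the corollary is stated without a separate proof precisely because the remark preceding Lemma \ref{Lemma: inward tameness of products} already notes that every contractible ANR is movably finitely dominated (being dominated by any singleton), and an AR is just a contractible ANR. Your verification that the contraction can be chosen to end at a point of any prescribed neighborhood of infinity is the only detail worth spelling out, and you handle it correctly.
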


\begin{lemma}
\label{Lemma: products that are fg free at infinity}Let $X$ and $Y$ be
noncompact, simply connected ANRs. Then $X\times Y$ contains arbitrarily small
path-connected neighborhoods of infinity, each with a fundamental group that
is finitely generated and free.
\end{lemma}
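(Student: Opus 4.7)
The plan is to exhibit a neighborhood of infinity of $X\times Y$ that misses a prescribed compactum and to compute its fundamental group via Seifert--van Kampen. Given any compactum $C\subseteq X\times Y$, choose compacta $A\subseteq X$ and $B\subseteq Y$ with $C\subseteq A\times B$. After enlarging $A$ and $B$ I would arrange that $X\setminus A$ has only finitely many path components $V_{1},\dots,V_{k}$ and $Y\setminus B$ has only finitely many path components $W_{1},\dots,W_{\ell}$; bounded components of either complement can be absorbed into the compactum, and since ANRs are locally path-connected, each resulting component of $X\setminus A$ is open and path-connected (and similarly for $Y\setminus B$). Set $N=(X\times Y)\setminus(A\times B)$, a neighborhood of infinity sitting inside $(X\times Y)\setminus C$.

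Next I would study $N$ through the open cover $\mathcal{U}=\{V_{i}\times Y\}_{i=1}^{k}\cup\{X\times W_{j}\}_{j=1}^{\ell}$. Each member of $\mathcal{U}$ is path-connected (a product of two path-connected spaces), every pairwise intersection $(V_{i}\times Y)\cap(X\times W_{j})=V_{i}\times W_{j}$ is nonempty and path-connected, while two distinct $V_{i}\times Y$'s (or two distinct $X\times W_{j}$'s) are disjoint. Because any two members of $\mathcal{U}$ are joined through such a pairwise intersection, $N$ itself is path-connected, and the nerve of $\mathcal{U}$ is the complete bipartite graph $K_{k,\ell}$.

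To compute $\pi_{1}(N)$ I would invoke the graph-of-groups form of Seifert--van Kampen applied to this cover. Because $X$ and $Y$ are simply connected, the vertex groups collapse to $\pi_{1}(V_{i}\times Y)\cong\pi_{1}(V_{i})$ and $\pi_{1}(X\times W_{j})\cong\pi_{1}(W_{j})$, while each edge group is $\pi_{1}(V_{i}\times W_{j})\cong\pi_{1}(V_{i})\times\pi_{1}(W_{j})$, the two edge inclusions inducing the two coordinate projections. Fixing a spanning tree $T$ of $K_{k,\ell}$ and attaching a stable letter $t_{ij}$ to each edge (with $t_{ij}=1$ for $(i,j)\in T$), the defining relations take the form
\[
t_{ij}\,a\,t_{ij}^{-1}=b\qquad\text{for every }(a,b)\in\pi_{1}(V_{i})\times\pi_{1}(W_{j}).
\]
Specializing $b=1$ forces $a=1$ in the quotient for every $a\in\pi_{1}(V_{i})$, and specializing $a=1$ forces $b=1$ for every $b\in\pi_{1}(W_{j})$; the remaining relations reduce to $1=1$. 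Hence $\pi_{1}(N)$ is isomorphic to the fundamental group of the graph $K_{k,\ell}$, namely the free group of rank $(k-1)(\ell-1)$---finitely generated and free, as required.

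The main obstacle is the very first step: if $X$ or $Y$ has infinitely many ends, one cannot literally reduce to finitely many path components of $X\setminus A$ and $Y\setminus B$. I would circumvent this by passing to a locally finite polyhedral model supplied by West's theorem (or by the Hilbert-cube-manifold thickening $X\times\mathcal{Q}$ via Theorems \ref{Theorem: Edwards HCM Theorem} and \ref{Theorem: Chapmans triangulation of HCMs}) and choosing $A$ and $B$ to be compact subpolyhedra large enough that their complements have only finitely many path components. Once that technical setup is in place, the van Kampen computation sketched above delivers the desired finitely generated free $\pi_{1}(N)$.
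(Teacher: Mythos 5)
Your argument is essentially the paper's own proof: the same rectangular neighborhood of infinity $N=((X\setminus A)\times Y)\cup(X\times(Y\setminus B))$, the same cover with nerve $K_{k,\ell}$, and the same application of the generalized van Kampen theorem (the paper cites \cite[Th.6.2.11]{Ge2}), with your explicit presentation-level computation (the relations $t_{ij}at_{ij}^{-1}=b$ killing all vertex groups) being exactly the paper's observation that each vertex-group element is represented by a loop in $U_i\times V_j$ that contracts in the opposite vertex set. The ``main obstacle'' in your last paragraph is not actually an obstacle: for \emph{any} ANR $X$ and compactum $A$, the complement $X\setminus A$ has only finitely many unbounded components (a standard fact quoted in \S\ref{Section: Background}), and the bounded ones are absorbed into $A$; infinitely many ends only means that this finite count is unbounded as $A$ grows, which is harmless here. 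Your proposed workaround via Theorem~\ref{Theorem: West's Theorem} should be dropped in any case, since a mere homotopy equivalence to a polyhedron need not respect neighborhoods of infinity.
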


\begin{proof}
Let $U\subseteq X$ and $V\subseteq Y$ be open neighborhoods of infinity,
consisting of finitely many unbounded path-connected components $\left\{
U_{i}\right\}  _{i=1}^{k_{1}}$ and $\left\{  V_{j}\right\}  _{j=1}^{k_{2}}$,
respectively. Then the corresponding rectangular neighborhood of infinity
$R=(U\times Y)\cup\left(  X\times V\right)  $ may be covered by the finite
collection of path-connected open sets $\left\{  U_{i}\times Y\right\}
_{i=1}^{k_{1}}\cup\left\{  X\times V_{j}\right\}  _{j=1}^{k_{2}}$ in which
each of the two subcollections is pairwise disjoint, and each $U_{i}\times Y$
intersects each $X\times V_{j}$ in the path-connected set $U_{i}\times V_{j}$.
The nerve of this cover is the complete bipartite graph $K_{k_{1},k_{2}}$ and
the connectedness of this graph implies the path-connectedness of $R$. A
straight-forward application of the Generalized van Kampen Theorem to the
corresponding generalized graph of groups (see \cite[Th.6.2.11]{Ge2}) shows
that the fundamental group of $R$ is free on $\left(  k_{1}-1\right)  \left(
k_{2}-1\right)  $ generators, the key observation being that each element of a
vertex group $\pi_{1}\left(  U_{i}\times Y\right)  $ can be represented by a
loop in $U_{i}\times V_{j}$ which then contracts in $X\times V_{j}$, and
similarly for elements of vertex groups $\pi_{1}\left(  X\times V_{j}\right)
$.
\end{proof}

\begin{theorem}
\label{Theorem: products of HCMs}Let $X$ and $Y$ be noncompact, simply
connected, movably finitely dominated Hilbert cube manifolds. Then $X\times Y$
is $\mathcal{Z}$-com\-pact\-ifi\-able.
\end{theorem}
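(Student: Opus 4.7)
The plan is to verify the three conditions of the Chapman--Siebenmann theorem (Theorem \ref{CS Theorem}) for $X\times Y$, after first observing that $X\times Y$ is a $1$-ended Hilbert cube manifold. That $X\times Y$ carries an HCM structure is immediate from $\mathcal{Q}\times\mathcal{Q}\approx\mathcal{Q}$, and $1$-endedness follows because Lemma \ref{Lemma: products that are fg free at infinity}, applied to rectangular neighborhoods obtained by removing compacta from each factor, produces a path-connected neighborhood of infinity lying in the complement of any prescribed compactum. Condition (a), inward tameness, then follows directly from Lemma \ref{Lemma: inward tameness of products}, since $X$ and $Y$ are connected (being simply connected), noncompact, movably finitely dominated ANRs.

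For conditions (b) and (c) I fix a nested cofinal sequence $\{N_i\}_{i=0}^{\infty}$ of connected clean neighborhoods of infinity in $X\times Y$; such a sequence exists for any $1$-ended Hilbert cube manifold by Theorems \ref{Theorem: Edwards HCM Theorem} and \ref{Theorem: Chapmans triangulation of HCMs}. The crucial observation is that the inverse sequence $\{\pi_{1}(N_i),\lambda_{i\ast}\}$ is pro-isomorphic to an inverse sequence of finitely generated free groups. Indeed, given $N_i$, choose compacta $A\subseteq X$ and $B\subseteq Y$ with $(X\times Y)\setminus N_i\subseteq A\times B$; by Lemma \ref{Lemma: products that are fg free at infinity} the resulting rectangular neighborhood $R_i=((X\setminus A)\times Y)\cup(X\times(Y\setminus B))\subseteq N_i$ is path-connected with $\pi_{1}(R_i)$ finitely generated free. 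Conversely each $R_i$, being a neighborhood of infinity, contains some $N_{j_i}$. Interleaving the inclusions with consistent base points yields a ladder diagram of the form \eqref{basic ladder diagram} exhibiting the desired pro-isomorphism.

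Applying the functors $\widetilde{K}_{0}(\mathbb{Z}[-])$ and $\operatorname{Wh}$ and invoking Theorem \ref{Theorem: Bass-Heller-Swan}, the induced sequences $\{\widetilde{K}_{0}(\mathbb{Z}\pi_{1}(N_i))\}$ and $\{\operatorname{Wh}(\pi_{1}(N_i))\}$ both become pro-trivial. Consequently $\sigma_{\infty}(X\times Y)\in\underleftarrow{\lim}\{\widetilde{K}_{0}(\mathbb{Z}\pi_{1}(N_i))\}$ and $\tau_{\infty}(X\times Y)\in\underleftarrow{\lim}^{1}\{\operatorname{Wh}(\pi_{1}(N_i))\}$ vanish, so Theorem \ref{CS Theorem} produces the desired $\mathcal{Z}$-compactification of $X\times Y$.

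The main obstacle I expect is the interleaving step: the clean Hilbert cube manifold neighborhoods demanded by Chapman--Siebenmann and the rectangular neighborhoods delivered by Lemma \ref{Lemma: products that are fg free at infinity} live on different footings, and the bookkeeping of base points---or, more precisely, the consistent choice of base ray needed to upgrade sequences of abstract isomorphisms into a genuine pro-isomorphism of inverse sequences of groups---must be handled carefully. Once that pro-isomorphism is in place, however, vanishing of both obstructions is automatic from Bass--Heller--Swan and no additional input is required.
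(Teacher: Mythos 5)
Your proposal is correct and follows exactly the same route as the paper, whose proof is a one-line citation of the same three ingredients (inward tameness of products, the finitely generated free fundamental groups of rectangular neighborhoods of infinity, and Bass--Heller--Swan) feeding into the Chapman--Siebenmann theorem. The interleaving of clean and rectangular neighborhoods that you flag as the delicate point is precisely the detail the paper leaves implicit, and your handling of it is sound.
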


\begin{proof}
By a combination of Corollary
\ref{Corollary: products of noncompact ANRs are inward tame}, Lemma
\ref{Lemma: products that are fg free at infinity}, and Theorem
\ref{Theorem: Bass-Heller-Swan}, $X\times Y$ satisfies all conditions of
Theorem \ref{CS Theorem}.
\end{proof}

\begin{theorem}
Let $P_{1}$ and $P_{2}$ be noncompact, simply connected, moveably finitely
dominated, finite-dimensional, locally finite polyhedra. Then $P_{1}\times
P_{2}\times\mathbb{I}^{2(\dim P_{1}+\dim P_{2})+5}$ is $\mathcal{Z}$-com\-pact\-ifi\-able.
\end{theorem}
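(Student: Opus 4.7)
The plan is to reduce this statement to the preceding Hilbert-cube version (the theorem about products of HCMs) and then apply Ferry's stabilization theorem (Theorem \ref{Theorem: Ferry's stabilization theorem}) to pass from an infinite-dimensional stabilization back to a finite-dimensional one. Set $K = P_1 \times P_2$, which is a finite-dimensional locally finite polyhedron of dimension $\dim P_1 + \dim P_2$. The target of Ferry's theorem is exactly $K \times \mathbb{I}^{2\dim K + 5}$, so it suffices to prove that $K \times \mathcal{Q}$ is $\mathcal{Z}$-com\-pact\-ifi\-able.

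First I would form $X_i := P_i \times \mathcal{Q}$ for $i = 1,2$. By Edwards' theorem (Theorem \ref{Theorem: Edwards HCM Theorem}), each $X_i$ is a Hilbert cube manifold. Each $X_i$ is noncompact because $P_i$ is, and simply connected because $P_i$ is simply connected and $\mathcal{Q}$ is contractible. The one condition that requires a brief check is movable finite domination of $X_i$: given a neighborhood of infinity $N \subseteq X_i$, compactness of $\mathcal{Q}$ lets us find a compactum $C \subseteq P_i$ with $(P_i - C) \times \mathcal{Q} \subseteq N$; the movable finite domination of $P_i$ then supplies a homotopy $H : P_i \times [0,1] \to P_i$ with $H_0 = \operatorname{id}_{P_i}$ and $\overline{H_1(P_i)}$ compact and contained in $P_i - C$, and $H \times \operatorname{id}_{\mathcal{Q}}$ is the desired self-homotopy of $X_i$ pulling $X_i$ into a compact subset of $N$.

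Now the hypotheses of Theorem \ref{Theorem: products of HCMs} are met by $X_1$ and $X_2$, so $X_1 \times X_2 = P_1 \times P_2 \times \mathcal{Q}^2$ is $\mathcal{Z}$-com\-pact\-ifi\-able. Using the standard homeomorphism $\mathcal{Q}^2 \approx \mathcal{Q}$, this says precisely that $K \times \mathcal{Q}$ is $\mathcal{Z}$-com\-pact\-ifi\-able. Since $K$ is a finite-dimensional locally finite polyhedron with $\dim K = \dim P_1 + \dim P_2$, Ferry's theorem then upgrades this to $\mathcal{Z}$-com\-pact\-ifi\-ability of $K \times \mathbb{I}^{2\dim K + 5} = P_1 \times P_2 \times \mathbb{I}^{2(\dim P_1 + \dim P_2) + 5}$, as required.

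There is no serious obstacle here; the only non-automatic step is the verification that $P_i \times \mathcal{Q}$ inherits movable finite domination, and this goes through exactly as sketched because a cofinal family of neighborhoods of infinity in $P_i \times \mathcal{Q}$ can be taken to be of product form $(P_i - C) \times \mathcal{Q}$.
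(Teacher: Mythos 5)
Your proof is correct and follows essentially the same route as the paper: the paper's own (very terse) proof applies Edwards' theorem and the products-of-HCMs theorem to $P_{1}\times P_{2}\times\mathcal{Q}$, implicitly via the factorization $(P_{1}\times\mathcal{Q})\times(P_{2}\times\mathcal{Q})$, and then invokes Ferry's stabilization theorem. Your explicit verification that each $P_{i}\times\mathcal{Q}$ inherits movable finite domination is a detail the paper leaves unstated, and it goes through exactly as you describe.
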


\begin{proof}
Apply Theorems \ref{Theorem: Edwards HCM Theorem} and
\ref{Theorem: products of HCMs} to $P_{1}\times P_{2}\times\mathcal{Q}$; then
use Theorem \ref{Theorem: Ferry's stabilization theorem}.
\end{proof}

\subsection{Spaces admitting homotopically simple $%
\mathbb{Z}
$-actions}

In this section we consider spaces $X$ that admit a proper $%
\mathbb{Z}
$-action generated by a homeomorphism properly homotopic to
$\operatorname*{id}_{X}$. Under the right circumstances, that hypothesis has
significant implication for the topology of $X$.

\begin{lemma}
\label{Lemma: inward tameness of spaces admitting nice Z-actions}Let $X$ be an
ANR that admits a proper $%
\mathbb{Z}
$-action generated by a homeomorphism $j:X\rightarrow X$ that is properly
homotopic to $\operatorname*{id}_{X}$. Then

\begin{enumerate}
\item if the action is cocompact, $X$ is 2-ended;

\item if the action is not cocompact, $X$ is 1-ended; and

\item if $X$ is finitely dominated, then $X$ is inward tame.
\end{enumerate}
\end{lemma}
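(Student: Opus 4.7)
The plan is to transport the problem to the mapping torus $\operatorname*{Tor}\nolimits_{j}(X)$. Set $Q=\langle j\rangle \backslash X$; since proper actions have finite stabilizers and $\mathbb{Z}$ is torsion free, the $\mathbb{Z}$-action is free, so $Q$ is an ANR. By \lemref{Lemma: mapping torus/Z-action}, $Q\times\mathbb{R} \cong \operatorname*{Tor}\nolimits_j(X)$. The hypothesis $j \overset{p}{\simeq} \operatorname*{id}_X$ upgrades the standard homotopy equivalence $\operatorname*{Tor}\nolimits_j(X)\simeq \operatorname*{Tor}\nolimits_{\operatorname{id}_X}(X)=X\times \mathbb{S}^1$ to a \emph{proper} homotopy equivalence: given a proper homotopy $F\colon X\times I\to X$ from $\operatorname{id}_X$ to $j$, the self-map $(x,t)\mapsto(F(x,t),t)$ of $X\times I$ is proper and descends to a proper homotopy equivalence $X\times\mathbb{S}^1 \to \operatorname*{Tor}\nolimits_j(X)$. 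Combining,
\[
Q\times\mathbb{R} \;\overset{p}{\simeq}\; X\times\mathbb{S}^1.
\]

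For (1), cocompactness makes $Q$ a compact, connected ANR, so $Q\times\mathbb{R}$ has exactly two ends; since the compact connected factor $\mathbb{S}^1$ does not affect the end count, $X$ itself has two ends. For (2), non-cocompactness makes $Q$ a non-compact, connected ANR. Given a compact $K\subseteq Q\times\mathbb{R}$, enlarge it to $Q'\times[-N,N]$ with $Q'\subseteq Q$ compact, and choose $q_0\in Q\setminus Q'$; every point of the complement can be joined to $(q_0,0)$ via some combination of a fiber $\{q\}\times\mathbb{R}$ (when $q\notin Q'$), a slice $Q\times\{t\}$ with $|t|>N$, and $\{q_0\}\times\mathbb{R}$. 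Thus the complement is connected, $Q\times\mathbb{R}$ is 1-ended, and so is $X$.

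For (3), assume $X$ is finitely dominated. Products with the finite complex $\mathbb{S}^1$ preserve finite domination, so $X\times\mathbb{S}^1$ is finitely dominated, and hence so is $Q\times\mathbb{R}$ by the (ordinary) homotopy equivalence above. Because $Q$ is a retract of $Q\times\mathbb{R}$ via the 0-slice inclusion and projection, $Q$ itself is finitely dominated. Applying \lemref{Lemma: f.d. cross R is inward tame} to the ANR $Q$ shows $Q\times\mathbb{R}$ is inward tame; proper homotopy invariance of inward tameness transfers this to $X\times\mathbb{S}^1$. Finally, given a closed neighborhood of infinity $N\subseteq X$, a pulling homotopy on the neighborhood $N\times\mathbb{S}^1$ of infinity in $X\times\mathbb{S}^1$, restricted to the slice $N\times\{y_0\}$ and then projected to $N$, pulls $N$ into a compact subset of itself; this gives inward tameness of $X$.

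The only delicate point is upgrading the homotopy equivalence of mapping tori to a proper one; once properness of the self-map $(x,t)\mapsto(F(x,t),t)$ is verified, each of the three assertions follows from a direct reduction combined with the two cited lemmas.
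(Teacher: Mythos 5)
Your proposal is correct and follows essentially the same route as the paper: both hinge on the chain $(\langle j\rangle\backslash X)\times\mathbb{R}\approx\operatorname*{Tor}_{j}(X)\overset{p}{\simeq}X\times\mathbb{S}^{1}$ from Lemma \ref{Lemma: mapping torus/Z-action}, transfer the end count by proper homotopy invariance, and reduce inward tameness to Lemma \ref{Lemma: f.d. cross R is inward tame} applied to $\langle j\rangle\backslash X$, finishing by projecting a pulling homotopy from $N\times\mathbb{S}^{1}$ back to $N$. The only (harmless) deviation is in part (3), where the paper invokes Mather's theorem to give $\langle j\rangle\backslash X$ finite homotopy type, while you reach the needed finite domination of $\langle j\rangle\backslash X$ directly via products with $\mathbb{S}^{1}$ and retracts; you also supply slightly more detail than the paper on why $j\overset{p}{\simeq}\operatorname{id}_{X}$ upgrades $\operatorname*{Tor}_{j}(X)\simeq X\times\mathbb{S}^{1}$ to a proper homotopy equivalence.
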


\begin{proof}
By Lemma \ref{Lemma: mapping torus/Z-action}, $(\left\langle j\right\rangle
\backslash X)\times%
\mathbb{R}
\approx\operatorname*{Tor}_{j}\left(  X\right)  $, and since $j\overset
{p}{\simeq}\operatorname*{id}_{X}$, the latter space is proper homotopy
equivalent to $X\times\mathbb{S}^{1}$. Now $(\left\langle j\right\rangle
\backslash X)\times%
\mathbb{R}
$ has either two or one ends, according to whether $\left\langle
j\right\rangle \backslash X$ is compact or noncompact, and since the number of
ends is a proper homotopy invariant, the same is true for $X\times
\mathbb{S}^{1}$. Since $X\times\mathbb{S}^{1}$ has the same number of ends as
$X$, the first two assertions follow.

Next assume that $X$ is finitely dominated. By Theorem
\ref{Theorem: Mather's theorem}, $X\times\mathbb{S}^{1}$ has finite homotopy
type, so by the above equivalences, $\left\langle j\right\rangle \backslash X$
also has finite homotopy type. By Lemma
\ref{Lemma: f.d. cross R is inward tame}, $(\left\langle j\right\rangle
\backslash X)\times%
\mathbb{R}
$ is inward tame, and since inward tameness is an invariant of proper homotopy
type, $X\times\mathbb{S}^{1}$ is inward tame. It follows that $X$ is inward
tame since, if $N$ is a closed neighborhood of infinity in $X$, then
$N\times\mathbb{S}^{1}$ is a closed neighborhood of infinity in $X\times
\mathbb{S}^{1}$; and a homotopy that pulls $N\times\mathbb{S}^{1}$ into a
compact subset projects to a homotopy that pulls $N$ into a compact subset.
\end{proof}

\begin{lemma}
\label{Lemma: pro-pi1 of spaces admitting Z-actions}Let $X$ be a simply
connected ANR that admits a proper $%
\mathbb{Z}
$-action generated by a homeomorphism $j:X\rightarrow X$ that is properly
homotopic to $\operatorname{id}_{X}$. Then

\begin{enumerate}
\item if the action is cocompact, $X$ is simply connected at each of its two
ends, and

\item if the action is not cocompact, $X$ is strongly connected at infinity
and $\operatorname*{pro}$-$\pi_{1}\left(  \varepsilon\left(  X\right)
\right)  $ is pro-finitely generated free.
\end{enumerate}
\end{lemma}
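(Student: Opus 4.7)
\medskip
\noindent\textbf{Proof plan.} The engine of both parts is the proper homotopy equivalence $X\times\mathbb{S}^1\stackrel{p}{\simeq}Y\times\mathbb{R}$, where $Y=\langle j\rangle\backslash X$; this comes from Lemma \ref{Lemma: mapping torus/Z-action} combined with $j\stackrel{p}{\simeq}\operatorname{id}_X$ (which gives $\operatorname{Tor}_j(X)\stackrel{p}{\simeq}\operatorname{Tor}_{\operatorname{id}_X}(X)=X\times\mathbb{S}^1$). Since the $\mathbb{Z}$-action is free and proper and $X$ is simply connected, $X\to Y$ is the universal cover, so $\pi_1(Y)\cong\mathbb{Z}$. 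Throughout I will use the cofinal sequence $\{N_i\times\mathbb{S}^1\}$ of neighborhoods of infinity in $X\times\mathbb{S}^1$, where $\{N_i\}$ is a cofinal sequence of path-connected neighborhoods of infinity in $X$; this realizes $\operatorname{pro}\text{-}\pi_1(\varepsilon(X\times\mathbb{S}^1))$ as $\{\pi_1(N_i)\times\mathbb{Z}\}$.

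For part (1), $Y$ is compact, so $Y\times\mathbb{R}$ has exactly two ends and on each end the inclusions $Y\times[n,\infty)\hookrightarrow Y\times[n-1,\infty)$ are homotopy equivalences; hence $\operatorname{pro}\text{-}\pi_1$ at each end is stably $\pi_1(Y)=\mathbb{Z}$. Transporting across the proper homotopy equivalence, each end of $X\times\mathbb{S}^1$ corresponds to one of these, giving a pro-isomorphism $\{\pi_1(N_i)\times\mathbb{Z}\}\cong_{\mathrm{pro}}\mathbb{Z}$ on each end of $X$. A direct inspection of a ladder diagram realizing this pro-isomorphism (with generator sent to $(h_0,k_0)\in\pi_1(N_i)\times\mathbb{Z}$ and projection $\alpha:\pi_1(N_i)\times\mathbb{Z}\to\mathbb{Z}$) forces $k_0=\pm1$ and $\alpha(g,n)=\pm n$, and the equation for the bonding map then forces the inclusion-induced map $\pi_1(N_{i'})\to\pi_1(N_i)$ to be trivial whenever $i'$ is sufficiently far from $i$. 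Thus $X$ has two ends and is simply connected at each.

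For part (2), $Y$ is noncompact and connected, so $Y\times\mathbb{R}$ is 1-ended; hence $X\times\mathbb{S}^1$ and $X$ are 1-ended. To identify $\operatorname{pro}\text{-}\pi_1(\varepsilon(Y\times\mathbb{R}))$, I would pick a compact exhaustion $\{K_i\}$ of $Y$ (with $Y\setminus K_i$ consisting of finitely many unbounded components $C_i^1,\ldots,C_i^{k_i}$, using that $Y$ is an ANR) and form the cofinal sequence $R_i=((Y\setminus K_i)\times\mathbb{R})\cup(Y\times(\mathbb{R}\setminus[-i,i]))$. Van Kampen applied to the open cover $\{C_i^\ell\times\mathbb{R}\}\cup\{Y\times(i,\infty),\,Y\times(-\infty,-i)\}$ has bipartite nerve $K_{k_i,2}$, path-connected pairwise intersections $C_i^\ell\times\pm(i,\infty)\simeq C_i^\ell$, and empty triples. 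The resulting graph-of-groups calculation exhibits $\pi_1(R_i)$ as an HNN/amalgamation that contains a distinguished $\pi_1(Y)=\mathbb{Z}$ factor (coming from the half-slabs $Y\times\pm(i,\infty)$) together with $k_i-1$ stable letters associated to the non-tree edges of the nerve. Matching this via the proper-homotopy pro-isomorphism with $\{\pi_1(N_j)\times\mathbb{Z}\}$, the $\mathbb{Z}$ factor pairs off with the one from $\mathbb{S}^1$, leaving $\{\pi_1(N_j)\}$ pro-isomorphic to an inverse sequence of finitely generated free groups of ranks $k_i-1$. Semistability of this system—hence strong connectivity at infinity of $X$—comes automatically once a cofinal subsequence is chosen so that the bonding maps among the $F_{k_i-1}$ are manifestly surjective.

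The main technical obstacle is the van Kampen bookkeeping in case (2): isolating the distinguished $\mathbb{Z}$ factor from the free part so that the pro-isomorphism with $\{\pi_1(N_j)\times\mathbb{Z}\}$ manifestly pairs the two $\mathbb{Z}$'s and leaves behind a pro-finitely generated free inverse system. Once that matching is made precise (and a subsequence is passed to so that the $\iota_\ell:\pi_1(C_i^\ell)\to\pi_1(Y)$ behave uniformly), both the semistability and the pro-finite-generation-freeness of $\operatorname{pro}\text{-}\pi_1(\varepsilon(X))$ fall out.
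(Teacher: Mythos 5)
Your overall strategy---exploiting $(\left\langle j\right\rangle \backslash X)\times\mathbb{R}\approx\operatorname*{Tor}\nolimits_{j}\left(  X\right)  \overset{p}{\simeq}X\times\mathbb{S}^{1}$ and comparing $\operatorname{pro}$-$\pi_{1}$ via ladder diagrams---is exactly the paper's, and your part (1) is correct; your explicit ladder computation (forcing $k_{0}=\pm1$ and hence the eventual triviality of the inclusion-induced maps $\pi_{1}(N_{i^{\prime}})\rightarrow\pi_{1}(N_{i})$) usefully fills in a step the paper dismisses with \textquotedblleft clearly.\textquotedblright

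Part (2), however, has a genuine gap at the van Kampen step. With $Y=\left\langle j\right\rangle \backslash X$ noncompact and $\pi_{1}(Y)\cong\mathbb{Z}$, your cover of $R_{i}$ by $\{C_{i}^{\ell}\times\mathbb{R}\}\cup\{Y\times(i,\infty),\,Y\times(-\infty,-i)\}$ produces a graph of groups over $K_{k_{i},2}$ whose edge groups are the $\pi_{1}(C_{i}^{\ell})$, mapping isomorphically into the vertex groups $\pi_{1}(C_{i}^{\ell}\times\mathbb{R})$ but into the half-slab vertex groups $\pi_{1}(Y)\cong\mathbb{Z}$ by whatever homomorphisms $\iota_{\ell}:\pi_{1}(C_{i}^{\ell})\rightarrow\mathbb{Z}$ the inclusions happen to induce. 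Collapsing the edges whose edge maps are isomorphisms leaves two $\mathbb{Z}$-vertices joined by $k_{i}$ edges with edge groups $\pi_{1}(C_{i}^{\ell})$; already for $k_{i}=1$ with $\iota_{1}$ of image $n\mathbb{Z}$, $n\geq2$, this yields $\left\langle a,b\mid a^{n}=b^{n}\right\rangle$, which is not of the form $F\times\mathbb{Z}$. So a single neighborhood of infinity does not exhibit \textquotedblleft a distinguished $\mathbb{Z}$ factor together with $k_{i}-1$ stable letters\textquotedblright; the product structure $F_{i}\times\left\langle a\right\rangle$ (and the surjectivity of the bonding maps, hence semistability) only emerges at the pro level, after analyzing how the maps $\pi_{1}(R_{i^{\prime}})\rightarrow\pi_{1}(R_{i})$ factor. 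That analysis is precisely the content of \cite[Prop. 3.12]{GM2}, which the paper cites rather than reproves; neither it nor semistability \textquotedblleft comes automatically\textquotedblright\ from choosing a subsequence. Finally, the pairing of the two infinite cyclic factors---the $\left\langle a\right\rangle$ coming from $\pi_{1}(Y)$ with the $\left\langle t\right\rangle$ coming from the $\mathbb{S}^{1}$-factor---which you correctly flag as the crux, is not supplied by the mere existence of a pro-isomorphism: the paper obtains it by inspecting the explicit homeomorphism $(\left\langle j\right\rangle \backslash X)\times\mathbb{R}\approx\operatorname*{Tor}\nolimits_{j}\left(  X\right)$ from \cite[\S 8]{GG}, arranging that the ladder maps send $a\mapsto t$ and $t\mapsto a$, and only then does the ladder restrict to one between $\{H_{i}\}$ and $\{F_{i}\}$. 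Your proposal leaves both the structure theorem and this matching unestablished.
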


\begin{proof}
The proof is primarily an application of work done in \cite{GM2}; we add a few
observations to make those results fit our situation more precisely. For both
assertions, we again use the equivalences:%
\begin{equation}
(\left\langle j\right\rangle \backslash X)\times%
\mathbb{R}
\approx\operatorname*{Tor}\nolimits_{j}\left(  X\right)  \overset{p}{\simeq
}X\times\mathbb{S}^{1}. \label{Equivalences: Quotient x R versus X x S1}%
\end{equation}

First assume that $\left\langle j\right\rangle \backslash X$ is compact. Then
$(\left\langle j\right\rangle \backslash X)\times%
\mathbb{R}
$ is 2-ended and the natural choices of base rays: $r_{-}=\left\{  p\right\}
\times(-\infty,0]$ and $r_{+}=\left\{  p\right\}  \times\lbrack0,\infty)$,
along with the natural choice of neighborhoods of infinity $(\left\langle
j\right\rangle \backslash X)\times(-\infty,-n]\cup\lbrack n,\infty)$ yield
representations of $\operatorname*{pro}$-$\pi_{1}\left(  \varepsilon\left(
(\left\langle j\right\rangle \backslash X)\times%
\mathbb{R}
\right)  ,r_{\pm}\right)  $ of the form $%
\mathbb{Z}
\overset{\operatorname*{id}}{\longleftarrow}%
\mathbb{Z}
\overset{\operatorname*{id}}{\longleftarrow}%
\mathbb{Z}
\overset{\operatorname*{id}}{\longleftarrow}\cdots$. The proper homotopy
equivalence promised above implies the same for the two ends of $X\times
\mathbb{S}^{1}$. Clearly, that can happen only if $X$ is simply connected at
each of its two ends.

In the non-cocompact case, $(\left\langle j\right\rangle \backslash X)\times%
\mathbb{R}
$ is 1-ended and by \cite[Prop. 3.12]{GM2}, with an appropriate choice of base
ray, $\operatorname*{pro}$-$\pi_{1}\left(  \varepsilon\left(  (\left\langle
j\right\rangle \backslash X)\times%
\mathbb{R}
\right)  ,r\right)  $ may be represented by an inverse sequence%
\begin{equation}
F_{0}\times\left\langle a\right\rangle \overset{\lambda_{1}\times
\operatorname*{id}}{\twoheadleftarrow}F_{1}\times\left\langle a\right\rangle
\overset{\lambda_{2}\times\operatorname*{id}}{\twoheadleftarrow}F_{2}%
\times\left\langle a\right\rangle \overset{\lambda_{3}\times\operatorname*{id}%
}{\twoheadleftarrow}\cdots\label{Inverse sequence: frees cross Z}%
\end{equation}
where each $F_{i}$ is a finitely generated free group, $\lambda_{i}$ takes
$F_{i+1}$ onto $F_{i}$, and $\left\langle a\right\rangle $ is an infinite
cyclic group corresponding to a `copy' of $\pi_{1}\left(  (\left\langle
j\right\rangle \backslash X)\times\left\{  r_{i}\right\}  \right)  $, for
increasingly large $r_{i}$. Semistability of this sequence implies that
$(\left\langle j\right\rangle \backslash X)\times%
\mathbb{R}
$, and hence $X\times\mathbb{S}^{1}$, is strongly connected at infinity. This
allows us to dispense with mention of base rays. It also implies that $X$ is
strongly connected at infinity, so $\operatorname*{pro}$-$\pi_{1}\left(
\varepsilon(X\right)  )$ is semistable and may be represented by an inverse
sequence of surjections $H_{0}\overset{\mu_{1}}{\twoheadleftarrow}%
H_{1}\overset{\mu_{2}}{\twoheadleftarrow}H_{2}\overset{\mu_{3}}%
{\twoheadleftarrow}\cdots$. It follows that $\operatorname*{pro}$-$\pi
_{1}(\varepsilon(X\times\mathbb{S}^{1}))$ may be represented by
\begin{equation}
H_{0}\times\left\langle t\right\rangle \overset{\mu_{1}\times
\operatorname*{id}}{\twoheadleftarrow}H_{1}\times\left\langle t\right\rangle
\overset{\mu_{2}\times\operatorname*{id}}{\twoheadleftarrow}H_{2}%
\times\left\langle t\right\rangle \overset{\mu_{3}\times\operatorname*{id}%
}{\twoheadleftarrow}\cdots\label{Inverse sequence: H-sequence cross Z}%
\end{equation}
where each $\left\langle t\right\rangle $ is the infinite cyclic group
corresponding to the $\mathbb{S}^{1}$-factor.

The equivalences of (\ref{Equivalences: Quotient x R versus X x S1}) ensure a
ladder diagram between subsequences of (\ref{Inverse sequence: frees cross Z})
and (\ref{Inverse sequence: H-sequence cross Z}). After relabeling to avoid
messy subsequence notation, that diagram has the form:
\begin{equation}
\begin{diagram} H_{0}\times\left\langle t\right\rangle & & \lTo^{\mu_{1}\times\operatorname*{id}} & & H_{1}\times\left\langle t\right\rangle & & \lTo^{\mu_{2}\times\operatorname*{id}} & & H_{2}\times\left\langle t\right\rangle & & \lTo^{\mu_{3}\times\operatorname*{id}} & & H_{3}\times\left\langle t\right\rangle& \cdots\\ & \luTo ^{u_{0}} & & \ldTo^{d_{1}} & & \luTo ^{u_{1}} & & \ldTo^{d_{2}} & & \luTo^{u_{2}} & & \ldTo^{d_{3}} &\\ & & F_{0}\times\left\langle a\right\rangle & & \lTo^{\lambda_{1}\times\operatorname*{id}} & & F_{1}\times\left\langle a\right\rangle & & \lTo^{\lambda_{2}\times\operatorname*{id}} & & F_{2}\times\left\langle a\right\rangle & & \lTo^ {\lambda_{3}\times\operatorname*{id}} & & \cdots \end{diagram} \label{Diagram: Big ladder}%
\end{equation}
A close look at the homeomorphism between $(\left\langle j\right\rangle
\backslash X)\times%
\mathbb{R}
$ and $\operatorname*{Tor}\nolimits_{j}\left(  X\right)  $, as described in
\cite[\S 8]{GG}, shows that, with appropriate choice of base rays, we may
arrange that each $u_{i}$ takes $a$ to $t$. Then, by commutativity, each
$d_{i}$ takes $t$ to $a$, each $u_{i}$ takes $F_{i}$ into $H_{i}$, and each
$d_{i}$ takes $H_{i}$ into $F_{i-1}$. So diagram (\ref{Diagram: Big ladder})
restricts to a diagram of the form
\begin{equation}
\begin{diagram} H_{0} & & \lTo^{\mu_{1}} & & H_{1} & & \lTo^{\mu_{2}} & & H_{2} & & \lTo^{\mu_{3}} & & H_{3} & \cdots\\ & \luTo & & \ldTo & & \luTo & & \ldTo & & \luTo & & \ldTo &\\ & & F_{0} & & \lTo^{\lambda_{1}} & & F_{1} & & \lTo^{\lambda_{2}} & & F_{2} & & \lTo^ {\lambda_{3}} & & \cdots \end{diagram} \label{Diagram: Small ladder}%
\end{equation}
which verifies that $\operatorname*{pro}$-$\pi_{1}\left(  \varepsilon\left(
X\right)  \right)  $ is pro-finitely generated free.
\end{proof}

\begin{theorem}
\label{Theorem: HCMs admitting z-actions}If a simply connected and finitely
dominated Hilbert cube manifold $X$ admits a proper $%
\mathbb{Z}
$-action generated by a homeomorphism $j:X\rightarrow X$ that is properly
homotopic to $\operatorname*{id}_{X}$, then $X$ is $\mathcal{Z}$-com\-pact\-ifi\-able.
\end{theorem}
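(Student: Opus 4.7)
The plan is to verify the three conditions of the Chapman-Siebenmann theorem (Theorem \ref{CS Theorem}). Inward tameness is handed to us by part (3) of Lemma \ref{Lemma: inward tameness of spaces admitting nice Z-actions}, so the work lies entirely in showing that the finiteness obstruction $\sigma_{\infty}(X)$ and the Whitehead torsion obstruction $\tau_{\infty}(X)$ both vanish. The key idea is that Lemma \ref{Lemma: pro-pi1 of spaces admitting Z-actions} forces the fundamental group pro-system at infinity to be built from finitely generated free groups (or to be pro-trivial), and Bass--Heller--Swan (Theorem \ref{Theorem: Bass-Heller-Swan}) then kills everything in sight.

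I would split into cases according to whether the $\mathbb{Z}$-action is cocompact. In the non-cocompact case, parts (2) and (3) of Lemma \ref{Lemma: inward tameness of spaces admitting nice Z-actions} give that $X$ is 1-ended and inward tame, so Theorem \ref{CS Theorem} applies directly. Using Theorems \ref{Theorem: Chapmans triangulation of HCMs} and the standard construction, choose a nested cofinal sequence $\{N_i\}$ of connected clean neighborhoods of infinity. By Lemma \ref{Lemma: pro-pi1 of spaces admitting Z-actions}, the inverse sequence $\{\pi_1(N_i), \lambda_{i*}\}$ is pro-isomorphic to an inverse sequence of finitely generated free groups. Applying the functors $\widetilde{K}_0(\mathbb{Z}[-])$ and $\operatorname{Wh}(-)$ preserves pro-isomorphism, so by Theorem \ref{Theorem: Bass-Heller-Swan} the sequences $\{\widetilde{K}_0(\mathbb{Z}\pi_1(N_i))\}$ and $\{\operatorname{Wh}(\pi_1(N_i))\}$ are pro-trivial. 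This forces both $\varprojlim$ and $\varprojlim^{1}$ to vanish, giving conditions (b) and (c).

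In the cocompact case, parts (1) of both lemmas give that $X$ is 2-ended and simply connected at each end. Theorem \ref{CS Theorem} is stated for 1-ended spaces, so I would apply it to each end separately: choose cofinal sequences of connected clean neighborhoods of each of the two ends, which (after passing to a subsequence) can be taken to be simply connected. Then each term of the relevant $\widetilde{K}_0$- and $\operatorname{Wh}$-sequences is trivial, so conditions (b) and (c) hold automatically at each end. Combining the resulting $\mathcal{Z}$-compactifications of appropriate two-sided neighborhoods of infinity with the compact core yields a $\mathcal{Z}$-compactification of $X$.

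The only genuine subtlety I anticipate is bookkeeping: ensuring that the ladder diagram establishing pro-isomorphism with an inverse sequence of finitely generated free groups can be realized at the level of clean neighborhoods of infinity, so that the $\widetilde{K}_0$ and $\operatorname{Wh}$ pro-sequences computed from the $N_i$ really are the ones to which Bass--Heller--Swan applies. This is a matter of passing to subsequences and invoking the functoriality of $\widetilde{K}_0$ and $\operatorname{Wh}$ together with the standard fact that pro-isomorphic sequences have isomorphic $\varprojlim$ and $\varprojlim^1$; no new input beyond the two lemmas already proved is needed.
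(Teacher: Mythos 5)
Your proposal is correct and follows essentially the same route as the paper: inward tameness from Lemma \ref{Lemma: inward tameness of spaces admitting nice Z-actions}, the pro-(finitely generated free) or simply connected structure at infinity from Lemma \ref{Lemma: pro-pi1 of spaces admitting Z-actions}, vanishing of the $\widetilde{K}_{0}$- and $\operatorname{Wh}$-obstructions via Theorem \ref{Theorem: Bass-Heller-Swan}, and the same splitting into two 1-ended pieces in the cocompact case. The paper's proof is just a terser version of yours, leaving the functoriality and pro-isomorphism bookkeeping implicit.
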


\begin{proof}
If the action is not cocompact, the previous two lemmas together with Theorem
\ref{Theorem: Bass-Heller-Swan}, ensure that $X$ satisfies the conditions of
Theorem \ref{CS Theorem}. In the cocompact case, the same lemmas imply that
$X$ is inward tame and 2-ended, and that each of those ends is simply
connected. In order to use the 1-ended version of Theorem \ref{CS Theorem}
provided here, split $X$ into a pair of 1-ended Hilbert cube manifolds and
apply the theorem to each end individually.
\end{proof}

\begin{theorem}
If a simply connected, locally finite polyhedron $P$ is finitely dominated and
finite-dimensional, and admits a proper $%
\mathbb{Z}
$-action generated by a homeomorphism $j:P\rightarrow P$ that is properly
homotopic to $\operatorname*{id}_{P}$, then $P\times\mathbb{I}^{2\cdot\dim
P+5}$ is $\mathcal{Z}$-com\-pact\-ifi\-able.
\end{theorem}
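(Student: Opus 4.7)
The plan is to mimic the strategy used in the earlier polyhedral product theorem: first stabilize by the Hilbert cube, invoke the HCM result just proved, and then destabilize via Ferry's theorem.

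First I would form the product $P\times\mathcal{Q}$. By Theorem \ref{Theorem: Edwards HCM Theorem}, this is a Hilbert cube manifold. Since $P$ is simply connected, so is $P\times\mathcal{Q}$; since $P$ is finitely dominated and $\mathcal{Q}$ is compact (hence dominated by a point), $P\times\mathcal{Q}$ is finitely dominated as well. Next I would lift the given $\mathbb{Z}$-action to $P\times\mathcal{Q}$ by the diagonal-style action generated by $\widehat{\jmath}=j\times\operatorname*{id}_{\mathcal{Q}}$. This is a homeomorphism, and properness of the $\mathbb{Z}$-action on $P$ together with compactness of $\mathcal{Q}$ makes the action on $P\times\mathcal{Q}$ proper. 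Moreover, if $H:P\times[0,1]\to P$ is a proper homotopy from $\operatorname*{id}_{P}$ to $j$, then $H\times\operatorname*{id}_{\mathcal{Q}}$ is a proper homotopy from $\operatorname*{id}_{P\times\mathcal{Q}}$ to $\widehat{\jmath}$ (properness follows from the fact that compact sets in $P\times\mathcal{Q}$ project to compact sets in $P$, using compactness of $\mathcal{Q}$).

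At this point $P\times\mathcal{Q}$ satisfies every hypothesis of Theorem \ref{Theorem: HCMs admitting z-actions}, so that theorem yields a $\mathcal{Z}$-com\-pact\-ifi\-ca\-tion of $P\times\mathcal{Q}$. Finally, since $P$ is a finite-dimensional locally finite polyhedron, Theorem \ref{Theorem: Ferry's stabilization theorem} applies, giving that $P\times\mathbb{I}^{2\cdot\dim P+5}$ is $\mathcal{Z}$-com\-pact\-ifi\-able.

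In effect, no genuinely new obstacle appears: the entire argument is a formal concatenation of the stabilization-destabilization trick (Edwards + Ferry) with the previous HCM theorem. The only minor point requiring a bit of care is checking that the required properties of the $\mathbb{Z}$-action (properness and proper homotopy to the identity) transfer from $P$ to $P\times\mathcal{Q}$; this is where the compactness of $\mathcal{Q}$ is essential. Once that verification is in place, the conclusion is immediate.
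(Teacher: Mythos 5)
Your proposal is correct and follows exactly the paper's argument: apply Edwards' theorem to see that $j\times\operatorname*{id}_{\mathcal{Q}}$ acting on the Hilbert cube manifold $P\times\mathcal{Q}$ satisfies the hypotheses of Theorem \ref{Theorem: HCMs admitting z-actions}, then destabilize with Theorem \ref{Theorem: Ferry's stabilization theorem}. The extra verifications you spell out (properness and the proper homotopy transferring to $P\times\mathcal{Q}$ via compactness of $\mathcal{Q}$) are exactly the routine checks the paper leaves implicit.
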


\begin{proof}
By Theorem \ref{Theorem: Edwards HCM Theorem}, $j\times\operatorname*{id}%
_{\mathcal{Q}}:P\times\mathcal{Q\ }\mathcal{\rightarrow}P\times\mathcal{Q}$
satisfies the hypotheses of Theorem \ref{Theorem: HCMs admitting z-actions}.
An application of Theorem \ref{Theorem: Ferry's stabilization theorem}
completes the proof.
\end{proof}

\subsection{Spaces that are simply connected at infinity}

The key result about Hilbert cube manifolds that are simply connected at
infinity is our easiest application of Theorem \ref{CS Theorem}; it can be
found in Chapman and Siebenmann's original work. For completeness, we include
a sketch of their proof.

\begin{theorem}
[{\cite[Cor. to Th.8]{CS}}]%
\label{Theorem: 1-conn at infinity Z-compactifiablility theorem}If $X$ is a
Hilbert cube manifold that is simply connected at infinity and $H_{\ast
}\left(  X;%
\mathbb{Z}
\right)  $ is finitely generated, then $X$ is $\mathcal{Z}$-com\-pact\-ifi\-able.

\begin{proof}
[Sketch of Proof]Due to the triviality of $\operatorname*{pro}$-$\pi
_{1}\left(  \varepsilon\left(  X\right)  \right)  $, we need only show that
$X$ is inward tame. If $N$ is a clean neighborhood of infinity, then
$\operatorname*{Bd}_{X}N$ is homotopy equivalent to a finite complex. Since
$H_{i}\left(  \operatorname*{Bd}\nolimits_{X}N;%
\mathbb{Z}
\right)  $ and $H_{i}\left(  X;%
\mathbb{Z}
\right)  $ are both finitely generated for all $i$, and eventually trivial,
the Mayer-Vietoris sequence%

\[
\cdots\rightarrow H_{i}\left(  \operatorname*{Bd}\nolimits_{X}N;%
\mathbb{Z}
\right)  \rightarrow H_{i}\left(  \overline{X-N};%
\mathbb{Z}
\right)  \oplus H_{i}\left(  N;%
\mathbb{Z}
\right)  \rightarrow H_{i}\left(  X;%
\mathbb{Z}
\right)  \rightarrow\cdots
\]
shows that the same is true for $H_{i}\left(  N;%
\mathbb{Z}
\right)  $. Furthermore, the simple connectivity at infinity of $X$, together
with standard techniques from Hilbert cube manifold topology, ensure the
existence of arbitrarily small simply connected $N$. Since a simply connected
complex with finitely generated $%
\mathbb{Z}
$-homology necessarily has finite homotopy type (see \cite[p.420]{Sp}), it
follows that $X$ is inward tame.
\end{proof}
\end{theorem}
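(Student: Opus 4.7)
The plan is to invoke the Chapman--Siebenmann criterion (Theorem \ref{CS Theorem}) and show that all three of its obstructions vanish. Since $X$ is $1$-ended and $\operatorname{pro}$-$\pi_{1}(\varepsilon(X))$ is pro-trivial, standard Hilbert cube manifold techniques allow us to choose a nested cofinal sequence $\{N_i\}$ of connected clean neighborhoods of infinity in which each $N_i$ is itself simply connected. Conditions (b) and (c) of Theorem \ref{CS Theorem} then reduce to statements about $\widetilde{K}_0(\mathbb{Z})$ and $\operatorname{Wh}(1)$, both of which vanish (Theorem \ref{Theorem: Bass-Heller-Swan} applied to the trivial group). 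Hence the entire burden reduces to verifying condition (a): inward tameness of $X$.

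For inward tameness it suffices to show that each $N_i$ has finite homotopy type. The key inputs are: (i) $\operatorname{Bd}_X N_i$ is a compact Hilbert cube manifold and hence, by Theorem \ref{Theorem: West's Theorem}, has the homotopy type of a finite polyhedron, so its integral homology is finitely generated and concentrated in finitely many degrees; (ii) $\overline{X - N_i}$ is also a compact Hilbert cube manifold, so the same applies to its homology; (iii) $H_\ast(X;\mathbb{Z})$ is finitely generated by hypothesis, hence nonzero in only finitely many degrees. Feeding these into the Mayer--Vietoris sequence
\[
\cdots \to H_k(\operatorname{Bd}\nolimits_X N_i;\mathbb{Z}) \to H_k(\overline{X - N_i};\mathbb{Z}) \oplus H_k(N_i;\mathbb{Z}) \to H_k(X;\mathbb{Z}) \to \cdots
\]
forces $H_\ast(N_i;\mathbb{Z})$ to be finitely generated and concentrated in finitely many degrees. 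Since $N_i$ is simply connected and homotopy equivalent to a CW complex (Theorem \ref{Theorem: West's Theorem}), the standard fact that a simply connected CW complex with finitely generated homology in finitely many degrees has finite homotopy type (see \cite[p.420]{Sp}) gives $N_i$ finite homotopy type, and therefore inward tameness of $X$.

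The main obstacle is the first step: producing a cofinal sequence of genuinely simply connected clean neighborhoods of infinity from the mere pro-triviality of $\operatorname{pro}$-$\pi_{1}(\varepsilon(X))$. In a general ANR this improvement is not always possible, but in a Hilbert cube manifold one can perform handle-type surgeries (available via Chapman's theory) to kill the finitely generated fundamental group of a clean neighborhood at the cost of shrinking slightly, and the resulting replacements remain clean. Once that structural improvement is in place, the rest of the proof is formal: a single Mayer--Vietoris computation plus the Bass--Heller--Swan vanishing theorem. This is precisely why the hypothesis that $X$ be a Hilbert cube manifold, rather than a general inward tame ANR, is doing real work.
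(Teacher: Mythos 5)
Your proposal is correct and follows essentially the same route as the paper's sketch: reduce everything to inward tameness (the $K$-theoretic obstructions of Theorem \ref{CS Theorem} vanish because the relevant fundamental groups are trivial), obtain arbitrarily small simply connected clean neighborhoods of infinity from pro-triviality of $\operatorname{pro}$-$\pi_{1}(\varepsilon(X))$, run the Mayer--Vietoris argument to show those neighborhoods have finitely generated homology, and invoke the Spanier finiteness criterion. The only differences are cosmetic: you spell out the Bass--Heller--Swan step explicitly and list the finite generation of $H_{\ast}\left(\overline{X-N_{i}};\mathbb{Z}\right)$ as an input, which the exact sequence does not actually require.
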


\begin{theorem}
If $P$ is a finite-dimensional, locally finite polyhedron that is simply
connected at infinity and $H_{\ast}\left(  P;%
\mathbb{Z}
\right)  $ is finitely generated, then $P\times\mathbb{I}^{2\cdot\dim P+5}$ is
$\mathcal{Z}$-com\-pact\-ifi\-able.
\end{theorem}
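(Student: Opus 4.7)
The plan is to follow exactly the same template used twice already in this section to convert a Hilbert cube manifold result into a statement about a finite-dimensional polyhedron stabilized by a cube: namely, multiply by $\mathcal{Q}$, apply the relevant Hilbert cube manifold theorem, then shave the Hilbert cube back down via Ferry's stabilization theorem.

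Concretely, I would first form $P \times \mathcal{Q}$ and invoke Theorem \ref{Theorem: Edwards HCM Theorem} to see that this is a Hilbert cube manifold. Next, I would verify that $P \times \mathcal{Q}$ inherits the two hypotheses of Theorem \ref{Theorem: 1-conn at infinity Z-compactifiablility theorem}. Since $\mathcal{Q}$ is compact and contractible, a cofinal sequence $\{N_i\}$ of neighborhoods of infinity in $P$ yields the cofinal sequence $\{N_i \times \mathcal{Q}\}$ in $P \times \mathcal{Q}$, and projection to the first factor is a proper homotopy equivalence of each $N_i \times \mathcal{Q}$ with $N_i$; in particular, the number of ends and $\operatorname{pro}$-$\pi_1$ at infinity are preserved, so $P \times \mathcal{Q}$ is $1$-ended and simply connected at infinity. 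The homology hypothesis is equally immediate, as $H_\ast(P \times \mathcal{Q}; \mathbb{Z}) \cong H_\ast(P; \mathbb{Z})$ is finitely generated. Theorem \ref{Theorem: 1-conn at infinity Z-compactifiablility theorem} then yields that $P \times \mathcal{Q}$ is $\mathcal{Z}$-com\-pact\-ifi\-able.

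Finally, I would apply Theorem \ref{Theorem: Ferry's stabilization theorem} to the finite-dimensional locally finite polyhedron $P$, concluding that $P \times \mathbb{I}^{2\cdot\dim P + 5}$ is $\mathcal{Z}$-com\-pact\-ifi\-able, as desired.

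There is no genuine obstacle here; all the work has been pushed into Theorems \ref{Theorem: Edwards HCM Theorem}, \ref{Theorem: Ferry's stabilization theorem}, and \ref{Theorem: 1-conn at infinity Z-compactifiablility theorem}. The only tiny check that merits explicit mention is the inheritance of \emph{simple connectivity at infinity} on passing from $P$ to $P \times \mathcal{Q}$, and this follows at once from contractibility of $\mathcal{Q}$ together with proper homotopy invariance of $\operatorname{pro}$-$\pi_1(\varepsilon(\cdot))$ recorded in \secref{Section: Background}.
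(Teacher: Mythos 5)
Your proof is correct and follows exactly the route the paper takes: stabilize by $\mathcal{Q}$ via Edwards' theorem, apply Theorem \ref{Theorem: 1-conn at infinity Z-compactifiablility theorem}, and then use Ferry's stabilization theorem to return to a finite-dimensional space. The additional checks you record (inheritance of simple connectivity at infinity and of finitely generated homology under the compact contractible factor $\mathcal{Q}$) are correct and are implicit in the paper's one-line proof.
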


\begin{proof}
Apply Theorems \ref{Theorem: Edwards HCM Theorem},
\ref{Theorem: 1-conn at infinity Z-compactifiablility theorem},
\ref{Theorem: Ferry's stabilization theorem}.
\end{proof}

\section{Proofs of the main theorems\label{Section: Proofs of main theorems}}

We now provide proofs of the unverified theorems from
\S \ref{Section: Introduction}. Theorems \ref{Theorem 3} and \ref{Theorem 4}
require only an assemby of ingredients from \S 2 and \S 3, so we begin there.

\begin{proof}
[Proof of Theorem \ref{Theorem 3}]Since $\widetilde{K}$ is contractible, both
$X$ and $Y$ are also contractible. By Lemmas
\ref{Lemma: inward tameness of products} and
\ref{Lemma: products that are fg free at infinity}, $X\times Y$ is inward tame
and 1-ended with $\operatorname*{pro}$-$\pi_{1}\left(  X\times Y,r\right)  $
that is pro-finitely generated free, and since $\widetilde{K}\overset
{p}{\simeq}X\times Y$, each of these properties is inherited by $\widetilde
{K}$. Applying Theorems \ref{Theorem: Edwards HCM Theorem},
\ref{Theorem: Bass-Heller-Swan}, and \ref{CS Theorem} in the usual way
provides a $\mathcal{Z}$-com\-pact\-ific\-at\-ion of $\widetilde{K}%
\times\mathcal{Q}$, and since $\dim\widetilde{K}=\dim K<\infty$, Theorem
\ref{Theorem: Ferry's stabilization theorem} provides a $\mathcal{Z}%
$-com\-pact\-ific\-at\-ion of the ER $\widetilde{K}\times\mathbb{I}%
^{2\cdot\dim K+5}$.
\end{proof}

\begin{proof}
[Proof of Theorem \ref{Theorem 4}]By Lemmas
\ref{Lemma: inward tameness of spaces admitting nice Z-actions} and
\ref{Lemma: pro-pi1 of spaces admitting Z-actions}, $X$ is inward tame, and
either: 2-ended and simply connected at each end; or 1-ended with pro-finitely
generated free fundamental group at infinity. By proper homotopy invariance,
the same is true for $\widetilde{K}$, so by the usual argument, $\widetilde
{K}\times\mathcal{Q}$ is $\mathcal{Z}$-com\-pact\-ifi\-able. Another
application of Theorem \ref{Theorem: Ferry's stabilization theorem} provides a
$\mathcal{Z}$-com\-pact\-ific\-at\-ion of $\widetilde{K}\times\mathbb{I}%
^{2\cdot\dim K+5}$.
\end{proof}

\begin{remark}
\emph{In the special case, where }$X$\emph{ (or }$\widetilde{K}$\emph{) admits
a cocompact }$%
\mathbb{Z}
$\emph{-action, the above argument is overkill. There, since }$X$\emph{ is
contractible, }$\left\langle j\right\rangle \backslash X$\emph{ is homotopy
equivalent to a circle; and since }$\left\langle j\right\rangle \backslash
X$\emph{ is compact, a homotopy equivalence }$f:\left\langle j\right\rangle
\backslash X\rightarrow S^{1}$\emph{ lifts to a proper homotopy equivalence
}$X\overset{p}{\simeq}%
\mathbb{R}
$\emph{. It is then straightforward to show that the 2-point
com\-pact\-ifi\-ca\-tions of }$X$\emph{ and }$\widetilde{K}$\emph{ are
themselves }$\mathcal{Z}$\emph{-com\-pact\-ifi\-ca\-tions.}
\end{remark}

To obtain the full strength of Theorem \ref{Theorem 5}, we require a new
ingredient from \cite{GG}.

\begin{proof}
[Proof of Theorem \ref{Theorem 5}]Since $G$ is type $F$, each nontrivial
element has infinite order; so we may apply \cite[Th.1.4]{GG} to conclude that
$G$ is either simply connected at infinity or $G$ is virtually a surface
group. In other words, if $K$ is a finite K($G,1$) complex, then
$\widetilde{K}$ is either simply connected at infinity, or $\widetilde{K}$ is
the universal cover of the corresponding finite K($H,1$) complex
$H\backslash\widetilde{K}$, where $H$ is a finite index subgroup of $G$ and
$H\cong\pi_{1}\left(  S\right)  $, where $S$ is a closed surface with infinite
fundamental group. (\textbf{Note.} By \cite{CJ} or \cite{Ga} a torsion-free
virtual surface group is, in fact, a surface group; but that fact is not
needed here.)

In the case where $\widetilde{K}$ is simply connected at infinity, we may
apply Theorem \ref{Theorem: 1-conn at infinity Z-compactifiablility theorem}
to conclude that $\widetilde{K}\times\mathcal{Q}$ is $\mathcal{Z}%
$-com\-pact\-ifi\-able, and hence $\widetilde{K}\times\mathbb{I}^{2\cdot\dim
K+5}$, admits the desired $\mathcal{Z}$-com\-pact\-ific\-at\-ion.

In the second case, we may conclude that $\widetilde{K}\overset{p}{\simeq
}\widetilde{S}\approx\mathbb{%
\mathbb{R}
}^{2}$. It follows that $\widetilde{K}$ is 1-ended and inward tame, with
$\operatorname*{pro}$-$\pi_{1}\left(  \varepsilon\left(  \widetilde{K}\right)
\right)  $ stably isomorphic to $%
\mathbb{Z}
$. By Theorem \ref{Theorem: Bass-Heller-Swan}, $\widetilde{K}\times
\mathcal{Q}$ satisfies the hypotheses of Theorem \ref{CS Theorem}, and is
therefore $\mathcal{Z}$-com\-pact\-ifi\-able. Another application of Theorem
\ref{Theorem: Ferry's stabilization theorem} completes the proof.\medskip
\end{proof}

Theorems \ref{Theorem 2} is a special case of Theorem \ref{Theorem 4}, so a
proof of Theorem \ref{Theorem 1} is all that remains. It is a consequence of
Theorem \ref{Theorem 3} and the following crucial lemma.

\begin{lemma}
\label{Lemma: properties of classifying spaces for group extensions}Let
$1\rightarrow N\rightarrow G\rightarrow Q\rightarrow1$ be a short exact
sequence of groups where both $N$ and $Q$ have type F, then $G$ also has type
F. Moreover, if $Y$ and $Z$ are finite classifying spaces for $N$ and $Q$,
respectively, then $G$ admits a finite classifying space $W^{\prime}$ with the
property that $\widetilde{W}^{\prime}$ is proper homotopy equivalent to
$\widetilde{Y}\times\widetilde{Z}$.
\end{lemma}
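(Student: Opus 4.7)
The plan is to realize $W'$ as the total space of a fiber bundle over $Z$ with fiber (a finite model of) $Y$ that encodes the extension, and then to identify $\widetilde{W'}$ with $\widetilde{Y}\times\widetilde{Z}$ by trivializing this bundle over the simply connected base $\widetilde{Z}$.

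First I would construct $W'$. The extension furnishes an outer action $\phi\colon Q\to\operatorname{Out}(N)$, equivalently (since $Y$ is a $K(N,1)$) a homomorphism from $Q$ into $\pi_{0}(\operatorname{hAut}(Y))$, the group of free homotopy classes of self-equivalences of $Y$. After a standard rectification---e.g.\ via the simplicial bar construction, or by building a bundle cell-by-cell over a CW decomposition of $Z$ with clutching maps supplied by chosen representatives $f_{q}$ of $\phi(q)$ and higher-dimensional gluing data chosen to represent the extension class in $H^{2}(Q;Z(N))$---one obtains a finite CW complex $Y^{\dagger}\simeq Y$ carrying an honest cellular $Q$-action realizing $\phi$. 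I would then define $W'$ to be the Borel construction $\widetilde{Z}\times_{Q}Y^{\dagger}$, the quotient of $\widetilde{Z}\times Y^{\dagger}$ by the diagonal $Q$-action (deck transformations on $\widetilde{Z}$, the rectified action on $Y^{\dagger}$). Freeness of $Q$ on the $\widetilde{Z}$ factor together with finiteness of $Z$ and $Y^{\dagger}$ make $W'$ a finite CW complex and a fiber bundle $Y^{\dagger}\to W'\to Z$; the long exact sequence of this fibration, combined with asphericity of $Y^{\dagger}$ and $Z$, yields $\pi_{i}(W')=0$ for $i\geq 2$ and $\pi_{1}(W')\cong G$ (with the prescribed extension, by choice of the rectifying cocycle). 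Hence $W'$ is a finite $K(G,1)$ and $G$ has type F.

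For the proper homotopy identification of $\widetilde{W'}$, observe that the covering projection $\widetilde{Z}\times Y^{\dagger}\to W'$ used to define $W'$ has deck group $Q=G/N$ and is therefore precisely the regular $N$-cover of $W'$ corresponding to $N\trianglelefteq G$. Because $\widetilde{Z}$ is simply connected, the universal cover of $\widetilde{Z}\times Y^{\dagger}$ is $\widetilde{Z}\times\widetilde{Y^{\dagger}}$, and so $\widetilde{W'}=\widetilde{Z}\times\widetilde{Y^{\dagger}}$. The homotopy equivalence $Y^{\dagger}\simeq Y$ lifts to an $N$-equivariant homotopy equivalence of universal covers $\widetilde{Y^{\dagger}}\simeq\widetilde{Y}$; since both sides admit free cocompact $N$-actions, this equivalence is automatically proper. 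Taking the product with $\widetilde{Z}$ yields the desired proper homotopy equivalence $\widetilde{W'}=\widetilde{Z}\times\widetilde{Y^{\dagger}}\overset{p}{\simeq}\widetilde{Z}\times\widetilde{Y}$.

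The essential obstacle is the rectification step: converting the homotopy action $\phi$ of $Q$ on $Y$ into a genuine cellular $Q$-action on a finite model $Y^{\dagger}\simeq Y$, while simultaneously arranging the resulting bundle to represent the prescribed extension class in $H^{2}(Q;Z(N))$. This is a classical but technical point in the interplay between group extensions and classifying-space fibrations; once it is dispatched, the identification of the universal cover reduces to the standard principle that a $K(\pi,1)$-bundle over a contractible base is trivial.
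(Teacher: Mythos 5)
Your overall architecture parallels the paper's (a Borel-type construction fibered over $Z$, passage to the $N$-cover over $\widetilde{Z}$, identification with a product, then a lift to universal covers), and the final steps---that an $N$-equivariant homotopy equivalence of cocompact free $N$-spaces is automatically proper, and that properness is preserved under product with $\widetilde{Z}$---are fine. But the argument hinges entirely on the rectification step you flag at the end: replacing the homotopy action of $Q$ on $Y$ by a genuine cellular $Q$-action on a \emph{finite} complex $Y^{\dagger}\simeq Y$, so that $W'=\widetilde{Z}\times_{Q}Y^{\dagger}$ is an honest fiber bundle and its $N$-cover is literally the product $\widetilde{Z}\times Y^{\dagger}$. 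That step is not a ``classical but technical point''; it is unjustified and in general false, and it is precisely the difficulty the paper's proof is designed to circumvent. The standard rectification devices (Cooke's obstruction theory, the bar construction, pulling back a universal fibration over $B\operatorname{hAut}(Y)$) produce a genuine action only on an \emph{infinite} complex in the homotopy type of $Y$, and there is no general mechanism for compressing such a model back to a finite one while retaining the action. Already in the simplest case $Q=\mathbb{Z}$ your claim asserts that every automorphism $\varphi$ of $N$ is induced by a self-\emph{homeomorphism} of some finite $K(N,1)$. For $N$ a finitely generated free group and the model a finite graph this fails: a homeomorphism of a finite graph induces a finite-order element of $\operatorname{Out}(F_{n})$, so no infinite-order outer automorphism arises this way, and you offer no construction of a better finite model. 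What one can always produce is a homotopy equivalence $f:Y\rightarrow Y$ inducing $\varphi$; then $\operatorname{Tor}_{f}(Y)$ is a finite $K(N\rtimes_{\varphi}\mathbb{Z},1)$, but its infinite cyclic cover is the mapping telescope $\operatorname{Tel}_{f}(Y)$, which is only \emph{properly homotopy equivalent} to $Y\times\mathbb{R}$, not homeomorphic to it---and even that statement requires the delicate construction of Lemma \ref{Lemma: Universal cover of a mapping torus} in the appendix.

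The paper therefore proceeds differently. The Rebuilding Lemma yields only a ``stack of CW complexes'' $q':W'\rightarrow Z$: each point preimage is a copy of $Y$ and $q'$ is a product over each open cell, but there is no bundle structure and no group acting on the fiber. The substance of the lemma is then (i) that the induced stack $\widehat{q}:\widehat{W}'\rightarrow\widetilde{Z}$ over the contractible base is an \emph{approximate fibration} (the fiber inclusions are homotopy equivalences because everything in sight is aspherical), and (ii) Theorem \ref{Theorem: approx. fibrations over contractible base}: an approximate fibration with ANR fibers over a contractible base has total space \emph{properly} homotopy equivalent to $F_{b}\times B$. Your closing appeal to ``the standard principle that a $K(\pi,1)$-bundle over a contractible base is trivial'' invokes exactly the structure that is unavailable here, because one never has a bundle. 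As written, the proposal assumes the hard part of the lemma.
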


The first sentence of this lemma follows from techniques laid out in \S 6.1,
\S 7.1, and \S 7.2 of \cite{Ge2}; the second sentence is essentially a
restatement of Proposition 17.3.1 of \cite{Ge2}. Due to their importance in
this paper, we provide a guide to those arguments in the following outline.
Since Proposition 17.3.1 in \cite{Ge2} is light on details (and since we had
worked out an alternative approach prior to discovering that proposition), we
have included an appendix with an alternative proof. A novel aspect of the
approach presented there is its use of \textquotedblleft approximate
fibrations\textquotedblright.

\begin{proof}
[Proof of Lemma
\ref{Lemma: properties of classifying spaces for group extensions}
(outline)]The construction of a finite K($G,1$) complex is obtained by an
application of the Borel construction followed by the Rebuilding Lemma (see
\cite[\S 6.1]{Ge2}). For the Borel construction, begin with a (not necessarily
finite) K($G,1$) complex $X$ and let $G$ act diagonally on $\widetilde
{X}\times\widetilde{Z}$ , where the (nonfree) action of $G$ on $\widetilde{Z}$
is induced by the quotient map $G\rightarrow Q$. Since the diagonal action
itself is free, the quotient $W=G\backslash\left(  \widetilde{X}%
\times\widetilde{Z}\right)  $ is a K($G,1$) complex---probably not finite.
Inspection of this quotient space reveals a natural projection map
$q:W\rightarrow Z$ that is a fiber bundle with fiber the aspherical CW complex
$N\backslash\widetilde{X}$.

Next is the rebuilding\ stage of the argument. Here the K($G,1$) complex $W$
is \textquotedblleft rebuilt\textquotedblright\ by replacing each fiber
$N\backslash\widetilde{X}$ of the map $q:W\rightarrow Z$ with the homotopy
equivalent (but finite) complex $Y$. This is done inductively over the skeleta
of $Z$: first a copy of $Y$ is placed over each vertex of $Z$, then over each
edge $e$ of $Z$ a copy of $Y\times\left[  0,1\right]  $ is attached with
$Y\times0$ being glued to the copy of $Y$ lying over the initial vertex of $e$
and $Y\times1$ glued to the copy of $Y$ lying over the terminal vertex of $e$.
From there we move to the 2-cells of $Z$, and so on. At each step, the bundle
map $q$ provides instructions for the gluing maps. At the end we have a
bundle-like \textquotedblleft stack\textquotedblright\ of CW complexes
$q^{\prime}:W^{\prime}\rightarrow Z$ with each point preimage a copy of $Y$
and a homotopy equivalence $k:W^{\prime}\rightarrow W$. Since both $Z$ and $Y$
are finite complexes, $W^{\prime}$ is a finite complex, so $G$ has type F.

Obtaining a proper homotopy equivalence $h:\widetilde{W}^{\prime}%
\rightarrow\widetilde{Y}\times\widetilde{Z}$ is an interesting and delicate
task. A proof can be found in the appendix; otherwise, the reader is referred
to \cite[Prop.17.3.1]{Ge2}.
\end{proof}

\appendix

\section{An alternative approach to Lemma
\ref{Lemma: properties of classifying spaces for group extensions}}

In this appendix we take a closer look at the proper homotopy equivalence
promised in Lemma
\ref{Lemma: properties of classifying spaces for group extensions} and offer
an alternative to the proof suggested in \cite{Ge2}. Begin with a short exact
sequence of groups $1\rightarrow N\rightarrow G\rightarrow Q\rightarrow1$
where both $N$ and $Q$ have type F. Then, as described in the sketched proof
of Lemma \ref{Lemma: properties of classifying spaces for group extensions},
$G$ also has type F. More specifically, if $Y$ is a finite K($N,1$) complex
and $Z$ is a finite K($Q,1$) complex, then there is a finite K($G,1$) complex
$W^{\prime}$, obtained by an application of the Borel construction followed by
the Rebuilding Lemma. As a corollary of the construction, $W^{\prime}$ comes
equipped with a map $q:W^{\prime}\rightarrow Z$ for which each point preimage
is a copy of $Y$. In fact, for each open $k$-cell $\mathring{e}^{k}$ of $Z$,
$q^{-1}\left(  \mathring{e}^{k}\right)  \approx\mathring{e}^{k}\times Y$.

Although $q:W^{\prime}\rightarrow Z$ is not necessarily a fiber bundle, it
exhibits many properties of a fiber bundle; it is a stack of CW complexes over
$Z$ with fiber $Y$. If we let $\widehat{W}^{\prime}$ be the intermediate cover
of $W^{\prime}$ corresponding to $N\trianglelefteq G$, we get another stack of
CW complexes $\widehat{q}:\widehat{W}^{\prime}\rightarrow\widetilde{Z}$ over
the contractible space $\widetilde{Z}$. Given the standard fact that a fiber
bundle over a contractible space is always a product bundle, it is reasonable
to hope that, in the case at hand, $\widehat{W}^{\prime}$ is \textquotedblleft
approximately a product\textquotedblright. By using the aptly named theory of
\textquotedblleft approximate fibrations\textquotedblright, we will eventually
arrive at the following main result of this appendix.

\begin{proposition}
\label{Prop: Main result of Appendix}Given the above setup, $\widehat
{W}^{\prime}$ is proper homotopy equivalent to $Y\times\widetilde{Z}$.
\end{proposition}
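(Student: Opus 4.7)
The plan is to use the theory of approximate fibrations to show first that the map $\widehat{q}:\widehat{W}'\to\widetilde{Z}$ is an approximate fibration, and then to exploit the contractibility of $\widetilde{Z}$ to trivialize this approximate fibration up to proper homotopy equivalence.

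By construction, $q:W'\to Z$ restricts to a genuine product projection $\mathring{e}^{k}\times Y\to\mathring{e}^{k}$ over the interior of each open cell $\mathring{e}^{k}$ of $Z$, and the attaching data across closures of cells is given by homotopy equivalences between copies of the finite ANR $Y$. Using a cellular gluing theorem for approximate fibrations (of the sort developed by Coram--Duvall and Hughes), these local products can be assembled into an approximate fibration $q$. Since the approximate homotopy lifting property is preserved under pullback along a covering projection in the base, it follows that $\widehat{q}:\widehat{W}'\to\widetilde{Z}$ is also an approximate fibration whose fibers are homotopy equivalent to $Y$.

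Next, I would apply the classical trivialization principle: an approximate fibration over a contractible, locally finite CW base, with fiber of finite homotopy type, is proper fiber homotopy equivalent to the product. Concretely, pick a base vertex $v_{0}\in\widetilde{Z}$ and identify $\widehat{q}^{-1}(v_{0})$ with $Y$. Using the CW structure on $\widetilde{Z}$, one inductively constructs a proper deformation of $\widetilde{Z}$ onto $v_{0}$; the approximate homotopy lifting property of $\widehat{q}$ then produces a compatible fiber-preserving map $h:\widehat{W}'\to Y\times\widetilde{Z}$. A parallel construction yields a candidate inverse, and both composites are shown to be properly homotopic to the identity by standard approximate-fibration arguments, giving the desired proper homotopy equivalence $\widehat{W}'\overset{p}{\simeq} Y\times\widetilde{Z}$.

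The main obstacle will be verifying the global approximate fibration property, and then securing properness of the trivialization. The former requires careful control of the gluing across faces so that approximate lifts assembled over different cells match with arbitrarily small error; the latter demands that the inductively constructed trivialization respect compactness, which in turn follows from local finiteness of the CW structure on $\widetilde{Z}$ together with the uniform control inherent in the approximate fibration framework. Once these technicalities are in hand, the rest of the argument is largely formal.
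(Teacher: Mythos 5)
Your high-level strategy---show $\widehat{q}:\widehat{W}'\to\widetilde{Z}$ is an approximate fibration, then trivialize over the contractible base---is exactly the paper's. But both of your key steps are delegated to unnamed general theorems, and in each case you miss the specific ingredient that makes the step work. For the approximate-fibration property, the paper does not attempt a cell-by-cell assembly of approximate lifts "with arbitrarily small error"; it invokes the Coram--Duvall \emph{recognition criterion} (nearby fibers map to $F_b$ by homotopy equivalences under a local retraction) and verifies it globally in one stroke using asphericity: since $Y$ and $\widehat{W}'$ are both aspherical and each fiber inclusion induces a $\pi_1$-isomorphism, every fiber inclusion $Y\hookrightarrow\widehat{W}'$ is a homotopy equivalence, so any retraction onto a fiber restricts to homotopy equivalences between fibers. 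Your proposal never uses asphericity, which is the actual reason the fibers "line up"; a gluing theorem strong enough to replace it would itself need the hypothesis that the fiber-transport maps across cell boundaries are homotopy equivalences, and you do not identify or verify that.

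The more serious gap is in the trivialization. You propose to lift a "proper deformation of $\widetilde{Z}$ onto $v_0$," but no such thing exists: a contraction of a noncompact space to a point is never proper (the time-one map has compact, hence non-cocompact, image), so the lifted homotopy pushes all of $\widehat{W}'$ into a neighborhood of one fiber and destroys properness. There is also no off-the-shelf "classical trivialization principle" in the proper category to cite; the paper has to prove one (its Theorem A.4). Its argument sidesteps the contraction entirely: the candidate equivalence is the map $r_b\times\widehat{q}:\widehat{W}'\to F_b\times\widetilde{Z}$, where $r_b$ is a homotopy-inverse retraction onto a fiber. This map is proper for free because $\widehat{q}$ is, it is a homotopy equivalence by contractibility of $\widetilde{Z}$, and it is upgraded to a \emph{proper} homotopy equivalence by restricting to the preimages $\widehat{q}^{-1}(V_i)$ of a cofinal sequence of neighborhoods of infinity in $\widetilde{Z}$, using the exact sequence of the restricted approximate fibration to see each restriction is a homotopy equivalence, and then applying the proper Whitehead theorem. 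You would need to replace your lifting construction with something of this kind for the proof to go through.
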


This result is stronger than needed to complete Lemma
\ref{Lemma: properties of classifying spaces for group extensions}, and may be
of interest in its own right. Lemma
\ref{Lemma: properties of classifying spaces for group extensions} is obtained
from Proposition \ref{Prop: Main result of Appendix} by lifting the promised
proper homotopy equivalence to the universal covers. It is worth noting that
$W^{\prime}$, itself, is typically not homotopy equivalent to $Y\times Z$.

In this appendix, we first provide a constructive\ proof of the special case
where $Q$ is infinite cyclic; in that case $G$ is a semidirect product $G\cong
N\rtimes_{\varphi}%
\mathbb{Z}
$. The special case motivates the work to be done later and also illustrates
the subtleties that are overcome with the general theory. After completing the
special case, we will provide a brief overview of the theory of
\textquotedblleft approximate fibrations\textquotedblright. Then we employ
that theory to prove Proposition \ref{Prop: Main result of Appendix} in full generality.

\subsection{Mapping tori of self-homotopy
equivalences\label{Subsection: Mapping tori of self-homotopy equivalences}}

In this section we focus on the special case of Proposition
\ref{Prop: Main result of Appendix}, where $G$ is an extension of the form
$1\rightarrow N\rightarrow G\rightarrow%
\mathbb{Z}
\rightarrow1$; equivalently, $G\cong N\rtimes_{\varphi}%
\mathbb{Z}
$ for some automorphism $\varphi:G\rightarrow G$. In this case, the
Borel/Rebuilding procedure yields a finite K($G$,1) complex that is the
mapping torus of a map $f:Y\rightarrow Y$, with $f_{\#}=\varphi$. Since $Y$ is
a K($H$,1), $f$ is necessarily a homotopy equivalence. The goal of this
section then becomes:

\begin{lemma}
\label{Lemma: Universal cover of a mapping torus}If $K$ is a compact connected
ANR and $f:K\rightarrow K$ is homotopy equivalence, then the canonical
infinite cyclic cover, $\operatorname*{Tel}\nolimits_{f}\left(  K\right)  $,
of $\operatorname*{Tor}\nolimits_{f}\left(  K\right)  $ is proper homotopy
equivalent to $K\times%
\mathbb{R}
$.
\end{lemma}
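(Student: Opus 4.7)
The plan is to construct an explicit proper homotopy equivalence $\Phi:K\times\mathbb{R}\to\operatorname{Tel}_f(K)$ slab by slab, with the image of each slab $K\times[n,n+1]$ landing inside the matching mapping cylinder $\mathcal{M}_{[n,n+1]}(f)\subset\operatorname{Tel}_f(K)$. First I fix a homotopy inverse $g:K\to K$ of $f$ together with chosen homotopies $G:fg\simeq\operatorname{id}_K$ and $G':gf\simeq\operatorname{id}_K$, and for each $n\in\mathbb{Z}$ I designate a reference embedding $\alpha_n:K\to K_n\subset\operatorname{Tel}_f(K)$ by $\alpha_0=\operatorname{id}_K$, $\alpha_n=g^n$ for $n\ge 1$, and $\alpha_n=f^{-n}$ for $n\le -1$. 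With these choices the key identity $\alpha_n=f\circ\alpha_{n+1}$ holds on the nose for $n\le -1$, but only up to the homotopy $G$ applied to $g^n$ for $n\ge 0$.

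On each slab the map $\Phi$ is an interpolating trajectory from $\alpha_{n+1}$ at $t=n+1$ down to $\alpha_n$ at $t=n$, realised inside $\mathcal{M}_{[n,n+1]}(f)$. For $n\le -1$ this trajectory is the straight cylinder slide $(x,t)\mapsto(\alpha_{n+1}(x),t)$, which terminates at $f\circ\alpha_{n+1}(x)=\alpha_n(x)\in K_n$. For $n\ge 0$ the cylinder slide reaches only $fg\cdot g^n(x)\in K_n$, so $\Phi$ first slides $\alpha_{n+1}(x)$ through the cylinder over the parameter sub-interval $[n+\tfrac12,n+1]$, then moves along $K_n$ via $G$ over $[n,n+\tfrac12]$ to reach $g^n(x)=\alpha_n(x)$. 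The resulting $\Phi$ is no longer strictly level-preserving on the positive side, but the preimage of every compact subset of $\operatorname{Tel}_f(K)$ meets only finitely many slabs, so $\Phi$ is proper.

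To promote $\Phi$ to a proper homotopy equivalence I construct a dual map $\Psi:\operatorname{Tel}_f(K)\to K\times\mathbb{R}$ that pushes a point $y$ at level $t$ to $(\pi(y),t)$, where $\pi(y)\in K$ is obtained by sliding $y$ to level $0$---downward, applying $f$ at each level crossing, via the standard mapping cylinder deformation retractions when $t\ge 0$, and upward, applying $g$, via the proper deformation retractions onto the domain ends guaranteed by Lemma~\ref{Lemma: mapping cylinders of homotopy equivalences} when $t\le 0$. On each slab the composites $\Phi\circ\Psi$ and $\Psi\circ\Phi$ reduce to finite iterates of $fg$ or $gf$ (with insertions of $G$, $G'$), each of which is properly homotopic to the identity by finitely many applications of the chosen homotopies; and because $K$ is compact, the assembled homotopies control the real coordinate well enough to have compact preimages of compacta. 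The main obstacle I anticipate is the continuity check at each integer level: a point of $K_n$ carries two different cylinder-coordinate descriptions that differ by $f$---one as the range end of $\mathcal{M}_{[n,n+1]}(f)$ and one as the domain end of $\mathcal{M}_{[n-1,n]}(f)$---and it is this mismatch that forces the asymmetric treatment of positive and negative slabs described above. Once the bookkeeping is set up so that both sides glue continuously at every $K_n$, the remaining verifications become routine manipulations of finite compositions of continuous maps between compact ANRs.
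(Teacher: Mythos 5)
Your map $\Phi$ is, up to reparametrization, exactly the map $u$ constructed in the paper's proof: the straight cylinder slide over $f^{-n}$ on negative slabs, and the cylinder coordinate $g^{n}(x)$ corrected by the chosen homotopy $fg\simeq\operatorname*{id}_{K}$ on nonnegative slabs, arranged so that the slabs glue along each $K_{n}$. That half of your argument is sound, and your diagnosis of the integer-level mismatch (the two cylinder-coordinate descriptions of a point of $K_{n}$ differing by $f$) is the right one.

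The gap is in the final step. You assert that $\Psi\circ\Phi$ and $\Phi\circ\Psi$ are properly homotopic to the identities because on each slab the composite reduces to a finite iterate of $fg$ or $gf$, null-homotopic by finitely many applications of $G$ and $G'$. But the number of insertions of $G$ needed at level $n$ grows linearly with $\left\vert n\right\vert$: on the slab $\left[n,n+1\right]$ with $n\geq0$ the composite $\Psi\Phi$ is essentially $f^{n+1}g^{n+1}$, and the null-homotopy you would use there is a genuinely different map from the one used on the adjacent slab $\left[n-1,n\right]$. For the glued homotopy to be continuous, the two slab-wise homotopies must agree along the common level $K\times\left\{n\right\}$ for \emph{every} value of the homotopy parameter, not merely at its endpoints; constructing such a coherent family of null-homotopies across infinitely many slabs, and then verifying that the assembled homotopy is proper, is the actual content of the lemma and is precisely the step your proposal dismisses as routine. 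The paper deliberately sidesteps this: by Lemma \ref{Lemma: mapping cylinders of homotopy equivalences} it suffices to produce a proper strong deformation retraction of $\mathcal{M}_{\left[0,1\right]}\left(u\right)$ onto $K\times\mathbb{R}$, and that retraction is assembled from local strong deformation retractions of the pieces $\mathcal{M}_{\left[0,1\right]}\left(u_{n}\right)$ onto $C_{n}\cup\left(K\times\left[n,n+1\right]\right)_{1}\cup C_{n+1}$, which glue automatically because each fixes the overlap sets $C_{n}$ and $C_{n+1}$. To complete your two-sided approach you would need an explicit inductive or telescoping construction of the coherent homotopies; otherwise, replace the second half of your argument with the mapping-cylinder criterion.
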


\begin{proof}
Let $g:K\rightarrow K$ be a homotopy inverse for $f$ and $B:K\times\left[
0,1\right]  \rightarrow K$ with $B_{0}=\operatorname*{id}_{K}$ and $B_{1}=fg$.
In accordance with Lemma
\ref{Lemma: mapping cylinders of homotopy equivalences}, our goal is to define
a map $u:K\times%
\mathbb{R}
\rightarrow\operatorname*{Tel}\nolimits_{f}\left(  K\right)  $, such that
there is a proper strong deformation retraction of $\mathcal{M}_{\left[
0,1\right]  }\left(  u\right)  $ onto the domain copy of $K\times%
\mathbb{R}
$. For each integer $n$, define a function $u_{n}:K\times\left[  n,n+1\right]
\rightarrow\mathcal{M}_{\left[  n,n+1\right]  }\left(  f\right)  $ by the
rule:\smallskip%
\[
u_{n}\left(  x,r\right)  =q_{\left[  n,n+1\right]  }(B_{r-n}\left(
g^{n}\left(  x\right)  \right)  ,r)\text{, when }n\geq0
\]
and%
\[
u_{n}\left(  x,r\right)  =q_{\left[  n,n+1\right]  }\left(  f^{-n}\left(
x),r\right)  \right)  \text{, when }n<0\text{.\smallskip}%
\]
Here it is understood that $g^{0}=\operatorname*{id}_{K}$.

Note that
\[
u_{-1}\left(  x,0\right)  \allowbreak=\allowbreak q_{[-1,0]}\left(  f\left(
x),0\right)  \right)  \text{, while }u_{0}\left(  x,0\right)  \allowbreak
=\allowbreak q_{\left[  0,1\right]  }\left(  B_{0}(x),0\right)  \allowbreak
=\allowbreak q_{\left[  0,1\right]  }\left(  x,0\right)
\]
and for each integer $n\geq1$,
\[
u_{n-1}\left(  x,n\right)  \allowbreak=\allowbreak q_{[n-1,n]}\left(
B_{1}\left(  g^{n-1}\left(  x\right)  \right)  \right)  \allowbreak
=\allowbreak q_{[n-1,n]}\left(  fgg^{n-1}\left(  x\right)  ,n\right)
\allowbreak=\allowbreak q_{[n-1,n]}\left(  fg^{n}\left(  x\right)  ,n\right)
\]
and
\[
u_{n}\left(  x,n\right)  \allowbreak=\allowbreak q_{[n,n+1]}\left(
B_{0}\left(  g^{n}\left(  x\right)  \right)  ,n\right)  \allowbreak
=\allowbreak q_{[n,n+1]}\left(  g^{n}\left(  x\right)  ,n\right)  .
\]
Similarly, for each for each integer $n\leq-1$,
\[
u_{n-1}\left(  x,n\right)  \allowbreak=\allowbreak q_{[n-1,n]}\left(
f^{-(n-1)}\left(  x),n\right)  \right)  \allowbreak=\allowbreak q_{[n-1,n]}%
\left(  f(f^{-n}\left(  x),n\right)  \right)
\]
and
\[
u_{n}\left(  x,n\right)  \allowbreak=\allowbreak q_{[n,n+1]}\left(
f^{-n}\left(  x),n\right)  \right)  \text{.}%
\]
It follows that the $u_{n}$ can be glued together to obtain a proper map
$u:K\times%
\mathbb{R}
\rightarrow\operatorname*{Tel}\nolimits_{f}\left(  K\right)  $. See Figure
\ref{Fig 1}.\medskip%
\begin{figure}
[ptb]
\begin{center}
\includegraphics[
height=1.74in,
width=5.5486in
]%
{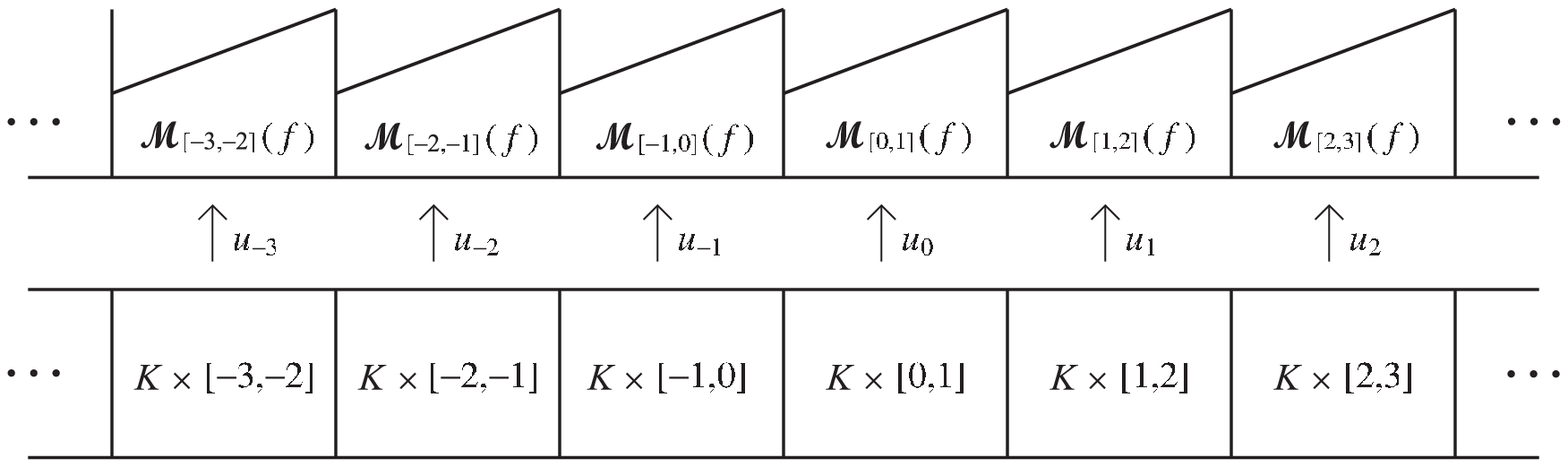}%
\caption{The map $u:K\times\mathbb{R} \rightarrow\operatorname{Tel}_{f}\left(
K\right)  $.}%
\label{Fig 1}%
\end{center}
\end{figure}
\smallskip

\noindent\textsc{Claim.}\textbf{ }\emph{There is a proper strong deformation
retraction of} $\mathcal{M}_{\left[  0,1\right]  }\left(  u\right)  $
\emph{onto} $K\times%
\mathbb{R}
$.\medskip

First note that, since $u$ respects $%
\mathbb{R}
$-coordinates, the natural projections $K\times%
\mathbb{R}
\rightarrow%
\mathbb{R}
$ and $p:\operatorname*{Tel}\nolimits_{f}\left(  K\right)  \rightarrow%
\mathbb{R}
$ can be extended to a projection $\widehat{p}:\mathcal{M}_{\left[
0,1\right]  }\left(  u\right)  \rightarrow%
\mathbb{R}
$ with the property that each point preimage $\widehat{p}\,^{-1}\left(
r\right)  $ is a mapping cylinder $C_{r}$ of a map from $K\times\left\{
r\right\}  $ to $K_{r}$. Indeed, for an integer $n\geq0$, $C_{n}$ is the
mapping cylinder of $fg^{n}$ and for an integer $n<0$, $C_{n}$ is the mapping
cylinder of $f^{-\left(  n-1\right)  }$. So each $C_{n}$ is a mapping cylinder
of a homotopy equivalence---a fact that will be useful later. (In fact, each
$C_{r}$ is a mapping cylinder of a homotopy equivalence, but this fact will
only be used for integral values of $r$.) Note also that $\mathcal{M}_{\left[
0,1\right]  }\left(  u\right)  $ may be viewed as a countable union
$\bigcup_{n\in%
\mathbb{Z}
}\mathcal{M}_{[0,1]}\left(  u_{n}\right)  $, where each $\mathcal{M}%
_{[0,1]}\left(  u_{n}\right)  $ intersects $\mathcal{M}_{[0,1]}\left(
u_{n-1}\right)  $ in $C_{n}$.\medskip

\noindent\textsc{Subclaim.}\textbf{ }\emph{For each} $n$, $\mathcal{M}%
_{[0,1]}\left(  u_{n}\right)  $ \emph{strong deformation retracts onto the
subset} \linebreak$C_{n}\cup\left(  K\times\left[  n,n+1\right]  \right)
_{1}\cup C_{n+1}$.\medskip

It suffices to show that $C_{n}\cup\left(  K\times\left[  n,n+1\right]
\right)  _{1}\cup C_{n+1}\hookrightarrow\mathcal{M}\left(  u_{n}\right)  $ is
a homotopy equivalence. Since $C_{n}$ and $C_{n+1}$ are mapping cylinders of
homotopy equivalences, each strong deformation retracts onto its domain end,
so $\left(  K\times\left[  n,n+1\right]  \right)  _{1}\hookrightarrow
C_{n}\cup\left(  K\times\left[  n,n+1\right]  \right)  _{1}\cup C_{n+1}$ is a
homotopy equivalence; therefore, it is enough to show that $\left(
K\times\left[  n,n+1\right]  \right)  _{1}\hookrightarrow\mathcal{M}_{\left[
0,1\right]  }\left(  u_{n}\right)  $ is a homotopy equivalence. Note that the
inclusions $K_{n}\hookrightarrow C_{n}$, $K_{n}\hookrightarrow\mathcal{M}%
_{\left[  n,n+1\right]  }\left(  f\right)  $ and $\mathcal{M}_{\left[
n,n+1\right]  }\left(  f\right)  \hookrightarrow\mathcal{M}_{\left[
0,1\right]  }\left(  u_{n}\right)  $ are all homotopy equivalences, since each
subspace is the range end of a corresponding mapping cylinder. It follows that
$C_{n}\hookrightarrow\mathcal{M}_{\left[  0,1\right]  }\left(  u_{n}\right)  $
is a homotopy equivalence, and since $K\times\left\{  n\right\}
\hookrightarrow C_{n}$ is a homotopy equivalence it follows that
$K\times\left\{  n\right\}  \hookrightarrow\mathcal{M}_{\left[  0,1\right]
}\left(  u_{n}\right)  $, and hence, $\left(  K\times\left[  n,n+1\right]
\right)  _{1}\hookrightarrow\mathcal{M}_{\left[  0,1\right]  }\left(
u_{n}\right)  $ is a homotopy equivalence. The subclaim follows.\smallskip

To complete the claim, first properly strong deformation retract
$\mathcal{M}_{\left[  0,1\right]  }\left(  u\right)  $ onto $(K\times%
\mathbb{R}
)_{1}\cup\left(  \bigcup\nolimits_{n\in%
\mathbb{Z}
}C_{n}\right)  $ using the union of the strong deformation retractions
provided by the subclaim. Follow that by a strong deformation of $(K\times%
\mathbb{R}
)_{1}\cup\left(  \bigcup\nolimits_{n\in%
\mathbb{Z}
}C_{n}\right)  $ onto $\left(  K\times%
\mathbb{R}
\right)  _{1}$ obtained by individually strong deformation retracting each
$C_{n}$ onto its domain end.
\end{proof}

\begin{remark}
\emph{The delicate nature of defining }$u:K\times%
\mathbb{R}
\rightarrow\operatorname*{Tel}\nolimits_{f}\left(  K\right)  $\emph{, in the
above proof, hints at the subtelty of Lemma
\ref{Lemma: properties of classifying spaces for group extensions}.}
\end{remark}

\subsection{Approximate fibrations}

We now review the main definitions and a few fundmental facts from the theory
of approximate fibrations---a theory developed by Coram and Duvall
\cite{CD1},\cite{CD2} to generalize the notions of fibration and fiber bundle.

A proper surjective map $p:E\rightarrow B$ between (locally compact, metric)
ANRs is an \emph{approximate fibration} if it satisfies the following
\emph{approximate lifting property}:\bigskip

\noindent\emph{For every homotopy }$H:X\times\left[  0,1\right]  \rightarrow
B$\emph{, map }$h:X\rightarrow E$\emph{ with }$\pi h=H_{0}$\emph{, and open
cover }$\mathcal{U}$\emph{ of }$B$\emph{, there exists }$\overline{H}%
:X\times\left[  0,1\right]  \rightarrow E$\emph{ such that }$\overline{H}%
_{0}=h$\emph{ and }$p\overline{H}$\emph{ is }$U$\emph{-close to }$H$\emph{
(that is, for each }$\left(  x,t\right)  \in X\times\left[  0,1\right]
$\emph{ there exists }$U\in\mathcal{U}$\emph{ containing both }$H\left(
x,t\right)  $\emph{ and }$p\overline{H}\left(  x,t\right)  $\emph{).\bigskip}

\noindent For each $b\in B$, $F_{b}:=$ $p^{-1}\left(  b\right)  $ is called a
\emph{fiber}. Approximate fibrations allows for fibers with bad local
properties; however, the theory is easier, but still rich, when fibers are
ANRs. Since fibers of the maps considered in this paper are always ANRs (in
fact, finite CW complexes), we will focus on that special case. In this
context, there is a particularly nice criterion for recognizing an approximate fibration.

Suppose $p:E\rightarrow B$ is a proper map between connected ANRs with ANR
fibers. Then, for each fiber $F_{b}$, some neighborhood $U_{b}$ retracts onto
$F_{b}$, and for points $b^{\prime}$ sufficiently close to $b$, this induces a
map of $F_{b^{\prime}}$ to $F_{b}$. By \cite{CD2}, $p:E\rightarrow B$ is an
approximate fibration if and only if each $b\in B$ has a neighborhood over
which each of these induced fiber maps is a homotopy equivalence.

\begin{example}
If $f:K\rightarrow K$ is homotopy equivalence of a compact connected ANR to
itself, the above criterion is easily applied to show that quotient maps
$p_{1}:\mathcal{M}_{\left[  a,b\right]  }\left(  f\right)  \rightarrow\left[
a,b\right]  $, $p_{2}:\operatorname*{Tor}\nolimits_{f}\left(  K\right)
\rightarrow\mathbb{S}^{1}$, and $p_{3}:\operatorname*{Tel}\nolimits_{f}\left(
K\right)  \rightarrow%
\mathbb{R}
$ are all approximate fibrations. In the simple case where $K$ is an arc and
$f$ is a constant map, these projections are not actual fibrations. In the
case where $K$ is a bouquet of circles and $f$ is an arbitrary map inducing a
$\pi_{1}$-isomorphism, the examples are already of group theoretic interest.
\end{example}

As with the case of ordinary fibrations, approximate fibrations give rise to
homotopy long exact sequences \cite{CD1}. When no restrictions are placed on
the fibers, these sequence involve the shape (or \v{C}ech) homotopy groups of
the fiber; when the fibers are ANRs that technicality vanishes and we have:

\begin{lemma}
Let $p:E\rightarrow B$ be an approximate fibration between connected ANRs with
connected ANR fibers. Then, for any $b\in B$, there is a long exact sequence
\[
\cdots\rightarrow\pi_{k+1}\left(  B\right)  \rightarrow\pi_{k}\left(
F_{b}\right)  \overset{i_{\#}}{\rightarrow}\pi_{k}\left(  E\right)
\overset{p_{\#}}{\rightarrow}\pi_{k}\left(  B\right)  \rightarrow\pi
_{k-1}\left(  F_{b}\right)  \rightarrow\cdots
\]
where $i$ is the inclusion map.
\end{lemma}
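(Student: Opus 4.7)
The plan is to imitate the classical construction of the long exact sequence of a Hurewicz fibration, substituting the approximate homotopy lifting property for strict homotopy lifting and using that the fiber $F_b$ is an ANR (and hence admits a neighborhood retraction) to absorb the resulting approximation errors. Concretely, I would start from the purely topological long exact sequence of the pair $(E, F_b)$,
\[
\cdots\rightarrow\pi_{k}(F_{b})\rightarrow\pi_{k}(E)\rightarrow\pi_{k}(E,F_{b})\overset{\partial}{\rightarrow}\pi_{k-1}(F_{b})\rightarrow\cdots,
\]
and show that $p_{\#}\colon\pi_{k}(E,F_{b})\rightarrow\pi_{k}(B,b)$ is an isomorphism for each $k\geq1$. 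Splicing this identification into the sequence yields the claim, with connecting homomorphism defined as $\partial\circ p_{\#}^{-1}$.

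For surjectivity of $p_{\#}$, I would represent a class in $\pi_{k}(B,b)$ by a map $\alpha\colon(I^{k},\partial I^{k})\rightarrow(B,b)$, let $h\colon\partial I^{k}\rightarrow E$ be the constant map at a chosen basepoint $e_{0}\in F_{b}$, and apply the approximate lifting property with an open cover $\mathcal{U}$ of $B$ chosen fine enough that $p^{-1}$ of an $\mathcal{U}$-neighborhood of $b$ lies inside a neighborhood of $F_{b}$ that retracts onto $F_{b}$. Such a neighborhood exists because $F_{b}$ is an ANR and, by the recognition criterion from \cite{CD2} stated just above, nearby fibers include into $p^{-1}(U)$ by homotopy equivalences. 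Composing the approximate lift $\bar\alpha$ with this retraction on $\partial I^{k}$ yields a genuine element of $\pi_{k}(E,F_{b})$, and the $\mathcal{U}$-closeness of $p\bar\alpha$ to $\alpha$ produces a homotopy realizing $p_{\#}[\bar\alpha]=[\alpha]$. For injectivity, the same machinery is applied one dimension up: a null-homotopy in $B$ of a representative's image is lifted approximately rel the initial slice (where we have a strict lift into $(E,F_{b})$), and the terminal slice of the approximate lift is pushed back into $F_{b}$ by the ANR retraction.

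The main obstacle is the approximation bookkeeping. The approximate lifts are not actual lifts into $(E,F_{b})$, so at each stage one must choose the covers $\mathcal{U}$ and the neighborhoods of $F_{b}$ with enough care that all corrections via the ANR retraction are compatible on overlaps of faces and through the parameter direction of the homotopies. In particular, one wants the retractions to be canonical enough that applying them to $\bar\alpha|_{\partial I^{k}}$ commutes, up to homotopy, with the restriction maps used in forming $\pi_{k}(E,F_{b})$. This is precisely the point at which the theory of \cite{CD1} works shape-theoretically to avoid tracking these choices by hand, and then invokes the equality of shape homotopy with ordinary homotopy for ANR fibers to obtain the stated sequence; I would follow that route, citing \cite{CD1} for the technical core and supplying only the ANR fiber reduction explicitly.
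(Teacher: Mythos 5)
The paper offers no proof of this lemma at all: it simply cites Coram--Duvall \cite{CD1} for the long exact sequence (stated there in terms of shape homotopy groups of the fiber) and remarks that for ANR fibers the shape homotopy groups coincide with the ordinary ones. Your proposal, after sketching the classical fibration-style argument and correctly identifying where the approximation bookkeeping becomes delicate, ultimately resolves the matter the same way --- by citing \cite{CD1} for the technical core and supplying the ANR-fiber reduction --- so it is correct and essentially the paper's approach.
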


We now prove a general fact about approximations that is almost tailor-made
for proving Proposition \ref{Prop: Main result of Appendix}.

\begin{theorem}
\label{Theorem: approx. fibrations over contractible base}Let $p:E\rightarrow
B$ be an approximate fibration between connected ANRs with connected ANR
fibers and let $b\in B$. If $B$ is contractible then $E$ is proper homotopy
equivalent to $F_{b}\times B$.
\end{theorem}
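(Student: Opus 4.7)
The first step I would take is to observe that, since $p$ is proper and $\{b\}\subseteq B$ is compact, the fiber $F_b=p^{-1}(b)$ is itself compact. Then I would apply the long exact sequence of the previous lemma: since $B$ is contractible, every $\pi_k(B)$ vanishes, so the inclusion $i:F_b\hookrightarrow E$ induces isomorphisms on all homotopy groups. Because $E$ and $F_b$ are ANRs (hence have the homotopy type of CW complexes by Theorem \ref{Theorem: West's Theorem}), Whitehead's theorem then upgrades $i$ to an ordinary homotopy equivalence. Combined with the contractibility of $B$, this gives $E\simeq F_b\simeq F_b\times B$; the nontrivial content is to promote this to a \emph{proper} equivalence.

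To construct an explicit proper map $\phi:F_b\times B\to E$, I would fix a contraction $H:B\times[0,1]\to B$ with $H_0=c_b$ and $H_1=\operatorname{id}_B$, and consider the homotopy $G:F_b\times B\times[0,1]\to B$ defined by $G(x,b',t)=H(b',t)$. Let $\phi_0:F_b\times B\to E$ send $(x,b')\mapsto x\in F_b\subseteq E$; then $p\phi_0=c_b=G_0$, so the approximate lifting property produces $\widetilde G:F_b\times B\times[0,1]\to E$ with $\widetilde G_0=\phi_0$ and $p\widetilde G$ arbitrarily close to $G$. Setting $\phi:=\widetilde G_1$ gives a map for which $p\phi$ is close to the projection $\pi_B:F_b\times B\to B$. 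Because $F_b$ is compact, $\pi_B$ is proper, and controlling the closeness in a locally finite refinement of any exhaustion of $B$ forces $p\phi$ to be proper; combined with properness of $p$, this forces $\phi$ itself to be proper via the inclusion $\phi^{-1}(C)\subseteq(p\phi)^{-1}(p(C))$ for compact $C\subseteq E$.

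For a proper homotopy inverse, I would run the dual construction: apply the approximate lifting property to $L:E\times[0,1]\to B$, $L(e,t)=H(p(e),1-t)$, with initial lift $\operatorname{id}_E$, so that $L_0=p$ and $L_1=c_b$. The resulting $\widetilde L_1:E\to E$ has $p\widetilde L_1$ close to $c_b$; composing with a retraction of a small neighborhood of $F_b$ onto $F_b$ yields $\psi:E\to F_b$. Then define $\widetilde\psi:E\to F_b\times B$ by $\widetilde\psi(e)=(\psi(e),p(e))$; properness is immediate from properness of $p$ in the second coordinate. It remains to verify $\phi\widetilde\psi\overset{p}{\simeq}\operatorname{id}_E$ and $\widetilde\psi\phi\overset{p}{\simeq}\operatorname{id}_{F_b\times B}$. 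These homotopies are obtained by concatenating the approximate-lift homotopies $\widetilde G$ and $\widetilde L$ and the homotopy of the neighborhood retraction, using the fact that in each case the $B$-coordinate of the track is controlled by $H$ composed with the ambient projection, so projecting to $B$ yields a proper map and therefore each intermediate homotopy is proper.

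The main obstacle is the properness bookkeeping in this last step: an approximate lift only gives control \emph{up to a cover}, so to guarantee that a countable composition of such lifts defines a proper map (and, more importantly, a \emph{proper} homotopy), one must choose the covers in the approximate lifting property to refine a locally finite exhaustion of $B$ at each stage. The decisive mechanism enabling this control is that $p$ is proper and $F_b$ is compact, which reduces properness of any auxiliary map to properness of its composition with $p$, and hence to closeness in the base $B$.
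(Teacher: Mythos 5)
Your map $\widetilde\psi(e)=(\psi(e),p(e))$ is essentially the paper's candidate equivalence $r_b\times p:E\rightarrow F_b\times B$, and your opening reductions (compactness of $F_b$ from properness of $p$, the long exact sequence plus Whitehead to make $F_b\hookrightarrow E$ a homotopy equivalence, properness of $\widetilde\psi$ via the second coordinate) all match the paper. The gap is in how you try to upgrade the equivalence to a \emph{proper} one, and it is not a bookkeeping issue that finer covers can repair. Every homotopy you propose is an approximate lift of a homotopy in $B$ that factors through the contraction $H$, and $H$ ends (or begins) at the constant map $c_b$. Concretely: $\widetilde L$ joins $\operatorname{id}_E$ to $\widetilde L_1$, but $p\widetilde L_1$ is close to $c_b$, so $\widetilde L_1$ maps all of $E$ into the compact set $p^{-1}(\overline V)$ and is therefore not a proper map when $E$ is noncompact; a homotopy one of whose ends is non-proper cannot be a proper homotopy. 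The same defect appears in the track $t\mapsto\widetilde G_t(\psi(e),p(e))$ joining $\psi$ to $\phi\widetilde\psi$: its projection to $B$ is close to $H(p(e),t)$, so every point of $E$, however far out, is dragged into a fixed compact neighborhood of $F_b$ at some time, and the preimage of that neighborhood under the homotopy is all of $E\times\{0\}$ up to perturbation. No choice of locally finite refinement in the approximate lifting property changes this, because the error of the lift is not the problem---the homotopy being lifted is itself non-proper.

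The paper avoids constructing explicit proper homotopies altogether. Having observed that $r_b\times p$ is a proper map and an ordinary homotopy equivalence, it invokes the Proper Whitehead Theorem (\cite[Th.17.1.1]{Ge2} or \cite[Prop.IV]{Si}): it suffices to exhibit cofinal sequences of neighborhoods of infinity $U_i=p^{-1}(V_i)$ in $E$ and $F_b\times V_i$ in $F_b\times B$ between which $r_b\times p$ restricts to homotopy equivalences. That, in turn, is verified by noting that each restriction $p_i:U_i\rightarrow V_i$ is again an approximate fibration, whose long exact sequence (with $b\in V_i$) degenerates to split short exact sequences $1\rightarrow\pi_k(F_b)\rightarrow\pi_k(U_i)\rightarrow\pi_k(V_i)\rightarrow 1$, giving $\pi_k(U_i)\cong\pi_k(F_b)\times\pi_k(V_i)$. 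If you want to salvage your construction, this is the missing ingredient: replace the attempted explicit proper homotopies with a verification of the hypotheses of the Proper Whitehead Theorem for the map you already have.
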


\begin{proof}
By the homotopy long exact sequence and an application of the Whitehead
Theorem, $F_{b}\overset{i}{\hookrightarrow}E$ is a homotopy equivalence. Let
$r_{b}:E\rightarrow F_{b}$ be a homotopy inverse that retracts $E$ onto
$F_{b}$. We will observe that $r_{b}\times p:E\rightarrow F_{b}\times B$ is a
proper homotopy equivalence.

Clearly $r_{b}\times p$ is proper, and by contractibility of $B$, it is a
homotopy equivalence. We will show that it is a proper homotopy equivalence by
exhibiting cofinal sequences of neighborhoods of infinity in the domain and
range, respectively, such that $r_{b}\times p$ restricts to homotopy
equivalences between corresponding entries. An application of the Proper
Whitehead Theorem \cite[Th.17.1.1]{Ge2} or \cite[Prop.IV]{Si} (applied to the
mapping cylinder of $r_{b}\times p$) completes the proof.

Let $\left\{  V_{i}\right\}  _{i=0}^{\infty}$ be a cofinal nested sequence of
neighborhoods of infinity in $B$. For convenience, assume that $B$ is 1-ended
and that each $V_{i}$ is chosen to be connected; we will return to the general
case momentarily. For each $i$, let $U_{i}=p^{-1}\left(  V_{i}\right)  $. Then
$\left\{  U_{i}\right\}  $ and $\left\{  F_{b}\times V_{i}\right\}  $ are
nested cofinal sequence of neighborhoods of infinity in $E$ and $F_{b}\times
B$, respectively. Morevover, $r_{b}\times p$ retricts to a map of $U_{i}$ to
$F_{b}\times V_{i}$, for each $i$.

Note that the restriction $p_{i}:U_{i}\rightarrow V_{i}$ is, itself, an
approximate fibration. By choosing $b$ to lie in $V_{i}$, and recalling that
the composition $F_{b}\hookrightarrow U_{i}\hookrightarrow E$ induces $\pi
_{k}$-isomorphisms, for all $k$, we see that the long exact sequence for
$p_{i}:U_{i}\rightarrow V_{i}$ yields a short exact sequence%
\[
1\rightarrow\pi_{k}\left(  F_{b}\right)  \overset{i_{\#}}{\rightarrow}\pi
_{k}\left(  U_{i}\right)  \overset{p_{i\#}}{\rightarrow}\pi_{k}\left(
V_{i}\right)  \rightarrow1
\]
for each $k$. Since $U_{i}$ retracts onto $F_{b}$, these sequences split; so
$\pi_{k}\left(  U_{i}\right)  \cong\pi_{k}\left(  F_{b}\right)  \times\pi
_{k}\left(  V_{i}\right)  $. From there it is easy to see that each
restriction of $r_{b}\times p$ induces isomorphisms $\pi_{k}\left(
U_{i}\right)  \rightarrow\pi_{k}\left(  F_{b}\times V_{i}\right)  $,
completing \ the proof.

If $B$ has more than one end, one simply applies the above argument to
individual components of the $V_{i}$.
\end{proof}

\begin{proof}
[Proof of Proposition \ref{Prop: Main result of Appendix}]By Theorem
\ref{Theorem: approx. fibrations over contractible base}, it suffices to show
that the stack of CW complexes $\widehat{q}:\widehat{W}^{\prime}%
\rightarrow\widetilde{Z}$ is an approximate fibration. Since each point
preimage is a copy of the finite complex $Y$, we need only check that these
fibers \textquotedblleft line up homotopically\textquotedblright\ in the sense
of the approximate fibration recognition criterion described above.

From the initial Borel/Rebuilding construction, it is clear that the inclusion
of each fiber into $\widehat{W}^{\prime}$ indicies a $\pi_{1}$-isomorphism;
and, since both $Y$ and $\widehat{W}^{\prime}$ are aspherical, these
inclusions are homotopy equivalences. So any retraction of $\widehat
{W}^{\prime}$ onto a fiber $Y$ restricts to homotopy equivalences between
fibers. The recognition criterion follows.
\end{proof}

\end{document}